\documentclass[10pt,a4paper,oneside]{amsart}
\usepackage[normalem]{ulem}
\usepackage[utf8]{inputenc}
\usepackage{turnstile}
\usepackage{amsmath,amssymb,amsthm}
\usepackage{libertinus}
\usepackage{libertinust1math}
\usepackage[shortlabels]{enumitem}
\usepackage[]{geometry}
\usepackage{etoolbox}
\usepackage{amsaddr}
\usepackage[dvipsnames]{xcolor}
\usepackage{centernot}
\usepackage{comment}
\usepackage{bussproofs}
\usepackage{nicefrac}
\usepackage{proof,color}
\usepackage{natbib}
\linespread{1.05}

\usepackage[textwidth=1.5cm]{todonotes}

	\theoremstyle{definition}
		\newtheorem{remark}{Remark}

	\theoremstyle{plain}
        \newtheorem{thm}{Theorem}
	\newtheorem{dfn}[thm]{Definition}
		\newtheorem{lemma}[thm]{Lemma}
		\newtheorem{prop}[thm]{Proposition}
		\newtheorem{corollary}[thm]{Corollary}

		\newtheorem{op}{Open problem}

        \theoremstyle{definition}

        \newcommand{\T}{\mathrm{Tr}}

        \newcommand{\pk}{\mrm{PK}}

        \newcommand*\subdot[1]{\oalign{$#1$\cr\hfil.\hfil}}%
        \newcommand{\mc}[1]{\mathcal{#1}}
        \newcommand{\lnat}{\mathcal{L}_\mathbb{N}}
        \newcommand{\lt}{\mc{L}_\T}
        \newcommand{\mrm}{\mathrm}
        \newcommand{\vphi}{\varphi}
        \newcommand{\beq}{\begin{equation}}
        \newcommand{\eeq}{\end{equation}}
        \newcommand{\vsk}{\vDash_\mrm{sk}}
        \newcommand{\vssk}{\vDash_\mrm{ssk}}
        \newcommand{\ra}{\rightarrow}
        \newcommand{\Ra}{\Rightarrow}
        \newcommand{\pat}{\mrm{PAT}}
        \newcommand{\nat}{\mathbb{N}}
        \newcommand{\sth}{\;|\;}
        \newcommand{\corn}[1]{\ulcorner #1\urcorner}

        \newcommand{\lra}{\leftrightarrow}

        \newcommand{\indt}{\mrm{IT}}

        \newcommand{\ovl}{\overline}
        \newcommand{\val}[1]{#1^\circ}



\title[Supervaluation-Style Truth Revisited]{
Supervaluation-Style Truth Revisited}
\author{Pablo Dopico$^1$ \& Carlo Nicolai$^2$ \& Johannes Stern$^3$}
\address{\small $^1$ Fachbereich Philosophie, Universität Konstanz, pablo.dopico@hotmail.com}
\address{\small $^2$ Department of Philosophy, King's College London, carlo.nicolai@kcl.ac.uk}
\address{\small $^3$Department of Philosophy, University of Bristol, johannes.stern@bristol.ac.uk}
\date{}

\begin{document}

\begin{abstract}
Supervaluational fixed-point semantics for truth cannot be axiomatized because of its recursion-theoretic complexity. Johannes Stern (\emph{Supervaluation-Style Truth Without Supervaluations}, Journal of Philosophical Logic, 2018) proposed a new strategy (supervaluational-style truth) to capture the essential aspects of the supervaluational evaluation schema whilst limiting its recursion-theoretic complexity, hence resulting in ($\nat$-categorical) axiomatizations. Unfortunately, as we show in the paper, this strategy was not fully realized in Stern's original work: in fact, we provide counterexamples to some of Stern's key claims. However,  we also vindicate Stern's project by providing different semantic incarnations of the idea and corresponding $\nat$-categorical axiomatizations. The results provide a deeper picture of the relationships between standard supervaluationism and supervaluational-style truth. 
\end{abstract}

\maketitle

\section{Introduction}\label{sec:intro}
In an ideal world, semantic and proof-theoretic methods go hand in hand: after all, the first-order completeness theorem tells us that r.e.~first-order theories can be characterized one way or the other. Yet, in practice the methods come apart. When working semantically one often focuses on intended models rather than arbitrary models of the language. If we then consider the theories of these intended models, we typically end up with a non r.e.~set of sentences, that is, the set of sentences will not be axiomatizable. A prominent example of this mismatch between semantics and proof-theory arises in theories of truth. If one attempts to characterize the truth predicate semantically, one typically defines the (an) interpretation  and the resulting truth theory is thus non-axiomatizable. 

Nonetheless, axiomatic theories have been proposed as axiomatizations of certain semantic truth theories; for instance, the axiomatic truth theory $\mrm{KF}$ from \cite{feferman_1991} has been proposed as an axiomatization of the Strong Kleene version of Kripke's theory of truth \cite{kripke_1975}. The question then arises: according to which criteria can $\mrm{KF}$ be considered as an axiomatization of Kripke's theory of truth? And, more generally, when can an axiomatic truth theory be considered to axiomatize a non r.e.~semantic truth theory? This question was addressed by \cite{fischer_halbach_kriener_stern_2015}, who propose a number of criteria to this effect. Arguably, amongst these criteria, $\mathbb{N}$-categoricity stands out as particularly compelling: an axiomatic theory $\Sigma$ is an $\mathbb{N}$-categorical axiomatization of a semantic theory $\mrm{Th}_\mathfrak{M}$, where $\mathfrak{M}$ is a class of interpretations of the truth predicate defined over the standard model $\nat$, iff
\begin{align}\label{eq:ncat}(\mathcal{N},S)\vDash\Sigma&\text{ iff }S\in\mathfrak{M}.\end{align}
Indeed, the theory $\mrm{KF}$ is an $\mathbb{N}$-categorical axiomatization of Kripke's theory of truth, that is, a set $S\subseteq\omega$ is an interpretation of the truth predicate of $\mrm{KF}$ over the standard model iff $S$ is a fixed point of the Strong Kleene jump $\mrm{SK}$:
\begin{align*}
(\mathcal{N},S)\vDash\mrm{KF}\text{ iff }\mrm{SK}(S)=S.\end{align*}
Whilst it might have its limitations, the criterion of $\mathbb{N}$-categoricity is surely part of answering the question of whether a given axiomatic truth theory counts as an axiomatization of a given semantic truth theory.\footnote{$\nat$-categoricity is mainly cogent in the context of classical theories of truth where sentences $\vphi$ are in general not equivalent to $\T\corn{\vphi}$. When such an equivalence is in place -- e.g.~in the context of axiomatizations of Kripke's fixed-point semantics in a Strong Kleene setting --, $\nat$-categoricity may be trivially obtained.}

It may then come as a surprise that there are semantic truth theories that are not amenable to an $\mathbb{N}$-categorical axiomatization, i.e., to which the criterion of $\mathbb{N}$-categoricity is inapplicable. Indeed both the revision theory of truth (see, e.g., \cite{gupta_belnap_1993}) but also Kripke's theory of truth in its supervaluational variant are examples of such theories.\footnote{The impossibility to $\nat$-categorically axiomatize the revision theory of truth then permeates other truth theories based on revision rules, most notably Field's \cite{field_2008}.} The reason is that the definition of the respective class of (intended) models is too complicated in the recursion-theoretic sense: satisfaction (the left side of the biconditional \eqref{eq:ncat}) can be defined by a $\Delta^1_1$-formula of the language of second-order arithmetic. By contrast, the definition of the respective class of models (the right side of the biconditional) requires a $\Pi^1_1$-formula in the case of the supervaluation scheme -- and even more complicated formulae in the case of the revision theory or Field's theory. This observation entails that well-known theories intended to axiomatize supervaluational truth, notably the theory 
$\mrm{VF}$ from \cite{cantini_1990}, are not $\mathbb{N}$-categorical axiomatizations of the supervaluational version of Kripke's truth theory.

The lack of an $\mathbb{N}$-categorical axiomatization of the supervaluational version of Kripke's truth theory was one of the key aspects that motivated \cite{stern_2018}. In essence, he proposed the following: 
\bigskip
\begin{quote}
\textsc{The SSK-thesis:} there is a fixed-point semantics that captures the essential aspects of the supervaluational schema whilst limiting its recursion-theoretic complexity, hence resulting in an $\nat$-categorical axiomatization. 
\end{quote}
\bigskip
Indeed, \cite{stern_2018} puts forward such a semantics, \emph{supervaluation-style truth}, based on a satisfaction scheme that he calls $\mrm{SSK}$ -- for `supervaluational Strong Kleene'.  Moreover, Stern proposes an axiomatic truth theory, which he labels $\mrm{IT}$, and which he alleges to be an $\mathbb{N}$-categorical axiomatization of Kripke's truth theory based on the  scheme $\mrm{SSK}$. 

Of course, the \textsc{SSK thesis} is somewhat vague, since we haven't specified what the `essential aspects' of the supervaluational schema are. This can be clarified as follows.  The starting point of Stern's analysis was the observation that the supervaluation schemata
\begin{align*}
(\nat,S)\vDash_{s}\varphi&\text{ iff }\forall S'\supseteq S(\Phi(S')\Rightarrow (\nat,S')\vDash\varphi),\end{align*}
where $\Phi$ is an $\lt$-definable condition {with parameters}, can be understood as collecting classical consequences of the set of sentences Strong Kleene grounded in $S$ over the standard model:
$$\exists G(S\vDash_\mrm{sk}G\,\&\,\forall S'\supseteq S(\Phi(S')\,\&\,(\nat,S')\vDash G\Rightarrow (\nat,S')\vDash\varphi).\footnotemark$$
\footnotetext{$G$ is a set of sentences and by $(\nat,S)\vDash_e G$ we intend to convey that
$(\nat,S)\vDash_e\psi\text{ for all }\psi\in G.$}
The correspondence between the two characterization holds as long as $\Phi$ forces a precisification $S'$ to be consistent with the starting hypothesis $S$, that is, as long as $\Phi(S')$ entails that $\{\varphi\,|\,\neg\varphi\in S\}\cap S'=\emptyset$.\footnote{The correspondence will thus not hold for the trivial admissibility condition $\Phi$ with $\Phi(S'):\lra S'=S'$ and thus fails for the scheme $\mrm{sv}$ discussed below.} In particular, the supervaluation schemata $\mrm{VB,VC,MC}$ discussed below can be viewed as collecting classical \emph{second-order} consequences of the set of Strong Kleene truths over the hypothesis $S$. 

Stern observed that, instead of taking classical consequences of $G$ relative to a specific class of truth models over the standard model, one can simply consider the classical first-order consequences of the Strong Kleene-grounded sentences. This constitutes the SSK satisfaction scheme, and gives rise to what Stern called  \emph{supervaluation-style semantics}. This approach would lead to a significant reduction of the recursion-theoretic complexity and would even give rise to an arithmetically definable jump (or so he argued).
Specifically, \cite{stern_2018} claimed that the fixed points of the $\mrm{SSK}$-scheme are the fixed points of a first-order arithmetical operator $\Theta$ (cf.~Definition \ref{arithmetical operator}), and that the $\mrm{SSK}$ schema has an $\mathbb{N}$-categorical axiomatization in the form of the theory $\mrm{IT}$. In addition, under these assumptions, he showed that the $\mrm{SSK}$-scheme has the same minimal fixed point as the supervaluation scheme.

This reconstruction gives us a precise way of understanding the \textsc{SSK thesis}: the preservation of `essential aspects' of the supervaluational scheme amounts to recovering all first-order penumbral truths of classical logic and arithmetic, something that one would hope to retain when moving from second-order to first-order formulations of the supervaluational schema.

Unfortunately, as we show in this paper, the \textsc{SSK thesis} hasn't been successfully realized by \cite{stern_2018} since some of Stern's reasoning was flawed. Specifically:
\begin{itemize}
\item the fixed points of the operator associated with the $\mrm{SSK}$-schema and $\Theta$ constitute two distinct classes of fixed points;
\item the fixed points of the $\mrm{SSK}$-schema are in general \emph{not} closed under the $\omega$-rule;
\item as a consequence, the  minimal fixed point of the $\mrm{SSK}$-schema and the supervaluation scheme differ;
\item the theory $\mrm{IT}$ is not $\nat$-categorical with respect to $\mrm{SSK}$-fixed points. 
\end{itemize}

However, not all is lost and, as we show, {the \textsc{SSK-thesis} can be vindicated}, albeit in a less uniform way than argued in \cite{stern_2018}. In fact, since the fixed points of the $\mrm{SSK}$ schema and $\Theta$ come apart, we end up with different semantic constructions -- each of them recovering penumbral truths of classical logic and arithmetic -- that enjoy $\nat$-categorical axiomatizations in virtue of their lower recursion-theoretic complexity.\footnote{We recall that, because of the higher recursion-theoretic complexity of supervaluational fixed-point semantics, $\nat$-categorical axiomatizations will not be available for the latter. In some sense it is impossible to equip supervaluational fixed-point semantics with a fitting proof theory. This shows, for example, in the fact that the models of $\mrm{VF}$ are not just the supervaluational fixed points but, in addition, fixed points of $\Theta$ or even the set of stable truth of the Herzberger sequence over the empty set \cite[\S96]{cantini_1996}. It thus seems difficult to associate $\mrm{VF}$ with supervaluational fixed points in some strong sense.  } 
The choice forced upon us is essentially as follows. We can stick to the well-motivated $\mrm{SSK}$ schema but lose the desirable closure under $\omega$-logic. {As a consequence, the universal quantifier will not commute with the truth predicate in $\nat$-categorical axiomatizations of the $\mrm{SSK}$ schema.} 
Alternatively, we can adopt a more `standard' arithmetical jump with familiar closure conditions. {However, the arithmetical jump will, in contrast to the $\mrm{SSK}$ schema and its associated operator, ``merely'' collect consequences of a given truth set/hypothesis rather than submitting the hypothesis to semantic scrutiny. This undermines the philosophical and semantic motivation underlying supervaluational-style semantics} and leads to some undesirable formal features (e.g.~failure of what will be called `classical soundness').


In the paper we will study the classes of models given by fixed points of $\mrm{SSK}$, $\Theta$, and  $\Theta^*$ -- a variant of $\Theta$ equipped with additional closure properties. We will also provide $\nat$-categorical axiomatizations for each of them -- as we have seen, a highly desirable feature. 
After describing some preliminaries in Section \ref{sec:preliminaries} and Section \ref{sec:supervaluation-style-truth}, we isolate some gaps and problems in the previous attempt to realize the \textsc{SSK thesis} by \cite{stern_2018} in Section \ref{sec:problems}. These include $\mrm{SSK}$'s failure to be closed under $\omega$-logic.  Section \ref{sec:n-cat ssk} provides two $\nat$-categorical axiomatizations of $\mrm{SSK}$, that we will dub $\mrm{PK}$ and $\mrm{PK}^+$. We then move to the study of $\Theta$ and its variation $\Theta^*$: Section \ref{sec:n-cat theta} presents $\nat$-categorical axiomatizations for (the fixed points of) each operator, and establishes lower-bounds for their proof-theoretic strength. 
Section \ref{sec:relssksup} draws connections between supervaluation-style fixed-point semantics and the standard supervaluational approaches: although the minimal fixed points of $\mrm{VB}$ (a class of supervaluational fixed points that imposes a weak consistency condition), $\Theta$ and $\Theta^*$ coincide, this is not true for all fixed points. In fact, every $\mrm{VB}$ fixed point is a $\Theta^*$, and hence a $\Theta$ fixed point, but the converse provably fails. We conclude by assessing the results presented in the paper along some key dimensions.

\section{Preliminaries and notation}\label{sec:preliminaries}

\subsection{Language, Theories, Coding}\label{subsec:notation}

Our notation follows, for the most part, the conventions in \cite{halbach_2014}. We will be working with languages whose logical symbols are $\neg, \vee, \wedge, \forall, \exists$. We also write $\varphi \rightarrow \psi$ as an abbreviation for $\neg \varphi \vee \psi$. As a default practice, our base language $\lnat$ is a definitional extension of the language of Peano Arithmetic ($\mrm{PA}$) with finitely many function symbols for primitive recursive functions. We assume a standard formalization of the syntax of first-order languages as it is for instance carried out in \cite{hapu98}. The expression $\Bar{n}$ stands for the numeral of the number $n$ (although we omit the bar for specific numbers). We write $\# \varphi$ for the code or Gödel number of $\varphi$, and $\ulcorner \varphi\urcorner$ for the numeral of that code. $\lnat$ is assumed to contain a finite set of function symbols that will stand for certain primitive recursive operations. In particular, we want to include function symbols for some primitive recursive syntactic operations on expressions of the language under consideration; for that, we use the traditional subdot notation. For example, $\subdot \neg$ is the function symbol for the primitive recursive operation that, when inputted the code of a formula, yields the code of its negation. The same applies to $\subdot \vee, \subdot \forall, \subdot \wedge, \subdot \exists$. 
We assume a function symbol for the substitution function, and write $x (t/v)$ for the result of formally substituting $v$ with $t$ in $x$; $\ulcorner \varphi(\dot x)\urcorner$ abbreviates $\ulcorner \varphi (v)\urcorner (\mrm{num}(x)/\ulcorner v\urcorner)$, for $x$ a term variable (and provided $\varphi$ only has one free variable) and $\mrm{num}(x)$ the function symbol corresponding to the operation of sending a number to its numeral. Furthermore, we avail ourselves to the notation $t^\circ$ for the result of applying to a term $t$ the evaluation function (which outputs the value of the inputted term). Note that $\cdot^\circ$ is an abbreviation for a formula, and not a term of the language.  The development of syntax just mentioned can be comfortably carried out in Peano arithmetic $(\mrm{PA})$, which will be our background theory of syntax.

For the formulation of the theories of truth, we will (mostly) be concerned with the extension of $\lnat$ with a (unary) truth predicate $\T$. $\pat$ is then the theory formulated in the language $\lt$ consisting of the axioms of $\mrm{PA}$ with induction extended to the whole language. We will then write $\mrm{Var}_{\lt}(x)$ for the formula representing the set of (codes of) variables, $\mrm{CT}_{\lt}(x)$ for the formula representing the set of (codes of) closed terms of ${\lt}$, $\mathrm{For}_{\lt}(x)$ ($\mathrm{For}_{\lt}(x,y_0,...,y_n)$), for the formula representing the set of all formulae (formulae with $y_0,...y_n$ free) of ${\lt}$, and $\mathrm{Sent}_{\lt}(x)$, for the formula representing the set of all sentences of ${\lt}$. We also use $\mrm{Var}_{\lt}$, $\mrm{CT}_{\lt}$, $\mrm{For}_{\lt}$, and $\mrm{Sent}_{\lt}$ to stand for the sets corresponding to these formulae.  We will occasionally omit reference to ${\lt}$ when this is clear from the context. We often quantify over variables $s,t$ to refer to codes of closed terms, so that $\forall t\,\vphi(t)$ is short for $\forall x (\mrm{CT}_{\lt}(x)\ra \vphi(x))$.

We assume an appropriate ordinal  notation system OT up to $\Gamma_0$, and the ordinal less-than relation $<$ on $\Gamma_0$ (for more detailes, see, e.g., \cite[Ch.3]{pohlers_2009}). For simplicity, we identify each ordinal number with its notation. We use $0$ as the ordinal number, the ordinal sum $\alpha + \beta$, and the Veblen function $\vphi_{\alpha}\beta$. 
%
Given a language $\mathcal{L}$, an ordinal number $\alpha < \Gamma_0$, and an $\mathcal{L}$-formula $A$, transfinite induction for $A$ up to $\alpha$ is defined as the formula: 
\[
\mrm{TI}(\alpha, A):= \forall \beta (\forall \gamma < \beta A(\gamma) \to A(\beta)) \to \forall \beta < \alpha A(\beta).
\]
In line with the latter, the schema $\mrm{TI}_{\mathcal{L}}(<\alpha)$ is defined to be the set $\{ \mrm{TI}(\beta, A) \sth \beta < \alpha \ \& \ A \in \mathcal{L} \}$. Finally, if $S$ is a theory, its proof-theoretic ordinal (denoted $\vert S\vert$) is the ordinal $\alpha$ such that $\mrm{PA}+\mrm{TI}_{\lnat}(<\alpha)$ and $S$ prove the same $\lnat$-statements (and verifiably so within $\mrm{PA}$). If $T$, $S$ are theories, we write $\vert T\vert \geqq \vert S\vert$ to indicate that the proof-theoretic ordinal of $T$ is equal or greater than the proof-theoretic ordinal of $S$. Similarly, $\vert T\vert \equiv \vert S\vert$ is defined as $\vert T\vert \geqq \vert S\vert$ and $\vert S\vert \geqq \vert T\vert$.

\subsection{Supervaluations}\label{supervaluations}

In his seminal \cite{kripke_1975}, Kripke proposes to determine the extension of the truth predicate via a fixed-point construction. Famously, this construction is carried out in Strong Kleene (SK) logic. However, SK fails to capture many \textit{penumbral} truths, that is, truths that hold merely in virtue of the logical relations occurring between sentences of a language. An example of such a sentence is $\lambda\vee\neg \lambda$, for $\lambda$ the Liar sentence. As a remedy, Kripke in his paper also suggests to run the construction with some form of supervaluational logic. This will be our departure point. For $X\subseteq \mrm{Sent}_{\lt}$, we define
\begin{align*}
    \mrm{CON}(X)&:=\neg \exists \vphi (\{\#\vphi,\#\neg\vphi\}\subseteq X);\\
    \mrm{MXC}(X)&:= \neg \exists \psi(\{\#\psi,\#\neg\psi\}\subseteq X\land (\forall \vphi(\#\vphi\notin X \ra \#\neg \vphi\in X))).
\end{align*}
$\mrm{MXC}(X)$ expresses that $X$ is a maximally consistent set of $\lt$-sentences. 

For a satisfaction relation $e$, we will write $X \vDash_{e} A$ to abbreviate $(\nat,X, X^-)\vDash_{{e}} A$, where $X\subseteq \mrm{Sent}_{\lt}$ is the relevant interpretation of the truth predicate and $X^-$ is defined as $\{\#\vphi \sth \#\neg\vphi \in X\}\cup \{n\in\omega\sth \nat \vDash \neg \mrm{Sent}_{\lt}(\bar n)\}$. Similarly, for the classical satisfaction relation ($\vDash$), $X\vDash A$ just stands for $(\nat, X)\vDash A$. In the literature, one often distinguishes the following supervaluational satisfaction relations:
\begin{align}
\label{eq:sva}X\vDash_\mrm{sv} \varphi & \text{ iff }\forall X' (X'\supseteq X \Rightarrow X' \vDash \varphi)\\
\label{eq:svb} X\vDash_\mrm{vb} \varphi &\text{ iff } \forall X' (X'\supseteq X \, \&\,  X' \cap X^- =\varnothing \Rightarrow X' \vDash \varphi)\\
    X\vDash_\mrm{vc} \varphi &\text{ iff } \forall X' (X'\supseteq X\, \&\,  \mrm{CON}(X') \Rightarrow X' \vDash \varphi)\\
  \label{eq:svd}   X\vDash_\mrm{mc} \varphi &\text{ iff } \forall X' (X'\supseteq X\, \&\, \mrm{MCX}(X') \Rightarrow X' \vDash \varphi)
\end{align}

To avoid triviality, and following the truth-theoretic literature on supervaluationism, we are mainly interested in consistent set of sentences. For $\mrm{CON}(X)$ and $j\in \{\mrm{sv, vb, vc, mc}\}$, we let: 
\beq
J(X):=\{\#\varphi\sth \, X\vDash_j \varphi\}
\eeq

\noindent $J(\cdot)$, for $J\in \{\mrm{SV, VB, VC, MC}\}$, is the \emph{J-supervaluational jump}. It can be checked that, for arbitrary $X\subseteq \mrm{Sent}_{\lt}$, the jump is monotone. It then follows by the theory of positive inductive definitions (see e.g. \cite[Ch.1]{moschovakis_1974}, \cite[Ch.5]{mcgee_1991}) that $J(\cdot)$ has a fixed point, and in particular a minimal one obtained by iterating the operator over the empty set.

It is worth noting that, to establish the existence of non-trivial fixed points, we need a further property to guarantee that at every stage of the inductive definition we can find an admissible precisification. Otherwise triviality ensues. One property that guarantees the existence of non-trivial fixed points is to require the admissibility condition to be compact on the set of consistent interpretations: let $X:=\{S\,|\,\mrm{CON}(S)\}$ , $\Phi^S:=\{S'\,|\,S\subseteq S'\,\&\,\Phi(S')\}$ and for $Y\subseteq X$ set $\Phi(Y):=\{\Phi^S\,|\,S\in Y\}$. Then $\Phi$ is \emph{compact on $X$} iff for all $Y\subseteq X$:
$$\text{if }\Phi^{S_1}\cap\ldots\cap\Phi^{S_n}\neq\emptyset\text{ for all }S_1,\ldots S_n\in Y\text{ and all }n\in\omega,\text{ then }\bigcap\Phi(Y)\neq\emptyset.$$
The admissibility condition associated with the schemes $\mrm{SV, VB, VC, MC}$ are compact in that sense. 


\section{Supervaluation-style truth}\label{sec:supervaluation-style-truth}
While Kripke's fixed-point semantics over supervaluational logics succeeds in recovering all penumbral truths, the theory is not free from objections. As argued in \cite{stern_2018} supervaluational truth fails to be compositional and, because of its recursion-theoretic complexity, is not $\nat$-categorically axiomatizable. Accordingly, Stern develops a new theory of truth, which he calls \emph{supervaluational-style truth}, aimed at addressing some of these shortcomings. 

As we have seen, to address such worries Stern  notes that supervaluation \emph{satisfaction} involves recognizable \emph{Strong-Kleene} and \emph{classical} components. Stern studies a monotone evaluation schema isolating these two elements of supervaluational satisfaction:
\beq\label{eq:sskst}
X\vssk \varphi \text{ iff } \exists \psi(X\vsk\psi \text{ and } \pat\vdash \psi\rightarrow \varphi)
\eeq 
In the definition of $\vssk$, $\vsk$ is the familiar definition of satisfaction in Strong Kleene logic for $\lt$. 
For $X\subseteq \omega$ consistent, the \emph{Strong-Kleene supervaluational jump} is then:
\beq
\mrm{SSK}(X):=\{\#\varphi\sth \, X\vssk \varphi\}.
\eeq
The jump is monotone, and therefore will have fixed points, including a minimal one. 

To study the Strong-Kleene supervaluational jump, Stern also introduces a related jump based on a positive inductive definition in $\lnat$ (with parameters). 
\begin{dfn}\label{arithmetical operator}
Let $\xi(x,X)$ abbreviate the familiar formula of $\lnat$ employed to characterize Strong Kleene truth: 
\begin{align*}
    &  x\in \mrm{True}_0\\
        &\vee \exists y (x=\subdot\neg\subdot\neg y \wedge y \in X)\\
		&\vee  \exists y,z(x=(y\subdot{\vee} z) \wedge (y\in X\vee z\in X))\\
        &\vee  \exists y,z(x=\subdot\neg(y\subdot{\vee} z) \wedge (\subdot\neg y\in X\wedge \subdot\neg z\in X))\\
		&\vee  \exists y,z(x=(y\subdot{\wedge} z) \wedge (y\in X\wedge z\in X))\\
        &\vee  \exists y,z(x=\subdot\neg(y\subdot{\wedge} z) \wedge (\subdot\neg y\in X\vee \subdot\neg z\in X))\\
		&\vee  \exists y,v(x=(\subdot{\forall} vy) \wedge \forall z(y (z/v)\in X))\\
        &\vee  \exists y,v(x=\subdot \neg (\subdot{\forall} vy) \wedge \exists z(\subdot \neg y (z/v)\in X))\\
		&\vee  \exists y,v(x=(\subdot{\exists} vy) \wedge \exists z(y (z/v)\in X))\\
        &\vee  \exists y,v(x=\subdot \neg (\subdot{\exists} vy) \wedge \forall z(\subdot \neg y (z/v)\in X))\\
		&\vee  \exists t(x=\subdot \T (t) \wedge t^\circ \in X)\\
		&\vee  \exists t(x=\subdot \neg \subdot \T (t) \wedge (\subdot{\neg}t^\circ)\in X\vee \neg \mrm{Sent}(t^\circ))
\end{align*}
In the definition, $\mrm{True}_0$ is the set of all codes of true arithmetic literals. Then define 
\[
\theta(x,X):\lra \exists y (\xi(y,X)\wedge \mathrm{Pr}_{\mathrm{PAT}} (y\subdot \ra x)).
\]
Finally, the operator $\Theta: \mc{P}(\omega)\to \mc{P}(\omega) $ is defined as 
\[
\Theta(S):=\{ n\in \omega \sth \nat \vDash \theta(x,X) [n,S]\}.
\]
\end{dfn}

Stern provides arguments to show that (i) the fixed points of $\mrm{SSK}$ and $\Theta$ coincide and, making crucial use of (i), that (ii) there is an $\nat$-categorical (in the sense of \cite{fischer_halbach_kriener_stern_2015}) axiomatization of those fixed points. To achieve (ii), Stern introduces the following theory. 
\begin{dfn}\label{dn:ind truth}
The theory $\indt$ extends $\pat$ with the following axioms: 

\begin{enumerate}
	\item[$\mrm{IT1}$] $\forall s,t(\T(s\subdot{=}t)\leftrightarrow s^\circ=t^\circ) \wedge \forall s,t(\T(s\subdot{\neq} t)\leftrightarrow s^\circ\neq t^\circ)$
	\item[$\mrm{IT2}$] $\forall x,y(\mrm{Sent}(x\subdot{\wedge}y)\rightarrow (\T (x)\wedge \T (y)\rightarrow \T(x\subdot{\wedge}y)))$
	\item[$\mrm{IT3}$] $\forall x,y(\mrm{Sent}(x\subdot{\vee}y)\rightarrow (\T (x)\vee \T (y)\vee \exists z(\xi(z, \T)\wedge\mrm{Pr}_{\pat}(z\subdot \rightarrow x\subdot{\vee}y ))\leftrightarrow \T(x\subdot{\vee}y)))$
	\item[$\mrm{IT4}$]$\forall x,v (\mrm{Sent}(\subdot{\forall}vx)\rightarrow (\forall t\T(x (t/v))\rightarrow\T(\subdot{\forall} v x)))$
	\item[$\mrm{IT5}$]$\forall x,v(\mrm{Sent}(\subdot{\exists}vx)\rightarrow (\exists t\T(x(t/v))\vee \exists w(\xi(w, \T)\wedge\mrm{Pr}_{\pat}(w\subdot{\rightarrow} \subdot{\exists}vx ))\leftrightarrow \T( \subdot{\exists}vx)))$
	\item[$\mrm{IT6}$]$\forall x(\T (x)\leftrightarrow \T(\subdot \T x))$
	\item[$\mrm{IT7}$] $\forall x(\T(\subdot{\neg} x) \vee \neg \mrm{Sent}(x)\leftrightarrow \T (\subdot \neg \subdot \T x) )$
	\item[$\mrm{IT8}$]$\forall x,y(\T (x)\wedge \mrm{Pr}_{\pat}(x\subdot{\rightarrow}y)\rightarrow \T (y))$ 
	\item[$\mrm{IT9}$]$\forall x(\T (\subdot{\neg}x)\rightarrow \neg \T (x))$
	\item[$\mrm{IT10}$]$\T\ulcorner \forall x(\T ( x)\rightarrow \mrm{Sent}( x))\urcorner$
	\item[$\T\text{-}\mrm{Out}$] $\forall t_1,...,t_n(\T\ulcorner \vphi(\subdot t_1,...,\subdot t_n)\urcorner \ra \vphi(t_1,...,t_n)$ for every formula $\vphi(t_1,...,t_n)$ of $\lt$.	
\end{enumerate}	
\end{dfn}
As it turns out the argument in support of (i) was flawed and, as a consequence, neither (i) nor (ii) can be achieved.

\section{Supervaluational-style truth: some problems}\label{sec:problems}

Stern tried to establish that the fixed points of SSK and $\Theta$ coincide, i.e., that $S=\Theta(S) \Leftrightarrow S=\mrm{SSK}(S)$. However, the proof relied on the mistaken assumption that $\Theta(S)\subseteq \mrm{SSK}(S)$ for any consistent set of formulas $S$ (see \cite[Lemma 3.4]{stern_2018}). This fails for the following reason. We say that a set $X\subseteq \omega$ is sound with respect to some operator $\mc{J}:\mc{P}(\omega)\longrightarrow \mc{P}(\omega)$ when $X\subseteq \mc{J}(X)$. Then, the above failure is to be located in the fact that \textit{any} set of formulas is sound with respect to $\Theta$, and this does not happen for $\mrm{SSK}$---not even when the set of formulas is consistent. 

\begin{prop}\label{theta is always sound}
Let $S$ be a any set of codes of formulae of $\lt$. Then, $S\subseteq \Theta(S)$. 
\end{prop}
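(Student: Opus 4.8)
The plan is to show that every $\#\vphi\in S$ belongs to $\Theta(S)$ by exhibiting, for each such $\vphi$, a single witness for the existential quantifier in $\theta(\#\vphi,S)\lra\exists y(\xi(y,S)\wedge\mrm{Pr}_{\pat}(y\subdot\ra\#\vphi))$. The naive attempt of letting the witness be $\#\vphi$ itself fails: mere membership $\#\vphi\in S$ tells us nothing about whether $\xi(\#\vphi,S)$ holds, since the clauses of $\xi$ consult $S$ only through the membership of \emph{immediate subformulae}. This gap between ``lying in the hypothesis'' and ``being Strong Kleene grounded in the hypothesis'' is exactly what the proposition turns to its advantage.

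The key move is to choose a witness whose immediate subformula is $\vphi$ and which is classically equivalent to $\vphi$. I would take the witness to be $\#(\vphi\vee\vphi)$. First, $\xi(\#(\vphi\vee\vphi),S)$ holds: the disjunction clause of $\xi$ declares $\#(\psi\vee\chi)$ Strong Kleene true relative to $S$ as soon as $\#\psi\in S$ or $\#\chi\in S$, and here the common disjunct $\#\vphi$ lies in $S$ by hypothesis. Second, $(\vphi\vee\vphi)\ra\vphi$ is a propositional validity and hence a theorem of $\pat$, so $\mrm{Pr}_{\pat}(\#(\vphi\vee\vphi)\subdot\ra\#\vphi)$ holds. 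Together these two facts witness $\theta(\#\vphi,S)$, whence $\#\vphi\in\Theta(S)$.

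As $\vphi\in S$ was arbitrary, this yields $S\subseteq\Theta(S)$ with no appeal to consistency, $\omega$-closure, or any other structural property of $S$ --- precisely the contrast with $\mrm{SSK}$ that the surrounding discussion is after. There is essentially no computational obstacle once the witness is identified; the only points to verify are bookkeeping ones. I would confirm that the disjunction clause of $\xi$ imposes no sentencehood constraint on the compound (it does not), and that the coded implication $\mrm{Pr}_{\pat}(y\subdot\ra\#\vphi)$ is oriented from the Strong Kleene truth $y$ to the target $\vphi$ rather than the reverse. One may equally use $\#\neg\neg\vphi$ via the double-negation clause of $\xi$; the disjunction witness is simply the most transparent instance of the general mechanism whereby $\xi$ upgrades $S$-membership of a subformula into Strong Kleene truth of a classically equivalent compound.
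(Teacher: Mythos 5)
Your proof is correct and matches the paper's argument essentially verbatim: the paper witnesses the existential in $\theta$ with $\#(\vphi\wedge\vphi)$ via the conjunction clause of $\xi$, while you use $\#(\vphi\vee\vphi)$ via the disjunction clause, an immaterial difference since in both cases membership of $\#\vphi$ in $S$ alone secures $\xi$, and the corresponding implication is a $\pat$-theorem. Your closing observations (no consistency or closure assumptions needed, the $\neg\neg\vphi$ alternative) are accurate but add nothing beyond the paper's one-line proof.
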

\begin{proof}
If $\#\vphi\in S$, then $\nat \vDash \xi(\corn{\vphi\wedge\vphi}, S)$ and, since $\vphi\wedge\vphi\ra \vphi$ is provable in $\pat$, $\#\vphi \in \Theta(S)$. 
\end{proof}
Proposition \ref{theta is always sound} immediately entails that iterations of $\Theta$ over any $X\subseteq \omega$ reach a fixed point. These fixed points may turn out to be rather ill-behaved -- even if we focus on the non-trivial, consistent fixed points:
\begin{prop}\label{failures of theta}
There are formulae $\vphi, \psi$ of $\lt$ and $S$ a consistent set of codes of sentences such that: 

\begin{enumerate}
    \item $S=\Theta(S)$ and $\#\T\corn{\vphi} \in S$ but $\#\vphi \notin S$.
    \item $S=\Theta(S)$ and $\#\neg \T\corn{\vphi} \in S$ but $\#\neg \vphi \notin S$.
    \item $S=\Theta(S)$ and $\#\vphi \vee\psi \in S$ but $\#\vphi\notin S$ and $\#\psi\notin S$ and there is no $\chi$ such $S\vDash_{\mrm{sk}}\chi$ and $\mrm{PAT}\vdash \chi\ra\vphi\vee\psi$.
    \item $S=\Theta(S)$ and $\#\exists v \vphi \in S$ but $\#\vphi(\bar n)\notin S$ for all $n\in\omega$ and and there is no $\chi$ such $S\vDash_{\mrm{sk}}\chi$ and $\mrm{PAT}\vdash \chi \ra \exists v\vphi$.
\end{enumerate}
\end{prop}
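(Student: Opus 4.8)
The four clauses are existence statements, so for each I would exhibit a concrete consistent fixed point. The common engine is the remark following Proposition~\ref{theta is always sound}: since $\Theta$ is monotone and $X\subseteq\Theta(X)$ for every $X$, the upward iteration of $\Theta$ over any seed $X_0$ stabilises at the least fixed point $S\supseteq X_0$. The plan is to choose $X_0$ so that the desired ``witness'' sentence is present, and then argue that the closure neither forces the forbidden sentences nor introduces a complementary pair. Two features of $\Theta$ make this possible. First, $\pat$ proves no disquotation, so $\T\corn{\varphi}$ and $\varphi$ are $\pat$-independent for suitable $\varphi$. Second, and crucially for clauses (3)--(4), membership $n\in\Theta(S)$ only requires some $y$ with $\xi(y,S)$ (the one-step Strong Kleene jump $\mrm{SK}(S)$) to $\pat$-imply $n$; this is weaker than full Strong Kleene satisfaction $\vsk$, so a sentence may be kept in $S$ by a witness $y$ that satisfies $\xi(y,S)$ yet has $S\not\vsk y$ --- concretely the self-conjunction $y=\psi\subdot\wedge\psi$ from the proof of Proposition~\ref{theta is always sound}, which lies in $\mrm{SK}(S)$ as soon as $\#\psi\in S$ but is Strong Kleene true only if $\psi$ is.

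For (1) and (2) I would use a truth-teller ``at one remove''. Fix a syntactically non-trivial truth-teller $\tau$ with $\pat\vdash\tau\leftrightarrow\T\corn{\tau}$, and set $\varphi:=\T\corn{\tau}$. For (1), seed $X_0=\{\#\T\corn{\T\corn{\tau}}\}$: the seed persists via self-conjunction, while --- because $\pat$ proves no link between $\T\corn{\T\corn{\tau}}$ and $\T\corn{\tau}$ --- closing under $\Theta$ forces neither $\varphi=\T\corn{\tau}$ nor $\tau$ (nor their negations) into $S$. Hence $\#\T\corn{\varphi}\in S$ while $\#\varphi\notin S$. Clause (2) is symmetric, seeding $\#\neg\T\corn{\T\corn{\tau}}$. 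By contrast, seeding $\T\corn{\tau}$ directly would force $\tau$ through $\pat\vdash\T\corn{\tau}\to\tau$, which is exactly why the extra $\T$ is needed.

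For (3) I would take two syntactically independent truth-tellers $\tau_1,\tau_2$ and seed $X_0=\{\#(\tau_1\subdot\vee\tau_2)\}$. The disjunction is retained in $S$ only through the witness $(\tau_1\subdot\vee\tau_2)\subdot\wedge(\tau_1\subdot\vee\tau_2)\in\mrm{SK}(S)$, which is not Strong Kleene true, while neither disjunct is ever forced (nothing in $\mrm{SK}(S)$ $\pat$-implies a single truth-teller); one then checks that no genuinely $\vsk$-true $\chi$ $\pat$-implies $\tau_1\vee\tau_2$, giving the last conjunct of (3). For (4) I would use a parametrised truth-teller, i.e.\ a formula $\varphi(v)$ with $\pat\vdash\varphi(\bar n)\leftrightarrow\T\corn{\varphi(\bar n)}$ for every $n$, and seed $\#\subdot\exists v\,\varphi$. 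Again it persists via self-conjunction, while no instance $\varphi(\bar n)$ is forced and no $\vsk$-true $\chi$ $\pat$-implies $\exists v\,\varphi$.

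The main obstacle in every clause is consistency: I must show that the $\Theta$-closure of $X_0$ contains no pair $\{\#\alpha,\#\neg\alpha\}$ and omits the designated sentences. The clean route is to produce a consistent, $\Theta$-closed (prefixed-point) set $M\supseteq X_0$ avoiding the forbidden sentences; minimality of $S$ then gives $S\subseteq M$, delivering both consistency and the exclusions at once. Constructing $M$ is delicate precisely because these truth predicates are non-disquotational, so $M$ cannot be taken to be the complete theory of a classical model (Tarski). Instead I would build $M$ by a controlled transfinite construction that keeps the chosen truth-tellers and the critical (negated) truth-ascriptions permanently undecided, exploiting the reduction that $\Theta(S)$ is consistent whenever $\pat$ together with the one-step Strong Kleene truths over $S$ is satisfiable.
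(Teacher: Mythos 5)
Your proposal is, in substance, the paper's own proof. The paper obtains each fixed point exactly as you do, by iterating $\Theta$ over a one-element seed (licensed by Proposition \ref{theta is always sound} plus monotonicity): for (3) it uses precisely your seed $\{\#(\tau_0\vee\tau_1)\}$, for (4) the seed $\{\#\exists x(\tau\wedge x\geq 0)\}$, and for (1)--(2) the seeds $\{\#\T\corn{\neg\tau}\}$ and $\{\#\neg\T\corn{\neg\tau}\}$. Your double-ascription variants $\T\corn{\T\corn{\tau}}$ and $\neg\T\corn{\T\corn{\tau}}$ work for the same reason (no $\pat$-provable link across one extra layer of $\T$), and your diagnosis that seeding $\T\corn{\tau}$ directly would force $\tau$ via $\pat\vdash\tau\lra\T\corn{\tau}$ is exactly the consideration behind the paper's insertion of the negation. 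On consistency and the exclusion claims the paper is just as terse as you are, so your majorization strategy --- trap the least fixed point over $X_0$ inside a consistent $\Theta$-closed $M\supseteq X_0$ avoiding the forbidden codes --- is a legitimate way of discharging what the paper leaves implicit, and your observation that $M$ cannot simply be $\mrm{Th}(\nat,E)$ for a classical $E$ (the $\neg\T$-clause of $\xi$ together with the Liar blocks this) is correct.

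The one step you defer that the paper actually carries out is the final conjunct of (3) and (4): that there is no $\chi$ with $S\vsk\chi$ and $\pat\vdash\chi\ra\tau_0\vee\tau_1$. Note that your majorization bounds \emph{membership in} $S$, not Strong Kleene satisfaction \emph{over} $S$, so it cannot by itself deliver this conjunct: the quantifier ranges over all $\chi$ that $S$ SK-satisfies, not merely over the one-step jump $\mrm{SK}(S)$ that your self-conjunction remark concerns. The missing (short) argument is the paper's: for consistent $S$, $S\vsk\chi$ implies $(\nat,S)\vDash\chi$ (classical soundness of SK-satisfaction on consistent sets, the paper's footnoted lemma); since $(\nat,S)\vDash\pat$, it follows that $(\nat,S)\vDash\tau_0\vee\tau_1$; and since the diagonal biconditionals hold in $(\nat,S)$, $(\nat,S)\vDash\tau_i$ iff $\#\tau_i\in S$, so one of $\#\tau_0,\#\tau_1$ lands in $S$ --- contradicting the exclusions your construction already arranged. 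With that bridging lemma supplied, your plan goes through and coincides with the paper's proof.
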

\begin{proof}
    For 1: consider the fixed point that is obtained by iterating $\Theta$ over the set $X=\{ \#\T \corn{\neg \tau}\}$, for $\tau$ a truth-teller. 
    For 2: consider the fixed point that is obtained by iterating $\Theta$ over the set $X=\{ \#\neg \T \corn{\neg \tau}\}$.
    For 3: consider the fixed point $S$ that is obtained by iterating $\Theta$ over the set $X=\{\#\tau_0\vee\tau_1\}$; clearly, $\#\tau_0\notin S$ or $\#\tau_1\notin S$. Now suppose there is a $\chi$ such that $S\vDash_{\mrm{sk}}\chi$ and $\mrm{PAT}\vdash\chi\ra\tau_1\vee\tau_2$. We obtain $S\vDash\chi$\footnote{We note that for consistent sets of sentences  $S$ we have that
    $$\text{if }S\vDash_\mrm{sk}\varphi,\text{ then }S\vDash\varphi,$$ for all sentences $\varphi$.} and $S\vDash\chi\ra\tau_0\vee\tau_1$ and thus $S\vDash\tau_0\vee\tau_1$. By the propeties of $\tau_0$ and $\tau_1$ this in turn implies that either $\#\tau_0\in S$ or $\#\tau_1\in S$. Contradiction.
    For 4: consider the fixed point that is obtained by iterating $\Theta$ over the set $X=\{ \#\exists x (\tau\wedge x\geq 0)\}$, and apply the reasoning for (3) above. 
\end{proof}

The proposition entails the key property of classical soundness (cf.~Def.~\ref{dfn:classound_downnwards_closure}) does not obtain for all $\Theta$ fixed-points. Classical soundness will play a prominent role in what follows. 

\begin{corollary}\label{failure of classical soundness}
There are set of codes of sentences $S$ such that $S\subseteq \Theta(S)$ and $\#\vphi\in S$ but $S\nvDash \vphi$. In particular, there are fixed points $S = \Theta(S)$ such that $\#\vphi\in S$ but $S\nvDash \vphi $.
\end{corollary}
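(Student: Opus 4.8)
The plan is to read the corollary straight off Proposition \ref{failures of theta}. The point is that ``classical soundness'' asks precisely that $\#\vphi\in S$ imply $S\vDash\vphi$, and each clause of Proposition \ref{failures of theta} already exhibits a sentence deposited in a consistent $\Theta$-fixed point $S$ whose classical content is \emph{not} supported by $S$. So there is no fresh construction to perform: I would merely choose a witnessing formula and check that its classical evaluation in $(\nat,S)$ collapses to a membership fact that the proposition rules out.

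Concretely, I would invoke clause (1). Let $\tau$ be a truth-teller and let $S$ be the $\Theta$-fixed point obtained by iterating $\Theta$ over $\{\#\T\corn{\neg\tau}\}$, so that by Proposition \ref{failures of theta}(1) we have $\#\T\corn{\neg\tau}\in S$ while $\#\neg\tau\notin S$. Take the witness to be $\vphi:=\T\corn{\neg\tau}$, whence $\#\vphi\in S$ by construction. For the failure of satisfaction, recall that $S\vDash A$ abbreviates $(\nat,S)\vDash A$ with $\T$ interpreted by $S$; hence $(\nat,S)\vDash\T\corn{\neg\tau}$ holds iff the value of the closed term $\corn{\neg\tau}$, namely $\#\neg\tau$, lies in $S$. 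Since $\#\neg\tau\notin S$, we conclude $S\nvDash\vphi$, exactly as required. Because $S$ is a genuine fixed point, $S=\Theta(S)$, and so $S\subseteq\Theta(S)$ as well; thus the stated existence claim and the strengthened ``in particular'' clause about fixed points are secured simultaneously.

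I would also record an alternative, perhaps more vivid, witness coming from clause (3): taking $S$ to be the fixed point over $\{\#(\tau_0\vee\tau_1)\}$ for truth-tellers $\tau_0,\tau_1$, the sentence $\vphi:=\tau_0\vee\tau_1$ satisfies $\#\vphi\in S$, yet $S\nvDash\vphi$ because $(\nat,S)\vDash\tau_i$ reduces, via the defining biconditional $\tau_i\lra\T\corn{\tau_i}$, to $\#\tau_i\in S$, and both disjuncts are excluded by the proposition.

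There is essentially no obstacle here, as all the substance resides in Proposition \ref{failures of theta}. The only step demanding genuine care is the routine verification that classical satisfaction of the chosen witness collapses to a membership statement --- either $(\nat,S)\vDash\T\corn{\neg\tau}\Leftrightarrow\#\neg\tau\in S$, or $(\nat,S)\vDash\tau_i\Leftrightarrow\#\tau_i\in S$ using the truth-teller's defining equivalence --- so that the non-membership facts delivered by the proposition yield the failure of classical soundness.
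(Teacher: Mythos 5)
Your proposal is correct and follows essentially the same route as the paper, which states the corollary as an immediate consequence of Proposition~\ref{failures of theta} without further argument: you instantiate clause (1) (or (3)) and make explicit the one tacit step, namely that classical satisfaction of the witness in $(\nat,S)$ collapses to a membership fact the proposition excludes. Both your witnesses check out, so nothing is missing.
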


Proposition \ref{theta is always sound} fails for SSK. By taking, for instance, $S=\{\#\tau_0\vee\tau_1\}$, where $\tau_0$ and $\tau_1$ are truth-tellers, one can immediately see that $S\nsubseteq \mrm{SSK}(S)$.
However, although none of the problems above apply to $\mrm{SSK}$ fixed points, the latter are not very attractive either, as they are not closed under $\omega$-logic, a basic feature of SK fixed points.

\begin{prop}\label{failure of omega logic}
There are fixed points $X$ of $\mrm{SSK}$ and a formula $\vphi(x)$ of $\lt$ such that $\# \vphi(\bar{n})\in X$ for any $n\in \omega$, but $\#\forall y\, \vphi\notin X$. 
\end{prop}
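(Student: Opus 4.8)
The plan is to first reduce the statement to an $\omega$-incompleteness phenomenon for a first-order consequence operation. Observe that, since Strong Kleene satisfaction is closed under conjunction and $\pat$ is closed under $\wedge$-introduction, one has $\#\chi\in\mrm{SSK}(X)$ if and only if $\chi$ is first-order derivable in $\pat$ from the set $\{\psi\mid X\vsk\psi\}$ of Strong Kleene truths of $X$ (a single premise $\psi$ can always be traded for a finite conjunction, and conversely). Thus a fixed point $X=\mrm{SSK}(X)$ is exactly the set of $\lt$-sentences first-order derivable in $\pat$ from its own Strong Kleene truths, and the proposition amounts to the claim that this consequence set proves every numerical instance $\vphi(\bar n)$ without proving $\forall y\,\vphi$.

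Next I would fix the fixed point and isolate the two demands on $\vphi$. For membership I want, for each $n$, a Strong Kleene truth $\psi_n$ of $X$ with $\pat\vdash\psi_n\ra\vphi(\bar n)$; for non-membership I want $\forall y\,\vphi$ to be neither Strong Kleene true in $X$ nor first-order derivable in $\pat$ from the Strong Kleene truths of $X$. A first, decisive observation is that $\vphi(\bar n)$ must fail to be Strong Kleene true for cofinally many $n$: if every instance were Strong Kleene true then, by the clause for $\forall$, so would be $\forall y\,\vphi$, whence $\#\forall y\,\vphi\in X$. Hence each $\vphi(\bar n)$ has to enter $X$ as a genuinely \emph{penumbral} consequence --- like $\lambda\vee\neg\lambda$ for the Liar $\lambda$ --- rather than by being grounded. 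The natural device is a diagonal formula conjoining such a non-grounded, $\pat$-provable excluded-middle factor (to defeat Strong Kleene truth of every instance, and hence of the universal) with a second factor whose provability at each standard $\bar n$ is non-uniform, so that the universal closure is not provable.

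To establish that $\forall y\,\vphi$ is not a consequence of the Strong Kleene truths of $X$ --- the heart of the argument --- I would proceed model-theoretically: exhibit a model $M$ of $\pat$ together with all Strong Kleene truths of $X$ in which $\forall y\,\vphi$ fails. Since each instance $\vphi(\bar n)$ is derivable it holds at every standard point of $M$, so the falsifying witness must be nonstandard; the construction exploits the fact that the Strong Kleene truths of $X$ pin down $\T$ only on the standard-coded grounded sentences, leaving $\T^{M}$ free on nonstandard codes, which is where $\vphi$ is made to fail.

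The main obstacle is precisely this non-derivability of the universal, and it is genuinely delicate for two reasons that rule out the naive constructions. First, \emph{every} true arithmetical sentence is Strong Kleene true in $X$, and true arithmetic is itself closed under the $\omega$-rule; consequently the $\omega$-gap cannot sit in any arithmetical component of $\vphi$ (a Gödel- or consistency-style $\forall y\,\vphi$ would be a true arithmetical sentence, hence Strong Kleene true, hence already in $X$). Second, any \emph{uniform} family of Strong Kleene true witnesses $\psi(\bar n)$ has a Strong Kleene true universal $\forall y\,\psi(y)$, which would then furnish a single premise for $\forall y\,\vphi$. The construction must therefore locate its $\omega$-gap in the non-grounded, truth-theoretic part of $\vphi$ and make the witnessing derivations essentially non-uniform, and this is exactly what the nonstandard-model argument has to certify. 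A secondary obstacle is to secure a \emph{consistent} fixed point $X$ that actually contains all the instances: because $\mrm{SSK}$ is not inflationary (unlike $\Theta$; cf.\ Proposition \ref{theta is always sound}), one cannot simply iterate upward from a seed, so some care is needed either to use the minimal fixed point or to build a tailored consistent one in which the chosen instances are penumbrally forced.
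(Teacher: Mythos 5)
Your framing is accurate as far as it goes---a fixed point of $\mrm{SSK}$ is exactly the set of $\pat$-consequences of its own Strong Kleene truths; the instances $\vphi(\bar n)$ cannot all be SK-grounded (else the SK clause for $\forall$ puts $\forall y\,\vphi$ into $X$); and your two obstacles (true arithmetic is SK-true and itself $\omega$-closed, and any uniform family of SK-true witnesses has an SK-true universal) correctly pin the $\omega$-gap to the non-grounded, truth-theoretic part of $\vphi$. These match the design considerations behind the paper's construction. But the proposal stops precisely where the mathematics starts: you never exhibit $\vphi$, and the step you yourself call the heart of the argument---that $\forall y\,\vphi$ is not $\pat$-derivable from the Strong Kleene truths of $X$---is delegated to an unspecified nonstandard-model construction. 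Producing that countermodel is exactly as hard as the original claim: nothing in your outline shows that $\pat$ plus the SK truths of the fixed point is consistent with $\neg\forall y\,\vphi$, and your template (an ungrounded excluded-middle factor conjoined with a factor whose instancewise provability is non-uniform) does not deliver a candidate, since by your own first obstacle the non-uniform factor cannot be arithmetical and you offer no truth-theoretic one. Your ``secondary obstacle'' is also a detour: if the instances are chosen to be outright $\pat$-theorems, they lie in $\mrm{SSK}(X)$ for \emph{every} $X$ (take $0=0$ as the SK premise), so the minimal fixed point $\mrm{I_{ssk}}$ works and no tailored fixed point is needed.

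The missing idea, which the paper supplies, is transfinite induction along the ordinal notation system applied to a parametrized truth-teller $\tau(x):\lra\T(\corn{\tau(x)}(\mrm{num}(x)/\corn{x}))$, with $\mrm{TI}(\tau,\alpha):\lra\mrm{Prog}(\tau)\ra(\forall x<\alpha)\,\tau(x)$. Setting $f(0)=0$ and $f(n+1)=\omega^{f(n)}$, each $\mrm{TI}(\tau,f(n))$ is a theorem of $\pat$, hence in $\mrm{I_{ssk}}$; this makes the instances provable but essentially non-uniformly so, with the gap located by ordinal analysis rather than by a model construction: $\pat\nvdash\mrm{TI}(\tau,\varepsilon_0)$, and an induction on the stages of the construction of $\mrm{I_{ssk}}$ shows that the SK-grounded sentences never decide the (ungrounded) truth-teller instances and so never extend the provable transfinite induction for $\tau$; hence $\mrm{TI}(\tau,\varepsilon_0)\notin\mrm{I_{ssk}}$. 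Since fixed points are closed under $\pat$-consequence, and $\forall y\,\forall x(x<f(y)\ra\tau(x))$ yields, given $\mrm{Prog}(\tau)$, the consequent of $\mrm{TI}(\tau,\varepsilon_0)$, the universal sentence cannot belong to $\mrm{I_{ssk}}$ even though every instance does. Until you either adopt such a witness or actually carry out the consistency argument your countermodel strategy requires, what you have is a correct plan, not a proof.
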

\begin{proof}
By the diagonal lemma, define a parametrized truth-teller:
\beq
    \tau(x):\lra \T(\corn{\tau(x)}(\mrm{num}(x)/\corn{x})).
\eeq
Assuming the ordinal notation system introduced in \ref{subsec:notation}, let 
\begin{align*}
\mrm{TI}(\tau,\alpha)&:\lra \mrm{Prog}(\tau) \ra (\forall x <\alpha)\,\tau(x),
\end{align*}


Now consider $\mrm{I_{ssk}}$. Since, it is well-known that $\pat \vdash \mrm{TI}(\tau,f(n))$, with $f(0)=0$, $f({n+1})= \omega^{f(n)}$, $\varepsilon_0:= \mrm{sup}\{f(n) \,|\,n\in \omega\}$, $\mrm{TI}(\tau,f(n)) \in \mrm{I_{ssk}}$.

Since $\pat \nvdash \mrm{TI}(\tau,\varepsilon_0)$, it can be shown by induction on the construction of $\mrm{I_{ssk}}$ that $\mrm{TI}(\tau,\varepsilon_0)\notin  \mrm{I_{ssk}}$. However, assuming $\mrm{Prog}(\tau)$, an application of the $\omega$-rule would enable one to go from $\forall x (x < f(\bar n)\ra\tau(x)))$ for each $n$ to $\forall y,x(x<f(y)\ra\tau(x))$, which would entail the consequent of $\mrm{TI}(\tau,\varepsilon_0)$, hence a contradiction.

\end{proof}
\begin{remark}
Nothing rests on the choice of the truth-teller in the previous proposition. For instance, we could have used a parametrized liar sentence or other `ungrounded' sentences. It's worth noting that the `$\mrm{SK}$-fragment' of the construction is still closed under the $\omega$-rule, in the sense that, if each of the $A(\bar{n})$ are $\mrm{SK}$-satisfied in the relevant parameter, so is $\forall x A$. 
\end{remark}

The issues above also imply that Stern's theory $\indt$ is not an $\nat$-categorical axiomatization of the fixed points of either SSK or $\Theta$. In particular: 

\begin{prop}\label{failures wrt it}
There are sets of codes of sentences $S$ such that: 

\begin{enumerate}
    \item\label{Tout} $S=\Theta(S)$ and $S\nvDash \forall t_1,...,t_n(\T\ulcorner \vphi(t_1,...,t_n)\urcorner \ra \vphi(t_1,...,t_n))$ for some formula $\vphi(t_1,...,t_n)$ of $\lt$.
    \item\label{Tnegdel} $S=\Theta(S)$ and $S\nvDash \forall t(\T (\subdot \neg \subdot \T t)\ra \T(\subdot{\neg}t) \vee \neg \mrm{Sent}(t))$
    \item $S=\mrm{SSK}(S)$ and $S\nvDash \forall v,x (\mrm{Sent}(\subdot{\forall}vx)\rightarrow (\forall t\T(x\, ( t/v))\rightarrow\T(\subdot{\forall} v x)))$
    \item $S=\mrm{SSK}(S)$ and $S\nvDash \forall x,y(\mrm{Sent}(x\subdot{\vee}y)\rightarrow (\T (x)\vee \T (y)\vee \exists z(\xi(z, \T)\wedge\mrm{Pr}_{\pat}(z\subdot \rightarrow x\subdot{\vee}y ))\ra \T(x\subdot{\vee}y)))$
    \item $S=\mrm{SSK}(S)$ and $S\nvDash\forall x,y(\mrm{Sent}(\subdot{\exists}vx)\rightarrow (\exists t\T(x\, (t/v))\vee \exists w(\xi(w, \T)\wedge\mrm{Pr}_{\pat}(w\subdot{\rightarrow} \subdot{\exists}vx ))\ra \T( \subdot{\exists}vx)))$ 
\end{enumerate}
Hence, no theory featuring $\T$-$\mrm{Out}$ or $\mrm{IT7}$ (right-to-left) can be $\nat$-categorical with respect to fixed points of SSK, and no theory featuring $\mrm{IT4}$, $\mrm{IT3}$, or $\mrm{IT5}$ can be $\nat$-categorical with respect to fixed points of $\Theta$. 
\end{prop}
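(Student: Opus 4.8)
The plan is to reduce clauses (1)--(2) to the $\Theta$-pathologies already isolated and clauses (3)--(5) to the failure of $\omega$-logic, in each case exhibiting one explicit refuting instance of the axiom at hand. Clause (1) is immediate from Corollary~\ref{failure of classical soundness}: it supplies $S=\Theta(S)$ and a sentence $\vphi$ with $\#\vphi\in S$ (hence $(\nat,S)\vDash\T\corn{\vphi}$) but $S\nvDash\vphi$, which is exactly a refutation of the sentential instance $\T\corn{\vphi}\ra\vphi$ of $\T$-$\mrm{Out}$. For clause (2) I would take the fixed point and the sentence $\psi=\neg\tau$ from Proposition~\ref{failures of theta}(2), so that $\#\neg\T\corn{\psi}\in S$ while $\#\neg\psi\notin S$, and instantiate the right-to-left direction of $\mrm{IT7}$ at the closed term $t=\corn{\psi}$. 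The only thing to check is that under the coding conventions $\subdot\neg\subdot\T t$ and $\subdot\neg t$ evaluate to $\#\neg\T\corn{\psi}$ and $\#\neg\psi$ respectively; then the antecedent $\T(\subdot\neg\subdot\T t)$ holds, whereas $\T(\subdot\neg t)$ fails and $\neg\mrm{Sent}(t)$ fails (as $\psi$ is a sentence), so both disjuncts of the consequent fail.

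Clause (3) is a direct reading of Proposition~\ref{failure of omega logic}. There an SSK-fixed point $X$ and a formula $\vphi$ (with sole free variable $v$) satisfy $\#\vphi(\bar n)\in X$ for every $n$ but $\#\forall v\,\vphi\notin X$; hence $(\nat,X)\vDash\forall t\,\T(\corn{\vphi}(t/v))$ while $(\nat,X)\nvDash\T(\subdot\forall v\corn{\vphi})$, which refutes $\mrm{IT4}$.

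The heart of the argument, and the step I expect to be the main obstacle, is clauses (4)--(5). The point is that $\mrm{IT3}$ and $\mrm{IT5}$ appeal to the \emph{one-step} operator $\xi(\cdot,\T)$, whereas an SSK-fixed point is closed only under $\pat$-consequences of \emph{full} Strong Kleene satisfaction $\vsk$; the two come apart exactly at $\Phi:=\forall v\,\vphi$ from Proposition~\ref{failure of omega logic}. Indeed the universal clause of $\xi$ reduces $\xi(\corn{\Phi},X)$ to $\forall z\,(\#\vphi(z)\in X)$, which holds because every instance lies in $X$; yet $\Phi\notin X$ and $\pat\nvdash\Phi$. For clause (4) I would instantiate $\mrm{IT3}$ at $x=y=\corn{\Phi}$: the third disjunct of the antecedent is witnessed by $z=\corn{\Phi}$, since $\xi(\corn{\Phi},X)$ holds and $\pat\vdash\Phi\ra\Phi\vee\Phi$, so the antecedent is true; but $\#(\Phi\vee\Phi)\in X$ is impossible, for it would yield some $\chi$ with $X\vsk\chi$ and $\pat\vdash\chi\ra\Phi\vee\Phi$, whence $\pat\vdash\chi\ra\Phi$ and so $\Phi\in\mrm{SSK}(X)=X$, a contradiction. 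Clause (5) is identical with the disjunction replaced by $\exists v(\Phi\wedge v=v)$, using $\pat\vdash\Phi\ra\exists v(\Phi\wedge v=v)$ to make the antecedent true and $\pat\vdash\exists v(\Phi\wedge v=v)\ra\Phi$ to extract the same contradiction from the consequent. The delicate points are the $\xi$-computation for $\Phi$ and keeping $\xi$ rigorously separate from $\vsk$; once these are in place the $\pat$-closure contradiction is routine.

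The concluding consequence then follows at once: each clause produces a member of the relevant class of fixed points that refutes the displayed axiom, so by the right-to-left direction of $\nat$-categoricity~\eqref{eq:ncat}---every fixed point of the class must be a model---no theory containing that axiom can be $\nat$-categorical for that class, with (1)--(2) bearing on $\Theta$ and (3)--(5) on $\mrm{SSK}$.
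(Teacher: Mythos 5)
Your proposal is correct, and for clauses (1)--(3) it coincides with the paper's proof: (1) and (2) are read off from Proposition \ref{failures of theta} (equivalently, Corollary \ref{failure of classical soundness}), and (3) is a direct reading of Proposition \ref{failure of omega logic}. The one genuine divergence is in (4)--(5), and there your choice is actually the more careful one. The paper takes $z$ to be the code of $\mrm{TI}(\tau,\varepsilon_0)$ itself and asserts $(\nat,S)\vDash\xi(z,\T)$ ``by inspecting the proof''; but officially $\mrm{TI}(\tau,\varepsilon_0)$ is a disjunction $\neg\mrm{Prog}(\tau)\vee(\forall x<\varepsilon_0)\tau(x)$, and neither disjunct belongs to $\mrm{I_{ssk}}$, so the disjunction clause of $\xi$ does not literally apply; one has to reconstrue the witness as a universally quantified sentence. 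You do exactly that from the start: taking $z=\corn{\Phi}$ with $\Phi=\forall v\,\vphi$ from Proposition \ref{failure of omega logic}, the universal clause of $\xi$ applies directly because all instances lie in $X$, and then $\pat\vdash\Phi\ra\Phi\vee\Phi$ (resp.\ $\pat\vdash\Phi\ra\exists v(\Phi\wedge v=v)$ and its converse, $v$ not free in $\Phi$) makes the antecedent true while the $\pat$-closure of $\mrm{SSK}(X)=X$ refutes the consequent -- the same padding trick as the paper, with a witness that verifiably satisfies $\xi$. Two small points: the universal clause of $\xi$ quantifies over all closed-term instances, whereas Proposition \ref{failure of omega logic} only delivers the numeral instances; this is bridged by noting that $\mrm{SSK}$-fixed points are closed under $\pat$-consequence (so $\#\vphi(t)\in X$ whenever $\#\vphi(\bar n)\in X$ with $t^\circ=n$), and you should make that explicit. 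Finally, your concluding pairing -- (1)--(2) blocking $\nat$-categoricity for $\Theta$, (3)--(5) for $\mrm{SSK}$ -- is the intended one; note that the proposition's closing sentence in the paper states the two classes the other way around, which must be a typo, since $\mrm{PK}^+$ contains $\T$-$\mrm{Out}$ and is $\nat$-categorical for $\mrm{SSK}$ while classical soundness holds for all $\mrm{SSK}$-fixed points.
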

\begin{proof}
(1) follows from Proposition \ref{failures of theta}.1. (2) follows from Proposition \ref{failures of theta}.2. (3) follows from Proposition \ref{failure of omega logic}. (4) follows by letting $z$ be the code for the formula $\mrm{TI}(\tau,\varepsilon_0)$ as given in Proposition \ref{failure of omega logic}, and $x\subdot \vee y$ be the code for $\mrm{TI}(\tau,\varepsilon_0)\vee \mrm{TI}(\tau,\varepsilon_0)$: by inspecting the proof of that proposition, it is easy to see that $(\nat,S)\vDash \xi(z, \T)$. Finally, (5) follows, just like (4), by letting $w$ be the code of $\mrm{TI}(\tau,\varepsilon_0)$ and $\subdot \exists v x$ be the code for $\exists x_0 (\mrm{TI}(\tau,\varepsilon_0) \wedge x_0>0)$. 
\end{proof}

Although, by \ref{Tout} and \ref{Tnegdel} of Proposition \ref{failures wrt it}, $\Theta$ cannot be categorically axiomatized by $\mrm{IT}$, some fixed points of $\Theta$ are still models for $\mrm{IT}$. In particular, all fixed points that are classically sound, $\T$-downwards closed, and $\neg \T$-downwards closed, will be models of $\mrm{IT}$ (cf.~Proposition \ref{prop:theta-star_n-cat-it}). Since these notions will play a prominent role in what follows, we provide precise definitions for them. 
\begin{dfn}\label{dfn:classound_downnwards_closure}
Let $S\subseteq \omega$, then 
\begin{enumerate}
\item $S$ is \emph{classically sound} iff $(\nat,S)\vDash\varphi$ for all $\#\varphi\in S$;
\item $S$ is \emph{$\T$-downwards closed} iff $\#\T\corn{\varphi}\in S$ implies $\#\varphi\in S$;
\item $S$ is  \emph{$\neg \T$-downwards closed} iff $\#\neg\T\corn\varphi\in S$ when $\#\neg\varphi\in S$.
\end{enumerate}
\end{dfn}
\noindent Proposition \ref{failures of theta} showed that some $\Theta$ fixed points are not $\T$-downwards closed, and hence not classically sound, and some are not $\neg\T$-downwards closed. This being said, some $\Theta$ fixed points meet all the conditions: they are classically sound, $\T$- and $\neg\T$-downwards closed. One such fixed point is the minimal fixed point.

\begin{prop}\label{prop:itheta_models_it}
$\mrm{I}_\Theta\vDash \indt$.
\end{prop}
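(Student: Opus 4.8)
The plan is to exploit the fixed-point equation $\mrm{I}_\Theta=\Theta(\mrm{I}_\Theta)$ together with the fact, recorded just before the statement, that the minimal fixed point $\mrm{I}_\Theta$ is classically sound, $\T$-downwards closed, and $\neg\T$-downwards closed in the sense of Definition \ref{dfn:classound_downnwards_closure}. From these I would first extract three auxiliary facts. First, $\mrm{I}_\Theta$ is \emph{consistent}: if $\#\varphi,\#\neg\varphi\in\mrm{I}_\Theta$ then classical soundness gives both $(\nat,\mrm{I}_\Theta)\vDash\varphi$ and $(\nat,\mrm{I}_\Theta)\vDash\neg\varphi$, which is absurd. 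Second, $\mrm{I}_\Theta\subseteq\mrm{Sent}$. Third, $(\nat,\mrm{I}_\Theta)\vDash\pat$, since the first-order part is evaluated in the standard model with full induction. With these in hand, the verification of $\indt$ splits into the axioms that are immediate from the operator and the axioms that require the closure properties.

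The axioms that flow directly from $\mrm{I}_\Theta=\Theta(\mrm{I}_\Theta)$ and the clauses of $\xi$ are IT2--IT5, IT8, IT10, the arithmetic-to-$\T$ direction of IT1, and the left-to-right directions of IT6 and IT7. For these I use that membership of a sentence in $\mrm{I}_\Theta$ is equivalent to being a $\pat$-provable consequence of something satisfying $\xi(\cdot,\mrm{I}_\Theta)$. Thus if $\#\varphi\in\mrm{I}_\Theta$ then $\xi(\#\T\corn\varphi,\mrm{I}_\Theta)$ holds by the $\subdot\T$-clause, whence $\#\T\corn\varphi\in\mrm{I}_\Theta$, giving IT6 left-to-right; the $\subdot\neg\subdot\T$-clause handles IT7 left-to-right identically, and the conjunction and universal clauses give IT2 and IT4. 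The biconditionals IT3 and IT5 are the cleanest point: their third disjunct $\exists z(\xi(z,\T)\wedge\mrm{Pr}_{\pat}(z\subdot\ra x\subdot\vee y))$ is a transcription of the defining condition of $\Theta$, so right-to-left is just the fixed-point equation, while left-to-right follows since each disjunct forces $x\subdot\vee y$ (resp.\ $\subdot\exists vx$) into $\Theta(\mrm{I}_\Theta)$ via the appropriate $\xi$-clause or directly. IT8 uses transitivity of $\pat$-provability: if $\#x\in\mrm{I}_\Theta$ is witnessed by $\psi$ with $\xi(\#\psi,\mrm{I}_\Theta)$ and $\mrm{Pr}_{\pat}(\#\psi\subdot\ra x)$, then composing with $\mrm{Pr}_{\pat}(x\subdot\ra y)$ yields $\#y\in\Theta(\mrm{I}_\Theta)=\mrm{I}_\Theta$. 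For IT10 I check that every instance of $\forall x(\T(x)\ra\mrm{Sent}(x))$ lies in $\mrm{I}_\Theta$ (using $\mrm{True}_0$ for sentence codes, and the $\neg\mrm{Sent}$ disjunct of the $\neg\T$-clause for non-sentence codes), so the universal $\xi$-clause places the sentence itself in $\mrm{I}_\Theta$.

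The remaining axioms use the closure properties directly. $\T$-$\mrm{Out}$ and the $\T$-to-arithmetic direction of IT1 are exactly classical soundness applied to instances; IT9 is consistency; the right-to-left direction of IT6 is $\T$-downwards closure; and the right-to-left direction of IT7 is $\neg\T$-downwards closure — the direction that genuinely fails for arbitrary $\Theta$-fixed points by items (1)--(2) of Proposition \ref{failures of theta}, and which must therefore be extracted from minimality. Here I use $\mrm{I}_\Theta\subseteq\mrm{Sent}$ to dispatch the $\neg\mrm{Sent}$ disjunct and consistency to pass from $\#\neg\varphi\in\mrm{I}_\Theta$ to $\#\varphi\notin\mrm{I}_\Theta$.

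The hard part is not this axiom-by-axiom bookkeeping but the establishment of classical soundness and the two downwards-closure properties for the \emph{minimal} fixed point. The natural route is a transfinite induction on the approximants $\Theta^\alpha(\varnothing)$ (which converge by Proposition \ref{theta is always sound} and monotonicity of $\Theta$), proving simultaneously that each stage is consistent, contained in $\mrm{Sent}$, classically sound relative to $(\nat,\mrm{I}_\Theta)$, and suitably downwards closed. The delicacy, and the reason the invariants must be carried together, is that classical satisfaction is not monotone in the interpretation of $\T$: the $\neg\T$-clause of $\xi$ refers \emph{negatively} to the truth set, so soundness of a stage at a $\neg\T$-atom depends on the global consistency of $\mrm{I}_\Theta$, while consistency of the successor stage in turn depends on the soundness already secured (via the fact, recorded in the footnote to Proposition \ref{failures of theta}, that the Strong Kleene truths of a consistent set are classically true in it). I expect this entanglement of soundness and consistency across the inductive clauses to be the main obstacle; once it is navigated, every appeal to the closure properties above goes through unchanged.
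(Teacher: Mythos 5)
Your axiom-by-axiom bookkeeping is sound, and it matches the paper's implicit accounting: granted that $\mrm{I}_\Theta$ is consistent, classically sound, and $\T$- and $\neg\T$-downwards closed, the axioms $\mrm{IT1}$--$\mrm{IT5}$, $\mrm{IT8}$--$\mrm{IT10}$ and the left-to-right halves of $\mrm{IT6}$/$\mrm{IT7}$ fall out of the fixed-point equation, while $\T$-$\mrm{Out}$, $\mrm{IT9}$ and the right-to-left halves of $\mrm{IT6}$/$\mrm{IT7}$ come from the closure properties. The genuine gap lies in the part you yourself flag as ``the hard part'': the proposed transfinite induction on the approximants $\Theta^\alpha(\varnothing)$ does not go through, because membership at a successor stage is generated by \emph{arbitrary} $\pat$-derivations from $\xi$-grounded witnesses, and none of your invariants pass through that black box. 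Concretely, for $\neg\T$-downwards closure: if $\#\neg\T\corn{\varphi}\in\Theta^{\alpha+1}$, the witness is some $\psi$ with $\xi(\#\psi,\Theta^\alpha)$ and $\pat\vdash\psi\ra\neg\T\corn{\varphi}$, where $\psi$ need not be $\neg\T\corn{\varphi}$ nor decompose along the $\xi$-clauses, and the inductive hypotheses tell you nothing about the structure of the classical proof. Note also that classical soundness, even once secured, only yields $\#\varphi\notin\mrm{I}_\Theta$ here, not $\#\neg\varphi\in\mrm{I}_\Theta$: what is needed is a completeness-type property of the minimal fixed point. Moreover, your simultaneous induction is not well-founded where it matters: soundness of stage $\alpha$ at a $\neg\T$-atom requires $t^\circ\notin\mrm{I}_\Theta$, a fact about the limit (hence about stages beyond $\alpha$), so carrying consistency ``together with'' soundness does not break the circle you correctly diagnose.

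What fills the gap --- and this is the paper's actual route --- is a proof-theoretic detour. One characterizes $\mrm{I}_\Theta$ as the set of theorems of Cantini's infinitary calculus $\mrm{SV}_\infty$, i.e., $\mrm{SV}_\infty\vdash\varphi\Leftrightarrow\#\varphi\in\mrm{I}_\Theta$: the left-to-right direction is \cite[Lemma 4.4]{stern_2018}, and the right-to-left direction is proved by induction on the stages of the construction with a subinduction on positive complexity --- this is where your $\pat$-derivation problem is solved once and for all, since finitary $\pat$-proofs embed into $\mrm{SV}_\infty$. The rules $\T$-Elim and $\neg\T$-Elim are then \emph{admissible} in $\mrm{SV}_\infty$ by Cantini's invertibility analysis --- a cut-elimination-style argument that is exactly the ``extraction from minimality'' you were after, and which has no purely semantic, stage-by-stage replacement --- and these admissible rules deliver the $\T$- and $\neg\T$-downwards closure of $\mrm{I}_\Theta$, hence $\mrm{IT6}$ and $\mrm{IT7}$. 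Finally, soundness of $\mrm{SV}_\infty$ with respect to the model of its own theorems, proved by induction on the length of derivations, yields classical soundness (hence consistency and $\T$-$\mrm{Out}$). So your decomposition of the problem is right, but the inductive engine you propose cannot be made to run; the missing idea is the embedding into the infinitary sequent calculus with its admissible elimination rules.
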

\begin{proof}[Proof]

We define the Tait language $\lt^\textsc{Tait}$ as follows: given a formula  $A$ of $\lt$, we take the negation-normal form of $A$, i.e., negations are only allowed on literals, and double negations are eliminated from literals. Hence, any formula of $\lt^\textsc{Tait}$ is the result of taking literals or negated literals and applying the connectives $\wedge, \vee, \forall, \exists$. We can safely assume that $\lt$ and $\lt^\textsc{Tait}$ are extensionally equivalent, modulo the definition of negation. 

Now take the system $\mrm{SV}_\infty$, the infinitary (one-sided) sequent calculus in the language $\lt^\textsc{Tait}$ given by the relation $\sststile{\rho}{\alpha}\Gamma$, whose axioms and rules are displayed in Table \ref{tab:svinf}.
\begin{table}[t]
\rule{\linewidth}{0.4pt}
\begin{align*}
&\sststile{\rho}{\alpha}\Gamma,\varphi,\text{for $\varphi$ a literal of $\lnat$} \tag{Ax. 1};\\
&\sststile{\rho}{\alpha}\Gamma, \T (t),\neg \T (s), \text{ with $\# s^\nat=\# t^\nat$} \tag{Ax. 2}\\
&\AxiomC{$\sststile{\rho}{\alpha_0}\Gamma,\varphi_i, \varphi_0\vee \varphi_1$ \text{ for $i\in\{0,1\}$}}\RightLabel{($\vee$)}
    \UnaryInfC{$\sststile{\rho}{\alpha}\Gamma,\varphi_0\vee \varphi_1$}
        \DisplayProof
&&
    \AxiomC{$\sststile{\rho}{\alpha_0}\Gamma,\varphi_0, \varphi_0\wedge A_1$}\RightLabel{($\vee$)}
    \AxiomC{$\sststile{\rho}{\alpha_0}\Gamma,\varphi_1, \varphi_0\wedge \varphi_1$}\RightLabel{($\wedge$)}
    \BinaryInfC{$\sststile{\rho}{\alpha}\Gamma,\varphi_0\wedge \varphi_1$}
        \DisplayProof
\\
&
    \AxiomC{$\sststile{\rho}{\alpha_0}\Gamma,\exists x \varphi, \varphi(\ovl{n})\;\;\text{for some $n\in\omega$}$}\RightLabel{($\exists$)}
    \UnaryInfC{$\sststile{\rho}{\alpha}\Gamma,\exists x \varphi$}
        \DisplayProof
&&
    \AxiomC{$\sststile{\rho}{\alpha_i}\Gamma,\forall x \varphi, \varphi(\ovl{n})\;\;\text{for all $n\in\omega$ and some $\alpha_i<\alpha$}$}\RightLabel{($\omega$)}
    \UnaryInfC{$\sststile{\rho}{\alpha}\Gamma,\forall x \varphi$}
        \DisplayProof
\\
    &
        \AxiomC{$
        \sststile{\rho_0}{\alpha_0} \varphi$}\RightLabel{{\sc $\T$-intro}}
            \UnaryInfC{$\sststile{\rho}{\alpha}\Gamma,\T\corn{\varphi}$}
                \DisplayProof
    &&
        \AxiomC{$\sststile{\rho_0}{\alpha_0}\neg \varphi$}\RightLabel{{\sc $\neg\T$-intro}}
            \UnaryInfC{$\sststile{\rho}{\alpha}\Gamma,\neg \T\corn{\varphi}$}
                \DisplayProof
\end{align*}
\rule{\linewidth}{0.4pt}
\caption{$\mrm{SV}_\infty$}\label{tab:svinf}
\end{table}
We write $\mrm{SV}_\infty\vdash \vphi$ iff there are $\alpha,\rho\in \mrm{On}$ s.t. $\sststile{\rho}{\alpha}\vphi$. Cantini showed that the following two rules are admissible, given the invertibility properties of the system: 
\begin{align*}
    &
        \AxiomC{$
        \mrm{SV}_\infty\vdash\T\corn{\vphi}$}\RightLabel{{\sc $\T$-Elim}}
            \UnaryInfC{$ \mrm{SV}_\infty\vdash \vphi$}
                \DisplayProof
    &&
        \AxiomC{$\mrm{SV}_\infty\vdash\neg \T\corn{A}$}\RightLabel{{\sc $\neg\T$-Elim}}
            \UnaryInfC{$ \mrm{SV}_\infty\vdash\neg A$}
                \DisplayProof
\end{align*}
One can then show that, for any formula $\vphi$, $\mrm{SV}_\infty\vdash \vphi \Leftrightarrow \#\vphi\in \mrm{I}_\Theta$. The left-to-right direction is proved in \cite[Lemma 4.4]{stern_2018}. The right-to-left direction is proven by induction on the stages of the construction of $\mrm{I}_\Theta$, with a subinduction on the positive complexity of the $\vphi$.  Finally, by induction on the length of the derivation in $\mrm{SV}_\infty$, one can show that $\mrm{SV}_\infty\vdash \vphi$ implies $(\nat, \{\psi \sth \mrm{SV}_\infty\vdash \psi\})\vDash \vphi$. This suffices to prove that $(\nat, \mrm{I}_\Theta)$ is a model for axioms IT6, IT7 and $\T$-Out. Verifying the rest is immediate. 
\end{proof}

\section{The $\nat$-categoricity of SSK}\label{sec:n-cat ssk}

One major difficulty in achieving an $\nat$-categorical axiomatization for $\mrm{SSK}$-fixed points is the inexpressibility, in first-order logic, of the notion of (Strong Kleene) satisfaction built in its definition. The satisfaction relation is hyperarithmetical, and hence inexpressible in $\lt$. This does not exclude, however, that there may be ways of capturing enough of Strong-Kleene satisfaction to obtain an extensionally correct characterization of $\mrm{SSK}$-fixed points. In this section we provide one such axiomatization.

 By the diagonal lemma, $\pat$ proves that there is a formula $\pi(x)$ such that: 
\begin{align*}
    \pi(x)\lra & \mrm{True}_0(x)\vee\\
            & \exists t( x=\subdot \T t\land \T\val{t})\,\vee\\
             & \exists t( x=\subdot \neg\subdot \T t\land (\T\subdot \neg \val{t}\vee \neg \mrm{Sent}_{\lt}(t^\circ))\,\vee\\
             &\exists y (x=\subdot \neg\subdot \neg y \land \T\corn{\pi(\dot{y})})\,\vee\\
             &\exists y,z(x=(y\subdot\land z)\land \T\corn{\pi(\dot{y})})\land \T\corn{\pi(\dot{z})})\\
             &\exists y,z(x=\subdot \neg(y\subdot \land z)\land \T\corn{\pi(\subdot \neg\dot{y})})\land \T\corn{\pi(\subdot \neg\dot{z})})\\
             &\exists y (x=\subdot \forall v y \land \forall t\,\T\corn{\pi(\dot{y}(t/v)})\,\vee\\
             &\exists y (x=\subdot \neg \subdot \forall v y \land \forall t\,\T\corn{\pi(\subdot \neg\dot{y}(t/v)}).
\end{align*}

Thus, the formula $\pi(x)$ performs a standard iteration for truth ascriptions, but exploits the diagonal lemma to yield Strong Kleene compositionality in the case of logically complex formulae -- in much the same way that the well-known definition of $\mrm{KF}$ over the theory $\mrm{PUTB}$ does (see, e.g., \cite[Ch.19]{halbach_2014}). 

The following result shows that, in a certain class of models, $\pi(x)$ captures Strong-Kleene satisfaction:

\begin{lemma}\label{putb and eta}
Any $S\subseteq \mrm{Sent}_{\lt}$ with $(\nat,S)\vDash \forall x (\T\corn{\pi(\dot x)}\lra \pi(x))$ is such that 
\begin{equation*}
    S\vDash \pi(\corn{\vphi}) \,\text{ iff }\,(\nat,S, S')\vDash_{\mrm{sk}} \vphi
\end{equation*}
\end{lemma}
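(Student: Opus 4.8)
The plan is to prove the biconditional by induction on the positive (negation-normal) build-up of $\vphi$, using the standing hypothesis $(\nat,S)\vDash\forall x(\T\corn{\pi(\dot x)}\lra\pi(x))$ to untie the self-reference built into $\pi$ and turn it into a genuine structural recursion on $\vphi$. First I would record that $(\nat,S)\vDash\pat$ for every $S\subseteq\mrm{Sent}_{\lt}$, since over the standard model full first-order $\lt$-induction holds for any interpretation of $\T$; hence the $\pat$-provable fixed-point equivalence defining $\pi$ is true in $(\nat,S)$ and may be used freely. The point of the hypothesis is that every recursive sub-clause occurring in that equivalence has one of the shapes $\T\corn{\pi(\dot y)}$, $\T\corn{\pi(\subdot\neg\dot y)}$, or $\T\corn{\pi(\dot y(t/v))}$, and by the biconditional each such sub-clause holds in $(\nat,S)$ exactly when $\pi$ itself holds of the corresponding immediate (negated, resp.\ instantiated) subformula. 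Since both the defining equivalence for $\pi$ and the Strong Kleene clauses are themselves biconditionals, the desired equivalence propagates through each case, so the two directions are established simultaneously. The interpretation of the statement I adopt is that the third coordinate $S'$ is the Strong Kleene anti-extension $S^-$ determined by $S$ as in Section~\ref{supervaluations}, so that the right-hand side is $(\nat,S,S^-)\vsk\vphi$.

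For the base cases I would match the non-recursive clauses of $\pi$ directly to the Strong Kleene leaves: an arithmetic literal makes $\pi(\corn\vphi)$ reduce to $\mrm{True}_0(\corn\vphi)$, which agrees with the (classical, $S$-independent) SK value of a literal; the clause for $\subdot\T t$ reduces to $\T\val t$, i.e.\ $\val t\in S$, matching satisfaction of $\T t$ via the extension. The only place where the anti-extension enters is the clause for $\subdot\neg\subdot\T t$: here $(\nat,S)\vDash\pi(\corn{\neg\T t})$ reduces to $\T\subdot\neg\val t\vee\neg\mrm{Sent}_{\lt}(\val t)$, that is, to $\#\neg(\val t)\in S$ or $\val t\notin\mrm{Sent}_{\lt}$, which is precisely the condition $\val t\in S^-$ by the definition of $S^-$; this matches the SK clause declaring $\neg\T t$ satisfied iff $\val t$ lies in the anti-extension. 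In the inductive cases I would use the hypothesis to rewrite each $\T\corn{\pi(\dot y)}$ sub-clause as a recursive call of $\pi$ on the relevant subformula and then invoke the induction hypothesis: the clause for $y\subdot\land z$ yields the SK conjunction clause, the De Morgan clause for $\subdot\neg(y\subdot\land z)$ the SK clause for the negation, the clause for double negation collapses to the subformula, and the clauses for $\subdot\forall vy$ and $\subdot\neg\subdot\forall vy$ yield the generalised SK quantifier clauses. For the quantifier cases I would also record the routine fact that for a closed term $t$ the Strong Kleene value of $\psi(t/v)$ equals that of $\psi(\overline{\val t}/v)$, so that $\pi$'s quantification over all closed terms agrees with SK quantification over numerals.

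The main obstacle is conceptual rather than computational: because $\pi$ is obtained from the diagonal lemma and is genuinely self-referential rather than given by an honest positive induction, its classical evaluation need not, a priori, track any structural recursion on $\vphi$ at all, and the whole argument hinges on the hypothesis $(\nat,S)\vDash\forall x(\T\corn{\pi(\dot x)}\lra\pi(x))$ being exactly what collapses one layer of self-reference into one step of the Strong Kleene recursion. The single delicate verification is that this collapse is complexity-decreasing, which forces the complexity to be measured in negation-normal form so that the De Morgan clauses for $\subdot\neg(y\subdot\land z)$ and $\subdot\neg\subdot\forall vy$, which descend into $\neg\psi,\neg\chi$ and $\neg\psi(t/v)$ rather than into literal subformulae, count as legitimate descent. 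A secondary point to get exactly right is the alignment of the $\subdot\neg\subdot\T t$ clause with the definition of $S^-$, together with the observation that classical evaluation of $\pi$ never reintroduces a genuine classical negation on the right-hand side, since in $\pi$ negations survive only at the literal and $\T$-ascription leaves; this is what keeps the correspondence between the classical reading of $\pi$ and the three-valued reading of $\vphi$ clean despite Strong Kleene truth-value gaps.
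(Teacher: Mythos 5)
Your proposal is correct and follows essentially the same route as the paper's own proof: an induction on the positive complexity of $\vphi$, using the hypothesis $\forall x (\T\corn{\pi(\dot x)}\lra \pi(x))$ to collapse each self-referential sub-clause $\T\corn{\pi(\dot y)}$ into a recursive call of $\pi$ on the relevant subformula, with the base cases matched directly to the Strong Kleene clauses (including reading the statement's $S'$ as the anti-extension $S^-$, as the paper's proof itself does). Your write-up is somewhat more explicit than the paper's -- e.g.\ on the closed-term/numeral alignment in the quantifier case and on why the descent is complexity-decreasing in negation-normal form -- but these are elaborations of the same argument, not a different one.
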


\begin{proof}
One needs to induct on the positive complexity of $\vphi$. Suppose that $\vphi$ is an arithmetic literal. Then

\begin{center}
$S\vDash \pi(\corn{\vphi})$ iff $\#\vphi \in \mrm{True}_0$ iff $\nat\vDash \vphi$ iff $(\nat, S, S^-)\vDash_\mrm{sk} \vphi$.
\end{center}
The case for $\vphi$ of the form $\T t$ is also straightforward: 

\begin{center}
$S\vDash \pi(\corn{\T t})$ iff $t^\circ \in S$ iff $(\nat, S, S^-)\vDash_\mrm{sk} \T t$.
\end{center}
When $\vphi$ is of the form $\neg \T t$, the argument is analogous. If $\vphi$ is a logically complex formula, the claim follows almost immediately by IH. Take the case of $\vphi$ being $\neg\neg\psi$: 
\begin{equation*}
    S\vDash \pi(\corn{\neg\neg\psi})\Leftrightarrow S\vDash \T\corn{\pi(\corn{\psi})} \Leftrightarrow S\vDash \pi(\corn{\psi}) \Leftrightarrow(\nat, S, S^-)\vDash_\mrm{sk} \psi \Leftrightarrow (\nat, S, S^-)\vDash_\mrm{sk} \neg \neg \psi 
\end{equation*}
where the third biconditional follows evidently by IH. 
\end{proof}

For future use, we recall that, for $\T$-positive formulae, Strong-Kleene satisfaction and classical satisfaction coincide. 
\begin{lemma}\label{sk and classical for t positive}
For $\vphi$ $\T$-positive: $(\nat,S, S^-)\vDash_\mrm{sk}\vphi$ iff $S\vDash \vphi$.
\end{lemma}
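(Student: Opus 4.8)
The plan is to prove the biconditional by induction on the positive complexity of $\vphi$, exactly as in the proof of Lemma \ref{putb and eta}, but now exploiting the restriction to $\T$-positive formulae. The statement to establish is that for every $\T$-positive $\vphi$, we have $(\nat,S,S^-)\vDash_\mrm{sk}\vphi$ iff $S\vDash\vphi$, where the left-hand side is the three-valued Strong Kleene satisfaction relation with positive extension $S$ and anti-extension $S^-$, and the right-hand side is ordinary two-valued classical satisfaction with $\T$ interpreted by $S$. The key conceptual point is that $\T$-positivity means the predicate $\T$ occurs only under an even number of negations, so the truth value of $\vphi$ never depends on the anti-extension $S^-$; this is precisely what forces Strong Kleene and classical evaluation to agree.

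First I would handle the base cases. For arithmetic literals the two relations agree trivially since both reduce to $\nat\vDash\vphi$, the arithmetic vocabulary being fully classical. For an atom $\T t$ (which is $\T$-positive), both $(\nat,S,S^-)\vDash_\mrm{sk}\T t$ and $S\vDash\T t$ amount to $t^\circ\in S$, so they coincide. Crucially, because $\vphi$ is $\T$-positive, a negated truth atom $\neg\T t$ can occur only as part of a larger subformula in which $\T$ sits under an odd number of negations overall is excluded; more carefully, when we push to negation-normal form the only literals involving $\T$ that appear are positive atoms $\T t$, never $\neg\T t$. This is the structural fact I would make explicit before the induction, since it is what rules out the one case where Strong Kleene and classical logic diverge (namely a truth value gap arising from $t^\circ\notin S$ and $t^\circ\notin S^-$).

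Next I would run the inductive step over the connectives $\wedge,\vee,\forall,\exists$. Since the Strong Kleene clauses for these connectives, restricted to the classical truth values $\{0,1\}$ (with no gaps appearing, by the base-case observation), are identical to the classical clauses, each case follows immediately from the induction hypothesis applied to the immediate subformulae, which are themselves $\T$-positive. For instance, $(\nat,S,S^-)\vDash_\mrm{sk}\psi_0\vee\psi_1$ iff one of the disjuncts is Strong Kleene satisfied iff (by IH) one is classically satisfied iff $S\vDash\psi_0\vee\psi_1$, and dually for conjunction and the quantifiers.

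I do not expect any serious obstacle here: the content of the lemma is essentially the standard fact that Strong Kleene and classical logic agree on formulae that never produce a gap, and $\T$-positivity guarantees gap-freeness because gaps can only be introduced through the truth predicate, whose anti-extension $S^-$ is never consulted in a $\T$-positive context. The only point requiring care is being precise about the negation-normal-form reduction so that ``$\T$-positive'' is correctly translated into ``no negated truth literal occurs,'' ensuring the anti-extension plays no role; once that is pinned down, the induction is routine and parallels Lemma \ref{putb and eta}.
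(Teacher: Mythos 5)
Your proposal is correct and takes essentially the same route as the paper's proof: both induct on positive complexity, handle arithmetic literals and $\T t$ as base cases, observe that $\T$-positivity excludes the literal $\neg\T t$ (the only clause where the anti-extension $S^-$ would be consulted), and note that the Strong Kleene and classical satisfaction clauses coincide for $\wedge,\vee,\forall,\exists$. One minor gloss: $\T$-positive formulae can still be gappy (e.g.\ $\T\corn{\tau}$ with $\#\tau\notin S\cup S^-$), but this is harmless, since your inductive step in fact uses only the agreement of the satisfaction clauses under the induction hypothesis, not gap-freeness.
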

\begin{proof}
We induct on the positive complexity of $\vphi$ again. If $\vphi$ is an arithmetic literal or negated literal, the claim is clear. If $\vphi$ is of the form $\T t$, then $(\nat, S, S^-)\vDash_\mrm{sk}\T t$ iff $t^\circ \in S$ iff $S\vDash \T t$. The case in which $\vphi$ is of the form $\neg \T t$ does not arise, since $\vphi$ would not be $\T$-positive. The inductive case follows immediately from the IH, since the SK satisfaction clauses for all connectives (except negation) and classical satisfaction coincide. 
\end{proof}

\begin{dfn}
Let $\mrm{PK}$ be the theory 
\begin{align}
\tag{$\mrm{TB}\pi$} & \forall x (\T\corn{\pi(\dot x)}\lra \pi(x))\\
\tag{$\mrm{PK}$-$\T$} & \T (x) \lra \exists y (\pi(y)\land \mrm{Pr}_{\pat}(y\subdot \ra x)). 
\end{align}
\end{dfn}
As the following lemma shows, the theory $\mrm{PK}$ has similar compositional properties as the theory $\mrm{IT}$ with the exception that the truth predicate does not commute with the universal quantifier.
\begin{lemma}\label{PK-theorem}
 The theory $\mrm{PK}$ proves the following sentences:
\begin{enumerate}
\item $\forall x(\T(\subdot\T (x))\leftrightarrow\T (x))$;
\item $\forall x(\T(\subdot{\neg}\subdot\T( x)) \leftrightarrow\T(\subdot{\neg} x)\vee\neg\mrm{Sent}(x))$;
\item $\forall x(\mrm{Sent}(x)\ra(\T(\subdot{\neg}\subdot{\neg}x)\leftrightarrow\T (x)))$;
\item $\forall x,y(\mrm{Sent}(x\subdot{\wedge}y)\rightarrow(\T (x\subdot{\wedge}y)\leftrightarrow\T (x)\wedge\T (y)))$;
\item $\forall x,y(\mrm{Sent}(x\subdot{\vee}y)\rightarrow (\T (x)\vee \T (y)\vee \exists z(\pi(z)\wedge\mrm{Pr}_{\pat}(z\subdot \rightarrow x\subdot{\vee}y ))\leftrightarrow \T(x\subdot{\vee}y)))$;
\item $\forall x,v(\mrm{Sent}(\subdot\forall vx)\rightarrow(\T(\subdot{\forall} vx)\rightarrow\forall t\T (x(t/v))))$;
\item $\forall x,y(\mrm{Sent}(\subdot{\exists}vx)\rightarrow (\exists t\T(x\, (t/v))\vee \exists w(\pi(w)\wedge\mrm{Pr}_{\pat}(w\subdot{\rightarrow} \subdot{\exists}vx ))\leftrightarrow \T( \subdot{\exists}vx)))$.
\end{enumerate}
\end{lemma}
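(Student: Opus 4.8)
The plan is to argue inside $\mrm{PK}$, reducing all seven clauses to a handful of reusable facts. By the diagonal lemma $\pat$ proves the defining equivalence for $\pi$; feeding this to the axiom $(\mrm{TB}\pi)$, which lets one rewrite each $\T\corn{\pi(\dot y)}$ as $\pi(y)$, yields, provably in $\mrm{PK}$, the genuine compositional clauses for $\pi$ — e.g.\ $\pi(y\subdot\wedge z)\lra\pi(y)\wedge\pi(z)$ and $\pi(\subdot\forall vy)\lra\forall t\,\pi(y(t/v))$ — together with the two \emph{collapse} clauses (C1) $\pi(\subdot\T x)\lra\T(x)$ and (C2) $\pi(\subdot\neg\subdot\T x)\lra\T(\subdot\neg x)\vee\neg\mrm{Sent}(x)$, obtained by instantiating the $\T$-atom clauses at $t=\mrm{num}(x)$ so that $\val t=x$. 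Two further facts will be used throughout: (F1) $\pi(x)\ra\T(x)$, immediate from $(\mrm{PK}\text{-}\T)$ since $\pat\vdash\mrm{Pr}_{\pat}(x\subdot\ra x)$; and (F2) closure under provable implication, $\T(x)\wedge\mrm{Pr}_{\pat}(x\subdot\ra y)\ra\T(y)$, which holds because a witness $z$ for $\T(x)$ composes with $\mrm{Pr}_{\pat}(x\subdot\ra y)$ via the provable transitivity of $\subdot\ra$.

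With these in hand, clauses (3)--(7) are routine. Clause (3) is (F2) in both directions via $\pat\vdash\subdot\neg\subdot\neg x\subdot\ra x$ and its converse. Clause (4) uses (F2) with $\pat\vdash x\subdot\wedge y\subdot\ra x$ for the left-to-right direction, and for the converse combines witnesses $z_1,z_2$ into $z_1\subdot\wedge z_2$ using $\pi(z_1\subdot\wedge z_2)\lra\pi(z_1)\wedge\pi(z_2)$ and provable $\wedge$-introduction on the two implications. In (5) and (7) the existential disjunct is literally $(\mrm{PK}\text{-}\T)$ unfolded at $x\subdot\vee y$ (resp.\ $\subdot\exists vx$), which gives the direction from $\T$ to the disjunction; the converse is handled disjunct-by-disjunct, the existential disjunct re-folding by $(\mrm{PK}\text{-}\T)$ and $\T(x),\T(y)$ (resp.\ $\exists t\,\T(x(t/v))$) yielding the target by (F2) with $\pat\vdash x\subdot\ra x\subdot\vee y$ (resp.\ $\pat\vdash x(t/v)\subdot\ra\subdot\exists vx$). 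Clause (6) is stated only in the left-to-right direction: from a $\pi$-witness for $\T(\subdot\forall vx)$ and $\pat\vdash\subdot\forall vx\subdot\ra x(t/v)$ one gets $\T(x(t/v))$ for every $t$ by (F2); the converse is deliberately absent, being exactly commutation with $\forall$, which fails for want of $\omega$-closure (cf.\ the remark after Proposition \ref{failure of omega logic}).

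The substance is in clauses (1) and (2). By (C1) and (C2) they are equivalent to $\T(\subdot\T x)\lra\pi(\subdot\T x)$ and $\T(\subdot\neg\subdot\T x)\lra\pi(\subdot\neg\subdot\T x)$ respectively. For a $\T$-literal $c$ the direction $\pi(c)\ra\T(c)$ is just (F1), so composing with the collapse clauses already settles the easy halves: $\T(x)\ra\T(\subdot\T x)$ in (1) and $\T(\subdot\neg x)\vee\neg\mrm{Sent}(x)\ra\T(\subdot\neg\subdot\T x)$ in (2). What remains is the converse collapse $\T(c)\ra\pi(c)$ for $\T$-literals $c$: if a $\T$-literal is a $\pat$-consequence of some Strong-Kleene truth $y$ (some $y$ with $\pi(y)$), then it is \emph{itself} Strong-Kleene true.

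I expect this converse collapse to be the main obstacle, since in general $\pi$ is not closed under provable implication — indeed $\T$ is by design the provable-closure of $\pi$ — yet closure must be recovered precisely for $\T$-literal conclusions. The route is to internalize the \emph{soundness} of Strong-Kleene truth: a $\pi$-true sentence is classically correct on a consistent hypothesis (the content of Lemma \ref{sk and classical for t positive} and the footnote to Proposition \ref{failures of theta}), so if $\pi(y)$ and $\mrm{Pr}_{\pat}(y\subdot\ra\subdot\T x)$ then $x$ is forced into the extension, giving $\T(x)$ for clause (1). For clause (2) soundness alone yields only that $x$ is \emph{not} in the extension; to force $\subdot\neg x$ \emph{into} it one additionally invokes the \emph{monotonicity} of $\pi$: were $x$ a gap, adjoining it to the hypothesis would, by monotonicity, preserve $\pi(y)$ and hence the classical correctness of $y$, contradicting $\mrm{Pr}_{\pat}(y\subdot\ra\subdot\neg\subdot\T x)$. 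Internalizing these two semantic principles in $\mrm{PK}$ — feasible because $\pat$ carries full induction, so that the positive-complexity inductions underlying Lemmas \ref{putb and eta}--\ref{sk and classical for t positive} can be formalized — is the crux; granting them, (1) and (2) close and the lemma follows.
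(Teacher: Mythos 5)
The paper itself states this lemma without proof, treating it as routine, so there is no official argument to measure you against; judged on its merits, your scaffolding is correct and does most of the work. Facts (F1), (F2), the collapse clauses (C1)--(C2), and the $\mrm{TB}\pi$-derived compositional clauses for $\pi$ are all properly obtained; they dispose of clauses (3)--(7) and of the easy halves of (1)--(2) exactly as you say, and your reduction of the hard halves to the converse collapse $\T(c)\ra\pi(c)$ for $\T$-literals $c$ is the right diagnosis.

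The gap is your closing ``internalization'' step, and it is a genuine one. The argument you sketch is irreducibly model-theoretic: ``classically correct on a consistent hypothesis'' appeals to classical satisfaction for arbitrary $\lt$-sentences, which by Tarski's theorem is not definable in $\lt$, so Lemmas \ref{putb and eta} and \ref{sk and classical for t positive} --- whose very statements mention $\vDash$ and $\vDash_{\mrm{sk}}$ --- have no $\lt$-formalization for the full induction of $\pat$ to act on; and ``adjoining $x$ to the hypothesis'' re-interprets $\T$, which is a model construction, not a move available inside a $\mrm{PK}$-derivation. As written, your argument shows only that (1) and (2) hold in all \emph{standard} models of $\mrm{PK}$ (where they amount to classical soundness of $\mrm{SSK}$ fixed points), not that $\mrm{PK}$ \emph{proves} them. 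What a derivation must exploit instead is that $\T$ is uninterpreted inside $\mrm{Pr}_{\pat}$. For (1) there is a short syntactic route you miss: the diagonal equivalence for $\pi$ is provable in $\pat$ alone, so $\pat\vdash\forall v(\T(v)\ra\pi(\subdot\T v))$; formalizing this yields $\mrm{Pr}_{\pat}(\subdot\T x\subdot\ra\corn{\pi(\subdot\T \dot x)})$, so from $\T(\subdot\T x)$ your (F2) gives $\T\corn{\pi(\subdot\T \dot x)}$, and now $\mrm{TB}\pi$ --- the one axiom that converts $\T$ of a $\pi$-sentence back into $\pi$ --- gives $\pi(\subdot\T x)$, whence $\T(x)$ by (C1). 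For (2) this trick does not transfer ($\pat\nvdash\neg\T(v)\ra\pi(\subdot\neg\subdot\T v)$, precisely because $\T$ is uninterpreted), and your monotonicity idea would have to be simulated by formalized predicate substitution into $\mrm{Pr}_{\pat}$ --- replacing $\T(v)$ by $\T(v)\vee v=x$, which turns the conclusion $\neg\T(\dot x)$ into a $\pat$-refutable sentence --- together with an internal monotonicity lemma for $\pi$ and the internal consistency of $\T$ (which your sketch also tacitly assumes and which itself needs the $\mrm{TB}\pi$-mediation just described). None of this is supplied, so clauses (1) and (2) remain open in your proposal.
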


Now, we establish the $\nat$-categoricity results for $\mrm{SSK}$ fixed points:

\begin{thm}\label{thm:n-categoricity_ssk}
For $S\subseteq \mrm{Sent}_{\lt}$, and $\mrm{CON}(S)$, $\mrm{SSK}(S)=S$ iff $(\nat, S)\vDash \mrm{PK}$. 
\end{thm}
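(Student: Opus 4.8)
The plan is to read the two axioms of $\mrm{PK}$ through Lemma \ref{putb and eta} and observe that the biconditional then becomes a transcription of the definition of the $\mrm{SSK}$ jump. In any model $(\nat,S)$ validating the Tarski-style axiom $(\mrm{TB}\pi)$, Lemma \ref{putb and eta} guarantees that $\pi$ internalizes Strong-Kleene satisfaction, i.e.\ $S\vDash\pi(\corn{\vphi})$ iff $S\vsk\vphi$. Moreover, since the first-order part of $(\nat,S)$ is standard, $\mrm{Pr}_{\pat}$ is computed correctly, so $(\nat,S)\vDash\mrm{Pr}_{\pat}(\corn{\psi}\subdot\ra\corn{\vphi})$ iff $\pat\vdash\psi\ra\vphi$. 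With these two facts, the axiom $(\mrm{PK}\text{-}\T)$ reads, for $x=\corn{\vphi}$: $\#\vphi\in S$ iff there is a sentence $\psi$ with $S\vsk\psi$ and $\pat\vdash\psi\ra\vphi$ -- which is exactly $\#\vphi\in\mrm{SSK}(S)$. A small bookkeeping remark is needed here: the witness $y$ in $(\mrm{PK}\text{-}\T)$ is forced to be the code of a sentence, since $\mrm{Pr}_{\pat}(y\subdot\ra\corn{\vphi})$ can hold only when $y\subdot\ra\corn{\vphi}$ codes a provable, hence closed, formula.

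For the direction from $(\nat,S)\vDash\mrm{PK}$ to $\mrm{SSK}(S)=S$, I would simply run this computation: both axioms hold, so $\pi$ captures $\vsk$ by Lemma \ref{putb and eta}, and the displayed equivalence gives $\#\vphi\in S$ iff $\#\vphi\in\mrm{SSK}(S)$ for every sentence $\vphi$. Since $S\subseteq\mrm{Sent}_{\lt}$ by hypothesis and $\mrm{SSK}(S)$ consists solely of codes of sentences, this yields $S=\mrm{SSK}(S)$.

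The substantial direction, and the main obstacle, is the converse: from $\mrm{SSK}(S)=S$ (with $\mrm{CON}(S)$) I must first establish $(\mrm{TB}\pi)$ before Lemma \ref{putb and eta} is even available. I would prove its two halves separately, writing $\vphi_n:=\pi(\ovl{n})$. For the implication $\T\corn{\pi(\dot x)}\ra\pi(x)$: if $\#\vphi_n\in S=\mrm{SSK}(S)$, unfolding the jump yields a $\psi$ with $S\vsk\psi$ and $\pat\vdash\psi\ra\vphi_n$; since $\mrm{CON}(S)$, the fact that for consistent $S$ one has $S\vsk\psi\Rightarrow S\vDash\psi$ gives $S\vDash\psi$, and as $(\nat,S)\vDash\pat$ (full induction holds over the standard model for any interpretation of $\T$) soundness delivers $S\vDash\vphi_n$. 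For the converse implication $\pi(x)\ra\T\corn{\pi(\dot x)}$, I use that $\pi$ is $\T$-positive -- every occurrence of the predicate $\T$ in the diagonal body sits under \emph{dotted}, not logical, negations -- so that Lemma \ref{sk and classical for t positive} gives $S\vDash\vphi_n$ iff $S\vsk\vphi_n$; then the trivial witness $\psi:=\vphi_n$, with $\pat\vdash\vphi_n\ra\vphi_n$, places $\#\vphi_n$ in $\mrm{SSK}(S)=S$.

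Once $(\mrm{TB}\pi)$ is secured, $(\mrm{PK}\text{-}\T)$ follows by exactly the computation of the first paragraph, now read from right to left using $\mrm{SSK}(S)=S$; instances of $x$ that are not sentence codes are handled trivially, since there both $\T(x)$ and the right-hand side of $(\mrm{PK}\text{-}\T)$ fail. I expect the only delicate points to be the two opposite passages between satisfaction relations: the appeal to $\T$-positivity of $\pi$ (Lemma \ref{sk and classical for t positive}), which licenses going from classical to Strong-Kleene satisfaction in closing $(\mrm{TB}\pi)$, and the consistency hypothesis, which enters precisely and only in the reverse passage from Strong-Kleene to classical satisfaction.
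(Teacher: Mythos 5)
Your proposal is correct and follows essentially the same route as the paper: you establish $\mrm{TB}\pi$ at an $\mrm{SSK}$ fixed point by the classical soundness of $\vssk$ over consistent $S$ in one direction and by the $\T$-positivity of $\pi$ (Lemma \ref{sk and classical for t positive}) together with the trivial witness $\psi:=\pi(\ovl{n})$ in the other, which is exactly the paper's Lemma \ref{ssk fps and putb}, and you then read ($\mrm{PK}$-$\T$) through Lemma \ref{putb and eta} as a transcription of the $\mrm{SSK}$ jump in both directions, matching the paper's two-way computation. Your added bookkeeping remarks (the witness in ($\mrm{PK}$-$\T$) being a sentence code, non-sentence instances of $x$ failing on both sides) are harmless elaborations the paper leaves implicit.
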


We first prove (almost) one direction as a lemma: 

\begin{lemma}\label{ssk fps and putb}
Let $S\subseteq \mrm{Sent}_{\lt}$ be such that $\mrm{CON}(S)$ and $S=\mrm{SSK}(S)$. Then, $S\vDash \mrm{TB}\pi$.
\end{lemma}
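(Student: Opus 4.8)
The plan is to verify $\mrm{TB}\pi$ pointwise. For each $a\in\omega$ I want to show $\#\pi(\bar a)\in S$ iff $(\nat,S)\vDash\pi(\bar a)$, which is exactly the instance $(\nat,S)\vDash\T\corn{\pi(\bar a)}\lra\pi(\bar a)$ of the universal axiom, since $(\nat,S)\vDash\T\corn{\pi(\bar a)}$ holds precisely when $\#\pi(\bar a)\in S$. The single structural fact driving everything is that the defining formula $\pi(v)$ is $\T$-\emph{positive}: every occurrence of the truth predicate in its fixed-point equivalence lies under no logical negation (the $\subdot\neg$'s are syntactic, and $\neg\mrm{Sent}_{\lt}(t^\circ)$ is purely arithmetical). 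Hence each sentence $\pi(\bar a)$ is $\T$-positive, so Lemma \ref{sk and classical for t positive} applies to it, letting me pass between $(\nat,S,S^-)\vsk\pi(\bar a)$ and $S\vDash\pi(\bar a)$ freely.

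For the right-to-left direction I would assume $(\nat,S)\vDash\pi(\bar a)$, i.e.\ $S\vDash\pi(\bar a)$. Since $\pi(\bar a)$ is $\T$-positive, Lemma \ref{sk and classical for t positive} gives $S\vsk\pi(\bar a)$. Because $\pat\vdash\pi(\bar a)\ra\pi(\bar a)$ trivially, the definition \eqref{eq:sskst} of $\vssk$ yields $S\vssk\pi(\bar a)$, so $\#\pi(\bar a)\in\mrm{SSK}(S)=S$. This direction uses only $\T$-positivity and the fixed-point hypothesis, not consistency.

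For the left-to-right direction I would unpack the fixed point: if $\#\pi(\bar a)\in S=\mrm{SSK}(S)$, then by \eqref{eq:sskst} there is a $\psi$ with $S\vsk\psi$ and $\pat\vdash\psi\ra\pi(\bar a)$. Here consistency is the crucial ingredient: by the footnote fact that $S\vsk\chi$ implies $S\vDash\chi$ for consistent $S$, I obtain $S\vDash\psi$, i.e.\ $(\nat,S)\vDash\psi$. Since $(\nat,S)\vDash\pat$ — the induction scheme for $\lt$-formulae holds automatically in the standard model under any interpretation of $\T$ — soundness transfers the provable conditional to $(\nat,S)\vDash\psi\ra\pi(\bar a)$, and modus ponens delivers $(\nat,S)\vDash\pi(\bar a)$, as required.

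\textbf{The main obstacle.} There is no real computational difficulty; the delicate point is routing each direction through the correct satisfaction relation. In particular, the left-to-right direction cannot appeal to $\T$-positivity of $\pi$ directly, because $\mrm{SSK}$ only supplies an auxiliary $\psi$ tied to $\pi(\bar a)$ by a $\pat$-provable conditional, not $\pi(\bar a)$ itself; this is exactly where the consistency of $S$ (via the Strong-Kleene-to-classical collapse) is indispensable, which explains why the hypothesis $\mrm{CON}(S)$ is needed and, presumably, why the lemma proves only this ``(almost) one direction'' of Theorem \ref{thm:n-categoricity_ssk}. I would finally double-check that the equivalence is genuinely uniform in $a$, holding for arbitrary $a$ rather than merely codes of sentences; this is immediate, since $\pi(\bar a)$ is a legitimate $\T$-positive sentence for every $a$.
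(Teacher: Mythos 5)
Your proof is correct and takes essentially the same route as the paper's: both verify $\mrm{TB}\pi$ instance by instance, getting $\T\corn{\pi(\bar n)}\ra\pi(\bar n)$ from the classical soundness of $\mrm{SSK}$ over consistent sets (which you unpack via the Strong-Kleene-to-classical collapse and the soundness of $\pat$ in $(\nat,S)$, where the paper simply cites this fact), and the converse from the $\T$-positivity of $\pi(\bar n)$ via Lemma \ref{sk and classical for t positive} together with the fixed-point hypothesis $S=\mrm{SSK}(S)$. Your closing remark correctly locates where $\mrm{CON}(S)$ enters, matching the paper's use of it.
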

\begin{proof}
We check that $S\vDash \T\corn{\pi(\ovl{n})}\lra \pi(\ovl{n})$ for all $n\in\omega$. The left-to-right direction follows for any $n$ directly from the fact that SSK satisfies classical soundness, that is, one can immediately verify that $S\vDash_\mrm{ssk}\pi(\ovl{n})\Ra S\vDash\pi(\ovl{n})$ (see also \cite[Lemma 3.7]{stern_2018}). For the right-to-left direction, note that, by lemma \ref{sk and classical for t positive}, $S\vDash\pi(\ovl{n})$ implies $S\vDash_\mrm{sk}\pi(\ovl{n})$, and so $S\vDash_\mrm{ssk}\pi(\ovl{n})$; so, since $S=\mrm{SSK}(S)$, $\#\pi(\ovl{n}) \in S$, and $S\vDash \T\corn{\pi(\ovl{n})}$.
\end{proof}

\begin{proof}[Proof of Theorem \ref{thm:n-categoricity_ssk}]
For the left-to-right direction: lemma \ref{ssk fps and putb} shows that $S\vDash \mrm{TB}\pi$. Now, we want to prove the axiom $\text{($\pk$-$\T$)}$. Assume $S\vDash \T \corn{\vphi}$. Then, since $S=\mrm{SSK}(S)$, there is some $\psi$ such that $S\vDash_\mrm{sk}\psi \; \& \; \mrm{PAT}\vdash \psi\ra \vphi$. By lemma \ref{putb and eta}, and since $(\nat, S)$ is a model of $\mrm{TB}\pi$, $S\vDash_\mrm{sk}\psi$ implies $S\vDash \pi(\corn{\psi})$. Further, since $(\nat, S)$ is a model of $\pat$, $S\vDash \exists y (\pi(y)\wedge \mrm{Pr}_{\pat}(y \subdot \ra \ulcorner \vphi\urcorner))$. The reverse reasoning yields the opposite direction of the axiom. 

For the right-to-left direction: we reason as follows:

\begin{align*}
    \#\vphi \in S & \Leftrightarrow S\vDash \T\corn{\vphi}\\
    & \Leftrightarrow S\vDash \exists \corn{\psi}(\pi(\corn{\psi}\wedge \mrm{Pr}_{\pat}(\corn{\psi\ra \vphi})) & \text{ by ($\pk$-$\T$)}\\
    & \Leftrightarrow S\vDash \pi(\corn{\chi}) \text{ and } S\vDash \mrm{Pr}_{\pat}(\corn{\chi\ra \vphi}) & \text{ for some $\chi\in \mrm{Sent}_\mrm{\lt}$}\\
    & \Leftrightarrow S\vDash_\mrm{sk} \chi \text{ and } \pat \vdash \chi\ra \vphi & \text{by lemma \ref{putb and eta}}\\
    & \Leftrightarrow \#\vphi \in \mrm{SSK}(S)
\end{align*}
\end{proof}

In a sense, $\pk$ amounts to the bare minimum required to establish its $\nat$-categoricity with respect to $\mrm{SSK}$. We can extend the theory with principles that are independently justified and sound with respect to the intended semantics, and yet retain $\nat$-categoricity. Principles that appear particularly natural are, on the one hand, a less artificial restriction of the naive $\T$-schema allowing disquotation for all $\T$-positive sentences; on the other, since the $\mrm{SSK}$-fixed points over $\nat$ are classically sound, it is plausible to reflect this in the axiomatization by adding the schema $\T\text{-}\mrm{Out}$ to it. 
\begin{dfn}
We call $\mrm{PK}^+$ the theory obtained by replacing, in $\mrm{PK}$,  $\mrm{TB}\pi$ with the schema 
\begin{align}
\tag{$\mrm{PUTB}$} & \T\corn{\vphi(\vec{t})}\lra \vphi(\val{\vec{t}})\,\,\text{ for $\vphi$ $\T$-positive}
\end{align}
and adding the schema $\T\text{-}\mrm{Out}$ to it. 
\end{dfn}
\noindent $\mrm{PK}^+$ is also $\nat$-categorical with respect to $\mrm{SSK}$. The argument above carries over with minor modifications: Lemma \ref{ssk fps and putb} still holds if one requires satisfiability of $\mrm{PUTB}$ instead of just $\mrm{TB}\pi$. This is enough, in the proof of Theorem \ref{PK-theorem}, to show that fixed points of $\mrm{SSK}$ indeed satisfy $\mrm{PUTB}$. 

Since $\mrm{PK}^+$ includes $\mrm{PUTB}$, it is proof-theoretically at least as strong as $\mrm{KF}$ or ramified truth up to $\varepsilon_0$. $\indt$ (see Def.~\ref{dn:ind truth}) is clearly an upper bound for $\pk^+$, but it's evidently not sharp as the former has the same proof-theoretic strength of the theory of inductive definitions $\mrm{ID}_1$ \cite[Cor. 6.7]{stern_2018}. It is an open question whether $\pk^+$ has precisely the strength of $\mrm{PUTB}$. In addition, a full proof-theoretic analysis of $\pk$ is lacking.

\section{The $\nat$-categorical axiomatization of $\Theta$, $\Theta^*$, and variations}\label{sec:n-cat theta}

\subsection{$\nat$-categorical axioms for $\Theta$}
By Proposition \ref{failures wrt it}, $\mrm{IT}$ isn't $\nat$-categorical with respect to the $\Theta$ fixed points, but are there other axiomatic theories which are $\nat$-categorical with respect to those fixed points? In this subsection, we answer that question in the affirmative. 

\begin{dfn}[$\mrm{IT}^-$]
The theory $\mrm{IT}^-$ extends $\pat$ with the following axioms:
\begin{enumerate}
	\item[$\mrm{IT1}$] $\forall s,t(\T(s\subdot{=}t)\leftrightarrow s^\circ=t^\circ)\wedge \forall s,t(\T(s\subdot{\neq} t)\leftrightarrow s^\circ\neq t^\circ)$
	\item[$\mrm{IT2}$] $\forall x,y(\mrm{Sent}(x\subdot{\wedge}y)\rightarrow (\T (x)\wedge \T (y)\rightarrow \T(x\subdot{\wedge}y)))$
	\item[$\mrm{IT3}$] $\forall x,y(\mrm{Sent}(x\subdot{\vee}y)\rightarrow (\T (x)\vee \T (y)\vee \exists z(\xi(z, \T)\wedge\mrm{Pr}_{\pat}(z\subdot \rightarrow x\subdot{\vee}y ))\leftrightarrow \T(x\subdot{\vee}y)))$
	\item[$\mrm{IT4}$]$\forall v,x (\mrm{Sent}(\subdot{\forall}vx)\rightarrow (\forall t\T(x\, (\dot t/v))\rightarrow\T(\subdot{\forall} v x)))$
	\item[$\mrm{IT5}$]$\forall x,y(\mrm{Sent}(\subdot{\exists}vx)\rightarrow (\exists t\T(x\, (t/v)))\vee \exists w(\xi(w, \T)\wedge\mrm{Pr}_{\pat}(w\subdot{\rightarrow} \subdot{\exists}vx ))\leftrightarrow \T( \subdot{\exists}vx)))$
	\item[$\mrm{IT}^-6$]$\forall x(\T (x)\rightarrow \T(\subdot \T x))$
	\item[$\mrm{IT}^-7$] $\forall t(\T(\subdot{\neg}t) \vee \neg \mrm{Sent}(t)\ra \T (\subdot \neg \subdot \T t) )$
    \item[$\mrm{IT8}$]$\forall x,y(\T (x)\wedge \mrm{Pr}_{\pat}(x\subdot{\rightarrow}y)\rightarrow \T (y))$ 
	\item[$\mrm{IT9}$]$\forall x(\T (\subdot{\neg}x)\rightarrow \neg \T (x))$
	\item[$\mrm{IT10}$]$\T\ulcorner \forall x(\T ( x)\rightarrow \mrm{Sent}( x))\urcorner$
\end{enumerate}	
\end{dfn}

Thus, the major differences between $\mrm{IT}^-$ and $\mrm{IT}$ are simply the exclusion of T-Out and of the right-to-left directions of axioms IT6 and IT7. Note that
Axiom IT9 is provable from axioms IT1, IT2 and IT8.  

We shall prove: 

\begin{thm}\label{thm:n-categoricity_theta}
$\mrm{IT}^-$ is $\nat$-categorical with respect to fixed points of $\Theta$, i.e., $S=\Theta(S)\Leftrightarrow S\vDash \mrm{IT}^-$.
\end{thm}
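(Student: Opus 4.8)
The plan is to prove the theorem as a biconditional, establishing the two directions separately. For the right-to-left direction, I would assume $S = \Theta(S)$ and verify that $(\nat, S)$ satisfies each of the axioms $\mrm{IT1}$ through $\mrm{IT10}$ of $\mrm{IT}^-$. The guiding idea is that the operator $\Theta$ is, by its very definition via $\theta(x,X) :\lra \exists y(\xi(y,X) \wedge \mrm{Pr}_{\pat}(y \subdot\ra x))$, built to collect $\pat$-provable consequences of Strong-Kleene grounded sentences; so each compositional axiom should fall out by exhibiting the appropriate witness $y$ (i.e.~a sentence $\xi$-satisfied in $S$) together with the requisite $\pat$-proof. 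Concretely, for $\mrm{IT2}$ (conjunction), if $\#x, \#y \in S = \Theta(S)$, then there are witnesses $w_x, w_y$ with $\xi(w_x, S)$, $\xi(w_y, S)$ and $\pat$-proofs of $w_x \ra x$, $w_y \ra y$; the conjunction $w_x \subdot\wedge w_y$ is then $\xi$-satisfied (since $\xi$ has a conjunction clause) and $\pat$ proves $(w_x \wedge w_y) \ra (x \wedge y)$, giving $\#(x \subdot\wedge y) \in \Theta(S) = S$. The disjunction and existential axioms $\mrm{IT3}$ and $\mrm{IT5}$ are stated precisely so that their left-to-right content matches the definition of $\Theta$ directly (the $\exists z(\xi(z,\T) \wedge \mrm{Pr}_{\pat}(\ldots))$ disjunct is essentially unfolding $\Theta$), while their right-to-left content uses Proposition \ref{theta is always sound} and the monotonicity/closure of $\xi$.

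For the weakened self-referential axioms, $\mrm{IT}^-6$ and $\mrm{IT}^-7$ are only the left-to-right (respectively the stated) implications, which is exactly what is forced by the counterexamples in Proposition \ref{failures of theta}: since $\Theta$ fixed points need not be $\T$-downwards or $\neg\T$-downwards closed, we must not assert the converses, and indeed the one-directional versions are what $\Theta$ validates. For $\mrm{IT}^-6$, if $\#x \in S$ then using the $\subdot\T$ clause of $\xi$ one gets $\xi(\subdot\T(\mathrm{num}(x)), S)$ is available and $\pat$ proves the relevant implication to $\subdot\T x$. Axiom $\mrm{IT8}$ (closure under $\pat$-provable implication) is immediate from the transitivity of provability inside $\theta$: if $\#x \in \Theta(S)$ via witness $y$ and $\pat \vdash x \subdot\ra z$, then $\pat \vdash y \subdot\ra z$, so $\#z \in \Theta(S)$. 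Axiom $\mrm{IT9}$ is noted to follow from $\mrm{IT1}, \mrm{IT2}, \mrm{IT8}$, so it need not be checked directly; $\mrm{IT10}$ follows from the $\mrm{Sent}$-clause built into the $\neg\subdot\T$ case of $\xi$ together with provability of the relevant arithmetical fact.

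For the left-to-right direction I would assume $(\nat, S) \vDash \mrm{IT}^-$ and show $S = \Theta(S)$, which splits into $S \subseteq \Theta(S)$ and $\Theta(S) \subseteq S$. The inclusion $S \subseteq \Theta(S)$ is free from Proposition \ref{theta is always sound}, so the real work is $\Theta(S) \subseteq S$: given $\#x \in \Theta(S)$, there is a $y$ with $\nat \vDash \xi(y, S)$ and $\pat \vdash y \subdot\ra x$; by $\mrm{IT8}$ it suffices to show $\#y \in S$ whenever $\xi(y, S)$ holds. This last claim is the crux and I would prove it by induction on the positive build-up of the $\xi$-clauses, using the compositional axioms $\mrm{IT1}$--$\mrm{IT5}$, $\mrm{IT}^-6$, $\mrm{IT}^-7$, $\mrm{IT10}$ to climb from $\xi$-membership to $\T$-membership at each connective, truth, and quantifier step. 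I expect the main obstacle to be precisely this induction at the universal-quantifier clause ($\mrm{IT4}$) and the negated-truth clause ($\mrm{IT}^-7$): the universal case requires moving from $\forall t\, \T(x(\dot t / v))$ to $\T(\subdot\forall v x)$ and must be handled so that it does not covertly demand closure under the $\omega$-rule (which by Proposition \ref{failure of omega logic} and Proposition \ref{failures wrt it} is unavailable), while the $\neg\T$ clause must correctly track the $\mrm{Sent}$ side-condition. Care is also needed that the induction is on the correct (positive) syntactic measure so that negation-normal manipulations remain well-founded, mirroring the bookkeeping already used in the proof of Proposition \ref{prop:itheta_models_it}.
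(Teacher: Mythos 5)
Your proposal is correct and takes essentially the same route as the paper: one direction verifies each axiom by exhibiting $\xi$-witnesses combined with $\pat$-proofs (the paper defers to Stern's Lemma 5.6 for $\mrm{IT1}$--$\mrm{IT}^-7$ and notes $\mrm{IT8}$--$\mrm{IT10}$ as you do), while the converse uses Proposition \ref{theta is always sound} for $S\subseteq\Theta(S)$ and then proves $\xi(\corn{\psi},S)\Rightarrow\#\psi\in S$ by induction on positive complexity, finishing with $\mrm{IT8}$ -- exactly your crux. Two small corrections: you have swapped the names of the two directions, and your worry at the universal-quantifier step is misplaced, since $\Theta$ fixed points (unlike $\mrm{SSK}$ fixed points, which are what Propositions \ref{failure of omega logic} and \ref{failures wrt it} concern) \emph{are} closed under the $\omega$-rule, so in the induction one simply invokes $\mrm{IT4}$ with no obstacle.
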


\begin{proof}
For the left-to-right direction: for axioms $\mrm{IT1}$-$\mrm{IT}^-7$ the proof proceeds just like \cite[Lemma 5.6]{stern_2018}. IT8 follows from the fact that fixed points of $\Theta$ are closed under $\pat$-provability. As remarked, $\mrm{IT9}$ follows from $\mrm{IT1}, \mrm{IT2}$ and $\mrm{IT8}$. $\mrm{IT10}$ also follows from the definition of the $\Theta$ jump. 

For the right-to-left direction: we need to show that $S=\Theta(S)$, provided $S\vDash \mrm{IT}^-$. But since we know that $\Theta$ is sound for any choice of $S$, what remains to be shown is $\Theta(S)\subseteq S$. Assume $\#\vphi\in\Theta(S)$. Then,

\begin{equation*}
    \nat \vDash \exists \corn{\psi}(\xi(\corn{\psi},S) \wedge \mrm{Pr}_{\pat} (\corn{\psi\ra\vphi})).
\end{equation*}

We first show $\xi(\corn{\psi}, S)$ implies $\#\psi\in S$, which follows easily by an induction on the positive complexity of $\vphi$. If $\vphi$ is atomic or negated atomic, we make use of $\mrm{IT1}$; if it is of the form $\T t^\circ$ or $\neg \T t^\circ $, we make use of $\mrm{IT}^-6$ and $\mrm{IT}^-7$, respectively; if it is a conjunction, we make use of $\mrm{IT2}$; and so on. Once we have shown that $\xi(\corn{\psi}, S)\Ra \#\psi\in S$, $\#\vphi \in S$ follows by using $\mrm{IT8}$. 
\end{proof}

\subsection{$\Theta^*$ and $\nat$-categorical axioms for it}
Fixed points of $\Theta$ are not very attractive though. The reason is that they are not necessarily \emph{Kripke-sets} in the sense that $\#\vphi\in X \Leftrightarrow \#\T\corn{\vphi} \in X$ and $\#\neg\vphi\in X \Leftrightarrow \#\neg \T\corn{\vphi} \in X$. In particular, as we showed in Proposition \ref{failures of theta}, some fixed points will contain codes of sentences of the form $\T\corn{\vphi}$, while not containing the code of $\vphi$ -- and similarly for $\neg \T\corn{\vphi}$ and $\neg\vphi$. This phenomenon is reflected in the absence of the right-to-left directions of $\mrm{IT}^-6$ and $\mrm{IT}^-7$ in $\mrm{IT}^-$. However, by tweaking the formula $\xi(x,X)$ slightly, we can obtain a class of fixed points that are ($\nat$-categorically) axiomatized by a theory which restores the missing components of the original $\mrm{IT6}$ and $\mrm{IT7}$:
\begin{dfn}
Let $\xi^*(x,X)$ be the formula obtained by supplementing $\xi(x,X)$ from Definition \ref{arithmetical operator} with additional $\T$-downwards closure properties:
    \begin{align*}
& x\in \mathrm{True}_0\\
		&\vee  \exists y,z(x=(y\subdot{\vee} z) \wedge (y\in X\vee z\in X))\\
		&\vee  \exists y,z(x=(y\subdot{\wedge} z) \wedge (y\in X\wedge z\in X))\\
		&\vee  \exists y,v(x=(\subdot{\forall} vy) \wedge \forall t(y (t/v)\in X))\\
		&\vee  \exists y,v(x=(\subdot{\exists} vy) \wedge \exists t(y (t/v)\in X))\\
		&\vee  \exists t(x=\subdot \T (t) \wedge t^\circ \in X)\\
		&\vee  \exists t(x=\subdot \neg \subdot \T (t) \wedge (\subdot{\neg}t^\circ)\in X\vee \neg \mathrm{Sent}(t^\circ))\\
        &\vee  \exists t(x= t^\circ \wedge \mrm{Sent}(t^\circ) \wedge \subdot \T \,(t) \in X)\\
		&\vee  \exists t(x= \subdot \neg t^\circ \wedge \mrm{Sent}(\subdot\neg t^\circ) \wedge \subdot\neg\subdot \T \,(t) \in X)\\
\end{align*}
\end{dfn}

We then define: 
\begin{equation*}
    \Theta^*(S):= \{ n\in\omega \sth \nat \vDash \exists y (\xi^*(y,X) \wedge \mrm{Pr}_{\pat}(y\subdot\ra x))[n,S]\}
\end{equation*}
Note that, for any set $S$, $S$ is sound with respect to $\Theta^*$ -- just like with $\Theta$. However, it follows immediately from the definition of $\Theta^*$ that $\Theta^*$ fixed points are Kripke sets:
\begin{prop}
Let $X=\Theta^*(X)$. Then, $\#\vphi\in X \Leftrightarrow \#\T\corn{\vphi} \in X$ and $\#\neg\vphi\in X \Leftrightarrow \#\neg \T\corn{\vphi} \in X$.
\end{prop}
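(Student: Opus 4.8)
The plan is to prove each of the four implications separately, in every case by producing a single witnessing disjunct of $\xi^*$ and then discharging the provability side-condition in the definition of $\Theta^*$ via the trivial theorem $\pat\vdash\psi\ra\psi$. Throughout I would use that $X=\Theta^*(X)$, so that $\#\psi\in X$ is equivalent to the existence of a $\chi$ with $\nat\vDash\xi^*(\corn{\chi},X)$ and $\pat\vdash\chi\ra\psi$; in each case below I simply take $\chi:=\psi$. As is standing in this setting, I take $\vphi$ to range over sentences, so that the $\mrm{Sent}$-conditions appearing in the new clauses are automatically satisfied.

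For the forward directions I would use the clauses already present in $\xi$. To get $\#\vphi\in X\Ra\#\T\corn{\vphi}\in X$, instantiate $\exists t(x=\subdot\T(t)\wedge\val{t}\in X)$ with $t:=\corn{\vphi}$: since $\val{\corn{\vphi}}=\#\vphi\in X$, this gives $\nat\vDash\xi^*(\#\T\corn{\vphi},X)$, and with $\pat\vdash\T\corn{\vphi}\ra\T\corn{\vphi}$ we conclude $\#\T\corn{\vphi}\in\Theta^*(X)=X$. The implication $\#\neg\vphi\in X\Ra\#\neg\T\corn{\vphi}\in X$ is parallel, using $\exists t(x=\subdot\neg\subdot\T(t)\wedge(\subdot\neg\val{t}\in X\vee\neg\mrm{Sent}(\val{t})))$ with $t:=\corn{\vphi}$, where the first disjunct fires because $\subdot\neg\val{\corn{\vphi}}=\#\neg\vphi\in X$. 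Both directions thus already hold for $\Theta$ itself, which is consistent with the fact that the failure of the Kripke-set property for $\Theta$ lies entirely in the converse directions.

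The two converse directions are precisely what the extra clauses of $\xi^*$ were introduced to guarantee. For $\#\T\corn{\vphi}\in X\Ra\#\vphi\in X$ I would instantiate the new downwards-closure clause $\exists t(x=\val{t}\wedge\mrm{Sent}(\val{t})\wedge\subdot\T(t)\in X)$ with $t:=\corn{\vphi}$, so that $\val{t}=\#\vphi$, $\mrm{Sent}(\#\vphi)$ holds, and $\subdot\T(\corn{\vphi})=\#\T\corn{\vphi}\in X$; together with $\pat\vdash\vphi\ra\vphi$ this yields $\#\vphi\in X$. Symmetrically, $\#\neg\T\corn{\vphi}\in X\Ra\#\neg\vphi\in X$ follows from $\exists t(x=\subdot\neg\val{t}\wedge\mrm{Sent}(\subdot\neg\val{t})\wedge\subdot\neg\subdot\T(t)\in X)$ with $t:=\corn{\vphi}$. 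Since each argument is a direct instantiation, there is no genuine obstacle; the only care needed is with the routine syntactic identities (that $\subdot\T(\corn{\vphi})$ codes $\T\corn{\vphi}$, that $\val{\corn{\vphi}}=\#\vphi$, and the analogues for negation) and with checking that the $\mrm{Sent}$-conditions in the two new clauses are met under the standing assumption that $\vphi$ is a sentence. The proposition is therefore best read as a sanity check confirming that the supplementary clauses of $\xi^*$ do exactly their intended job.
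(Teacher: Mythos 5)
Your proof is correct and matches the paper's treatment: the paper states this proposition as immediate from the definition of $\Theta^*$, and your four clause-by-clause instantiations of $\xi^*$ (with the trivial $\pat\vdash\chi\ra\chi$ discharging the provability condition and the fixed-point identity $\Theta^*(X)=X$ closing each case) are exactly the verification being left implicit. Your side remark is also accurate — the two forward directions already hold for $\Theta$ via the $\T$- and $\neg\T$-introduction clauses of $\xi$, and it is precisely the converse directions that Proposition \ref{failures of theta} shows can fail for $\Theta$, which the two supplementary clauses of $\xi^*$ repair.
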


It is now easy to see what theory characterizes $\Theta^*$ fixed points $\nat$-categorically:
\begin{thm}\label{thm:n-cat_it_star}
Let $\mrm{IT}^*$ be $\mrm{IT}$ minus the axiom schema of T-Out, and replacing $\xi(x,X)$ in axioms $\mrm{IT3}$ and $\mrm{IT6}$ for $\xi^*(x,X)$. Then, $\mrm{IT}^*$ is $\nat$-categorical with respect to fixed points of $\Theta^*$, i.e., $S=\Theta^*(S)\Leftrightarrow S\vDash \mrm{IT}^*$.
\end{thm}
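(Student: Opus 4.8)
The plan is to mirror the structure of the proof of Theorem \ref{thm:n-categoricity_theta}, adapting it to the starred operator. Since $\mrm{IT}^*$ is obtained from $\mrm{IT}$ by deleting $\T\text{-}\mrm{Out}$ and replacing $\xi$ with $\xi^*$ in $\mrm{IT3}$ and $\mrm{IT6}$, the two directions should go through with the same overall shape, the crucial new ingredient being that $\xi^*$ bundles in the $\T$-downwards (and $\neg\T$-downwards) closure clauses that make $\Theta^*$ fixed points genuine Kripke sets.

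For the left-to-right direction, assume $S=\Theta^*(S)$ and verify each axiom of $\mrm{IT}^*$. Axioms $\mrm{IT1}$, $\mrm{IT2}$, $\mrm{IT4}$, $\mrm{IT8}$, $\mrm{IT9}$, $\mrm{IT10}$ are verified exactly as in \cite[Lemma 5.6]{stern_2018} and in the proof of Theorem \ref{thm:n-categoricity_theta}, since the relevant disjuncts of $\xi^*$ agree with those of $\xi$ and $\Theta^*$ fixed points are closed under $\pat$-provability by construction. The compositional axioms $\mrm{IT3}$ and $\mrm{IT5}$ involve $\xi^*$ (for $\mrm{IT3}$) and are read off from the corresponding disjunct of the $\Theta^*$-definition together with $\pat$-closure. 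The genuinely new work is at the full biconditionals $\mrm{IT6}$ and $\mrm{IT7}$: here I would use the added clauses of $\xi^*$, namely $\exists t(x=t^\circ\wedge\mrm{Sent}(t^\circ)\wedge\subdot\T(t)\in X)$ and its negated counterpart, which directly give $\#\T\corn{\vphi}\in S\Rightarrow\#\vphi\in S$ and $\#\neg\T\corn{\vphi}\in S\Rightarrow\#\neg\vphi\in S$. Combined with the forward directions (inherited from $\mrm{IT}^-6$, $\mrm{IT}^-7$), these yield the full biconditionals $\mrm{IT6}$ and $\mrm{IT7}$. Concretely, the Kripke-set Proposition immediately above already records $\#\vphi\in S\Leftrightarrow\#\T\corn{\vphi}\in S$ and $\#\neg\vphi\in S\Leftrightarrow\#\neg\T\corn{\vphi}\in S$, so I would simply invoke it.

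For the right-to-left direction, assume $S\vDash\mrm{IT}^*$ and show $S=\Theta^*(S)$. Soundness $S\subseteq\Theta^*(S)$ holds for every $S$ (as remarked, by the same $\vphi\wedge\vphi\ra\vphi$ argument as for $\Theta$), so it remains to prove $\Theta^*(S)\subseteq S$. Given $\#\vphi\in\Theta^*(S)$ we have $\nat\vDash\exists\corn{\psi}(\xi^*(\corn{\psi},S)\wedge\mrm{Pr}_{\pat}(\corn{\psi\ra\vphi}))$, and as in Theorem \ref{thm:n-categoricity_theta} it suffices to show $\xi^*(\corn{\psi},S)\Rightarrow\#\psi\in S$ by induction on the positive complexity of $\psi$, after which $\mrm{IT8}$ delivers $\#\vphi\in S$. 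The inductive steps for the disjuncts shared with $\xi$ use $\mrm{IT1}$--$\mrm{IT5}$ exactly as before; the two new disjuncts of $\xi^*$ are handled by the right-to-left directions of $\mrm{IT6}$ and $\mrm{IT7}$, which are precisely what was missing in $\mrm{IT}^-$ and which $\mrm{IT}^*$ restores.

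The main obstacle—though a mild one—is bookkeeping around the new $\xi^*$ disjuncts in the induction: one must check that the case $x=t^\circ$ with $\mrm{Sent}(t^\circ)$ and $\subdot\T(t)\in S$ really is covered by the full $\mrm{IT6}$ biconditional and not merely its forward half, and likewise that the negated-literal closure disjunct matches $\mrm{IT7}$ rather than only $\mrm{IT}^-7$. Since $\mrm{IT}^*$ restores exactly these two directions and nothing else changes relative to the $\mrm{IT}^-$/$\Theta$ argument, I expect the verification to be routine; most of the proof can be cited from Theorem \ref{thm:n-categoricity_theta} and \cite[Lemma 5.6]{stern_2018}, with the new content concentrated entirely in the $\mrm{IT6}$/$\mrm{IT7}$ cases.
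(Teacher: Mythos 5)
Your proposal is correct and matches the paper's intended argument exactly: the paper states Theorem \ref{thm:n-cat_it_star} without an explicit proof precisely because it is the routine adaptation of Theorem \ref{thm:n-categoricity_theta} that you describe --- verifying the shared axioms as before, using the definition of $\Theta^*$ for the $\xi^*$-clauses in $\mrm{IT3}$/$\mrm{IT5}$, and matching the two new downward-closure disjuncts of $\xi^*$ against the restored right-to-left directions of $\mrm{IT6}$ and $\mrm{IT7}$ (equivalently, the Kripke-set proposition) in both directions, with $\mrm{IT8}$ closing off the $\mrm{Pr}_{\pat}$ step and soundness $S\subseteq\Theta^*(S)$ holding for free. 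The only caveat, which you implicitly handle, is that the theorem's phrase ``replacing $\xi(x,X)$ in axioms $\mrm{IT3}$ and $\mrm{IT6}$'' is evidently a slip for $\mrm{IT3}$ and $\mrm{IT5}$, since $\mrm{IT6}$ contains no occurrence of $\xi$.
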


$\Theta^*$ does not impose any requirement on what the truth predicate looks like \textit{internally}. In a way analogous to that of the different supervaluation schemes, however, we can also require that, internally, the truth predicate be consistent, or consistent and complete. This is done by defining new operators:

\begin{dfn}
Let $\mrm{con}(x)$ be the formula $\exists s,t(x=\corn{\neg (\T \subdot s \wedge \T \subdot \neg \subdot t)}\wedge s^\circ=t^\circ)$, and define:
\begin{equation*}
    \Theta^*_{\mrm{c}}(S):= \{ n\in\omega \sth \nat \vDash \exists y ((\xi^*(y,X) \vee \mrm{con}(y)) \wedge \mrm{Pr}_{\pat}(y\subdot\ra x)) [n, S]\}
\end{equation*}
\end{dfn}

\begin{dfn}
Let $\mrm{com}(x)$ be the formula $\exists s,t(x=\corn{\neg \T \subdot s \lra \T \subdot \neg \subdot t}\wedge s^\circ=t^\circ)$, and define:
\begin{equation*}
    \Theta^*_{\mrm{mc}}(S):= \{ n\in\omega \sth \nat \vDash \exists y ((\xi^*(y,X) \vee \mrm{com}(y)) \wedge \mrm{Pr}_{\pat}(y\subdot\ra x))[n, S]\}
\end{equation*}
\end{dfn}

In light of our $\nat$-categoricity result for $\Theta^*$, the following is easily established: 

\begin{prop}
Let $\mrm{IT}^*_\mrm{c}$ be $\mrm{IT}^*$ plus the axiom $\forall x(\T\corn{\neg (\T \dot x \wedge \T \subdot \neg \dot x)})$, and let $\mrm{IT}^*_\mrm{mc}$ be $\mrm{IT}^*$ plus the axiom $\forall x(\T\corn{\neg \T \dot x \lra \T \subdot \neg \dot x})$. Then, $\mrm{IT}^*_\mrm{c}$ is $\nat$-categorical with respect to fixed points of $\Theta^*_\mrm{c}$, and $\mrm{IT}^*_\mrm{mc}$ is $\nat$-categorical with respect to fixed points of $\Theta^*_\mrm{mc}$.
\end{prop}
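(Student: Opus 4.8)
The plan is to piggyback entirely on the already-established $\nat$-categoricity of $\mrm{IT}^*$ with respect to $\Theta^*$ fixed points (Theorem \ref{thm:n-cat_it_star}), treating the two new cases as decorations of that result. First I would fix the consistent case $\mrm{IT}^*_\mrm{c}$ versus $\Theta^*_\mrm{c}$; the completeness case is formally identical, with $\mrm{con}$ replaced by $\mrm{com}$ and the disjunct in the operator adjusted accordingly. The key observation driving everything is that both $\Theta^*_\mrm{c}$ and $\Theta^*_\mrm{mc}$ are obtained from $\Theta^*$ by enlarging the ``base'' of arithmetically grounded sentences: instead of closing only $\xi^*(y,X)$ under $\pat$-provability, one closes $\xi^*(y,X)\vee\mrm{con}(y)$ (respectively $\xi^*(y,X)\vee\mrm{com}(y)$). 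Semantically, $\mrm{con}(y)$ holds exactly of codes of consistency statements $\neg(\T\dot s\wedge\T\subdot\neg\dot t)$ with $s^\circ=t^\circ$, so adjoining this disjunct forces every such sentence, and everything $\pat$-provable from it together with the $\xi^*$-grounded sentences, into the fixed point.

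The argument then splits cleanly into the two directions of the biconditional $S=\Theta^*_\mrm{c}(S)\Leftrightarrow S\vDash\mrm{IT}^*_\mrm{c}$. For the left-to-right direction I would note that a $\Theta^*_\mrm{c}$ fixed point is in particular closed under everything $\Theta^*$ would produce, and then verify the two clauses of $\mrm{IT}^*_\mrm{c}$ separately: the underlying $\mrm{IT}^*$ axioms hold because the proof of Theorem \ref{thm:n-cat_it_star} goes through verbatim (the only change to the operator is the extra consistency disjunct, which cannot remove any $\xi^*$-generated member and hence cannot break any of the $\mrm{IT}^*$ verifications), while the additional axiom $\forall x(\T\corn{\neg(\T\dot x\wedge\T\subdot\neg\dot x)})$ holds directly because for each $n$ the sentence $\neg(\T\bar n\wedge\T\subdot\neg\bar n)$ is witnessed by the $\mrm{con}$ disjunct (taking $s=t=\bar n$, so $s^\circ=t^\circ$ trivially), placing its code in $\Theta^*_\mrm{c}(S)=S$. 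For the right-to-left direction I would again invoke soundness, $S\subseteq\Theta^*_\mrm{c}(S)$, which holds for any $S$ just as for $\Theta^*$, and then show $\Theta^*_\mrm{c}(S)\subseteq S$. Assuming $\#\vphi\in\Theta^*_\mrm{c}(S)$, there is a $\psi$ with $(\xi^*(\corn\psi,S)\vee\mrm{con}(\corn\psi))$ and $\pat\vdash\psi\ra\vphi$; the $\xi^*$-case is dispatched exactly as in Theorem \ref{thm:n-cat_it_star} (showing $\xi^*(\corn\psi,S)\Ra\#\psi\in S$), and the $\mrm{con}$-case is dispatched by the new axiom, which guarantees $\#\psi\in S$ whenever $\psi$ is one of the consistency statements. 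Closure of $S$ under $\pat$-provability (from $\mrm{IT8}$) then yields $\#\vphi\in S$.

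The completeness case $\mrm{IT}^*_\mrm{mc}$ versus $\Theta^*_\mrm{mc}$ is handled the same way, substituting the biconditional completeness statement $\neg\T\dot x\lra\T\subdot\neg\dot x$ for the consistency conjunction throughout; the only point to check is that the new axiom $\forall x(\T\corn{\neg\T\dot x\lra\T\subdot\neg\dot x})$ matches precisely the sentences singled out by $\mrm{com}(y)$, which it does by inspection of the two definitions. I would present the whole thing as a single short proof noting that both results follow by ``the same reasoning as Theorem \ref{thm:n-cat_it_star}, supplemented by the observation that the extra disjunct in the operator is captured exactly by the extra axiom.'' The main obstacle, such as it is, is purely bookkeeping: making sure the coding in $\mrm{con}(x)$ and $\mrm{com}(x)$ (with the substitution dots $\dot x$, $\subdot\neg$, and the evaluation conditions $s^\circ=t^\circ$) lines up syntactically with the axiom schemata, so that the ``captured exactly'' claim is literally correct rather than merely morally so; beyond that, no genuinely new idea is required, since all the inductive and closure machinery is inherited from the prior theorem.
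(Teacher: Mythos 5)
Your proposal is correct and takes essentially the same approach as the paper, which gives no separate argument for this proposition but states that it is ``easily established'' in light of the $\nat$-categoricity of $\mrm{IT}^*$ with respect to $\Theta^*$ (Theorem \ref{thm:n-cat_it_star}); your reduction --- inheriting the $\mrm{IT}^*$ verifications, witnessing the extra axiom via the $\mrm{con}$/$\mrm{com}$ disjunct with $s=t=\bar{n}$, and dispatching the $\mrm{con}$/$\mrm{com}$ case of $\Theta^*_\mrm{c}(S)\subseteq S$ (resp.\ $\Theta^*_\mrm{mc}$) by the new axiom together with $\mrm{IT8}$ --- is exactly the intended unpacking. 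Your closing caveat about coding bookkeeping is also apt, since for non-numeral closed terms $s,t$ with $s^\circ=t^\circ$ one passes from the numeral instance supplied by the axiom to the $\mrm{con}$/$\mrm{com}$ instance via $\pat$-provable term identities and $\mrm{IT8}$, which is precisely the routine step your proof structure accommodates.
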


\subsection{Relationships between fixed points of $\Theta$ and $\Theta^*$}

We start by providing a general result about $\Theta$ and $\Theta^*$, which reveals that their constructions preserve classical soundness and, in the case of $\Theta$, also $\T$- and $\neg\T$-downwards closure: 

\begin{lemma}\label{lem:ttstarind}
The following hold:
\begin{enumerate}
    \item Let $X$ be a classically sound set of sentences, and define:
\begin{align*}
    &\Delta_0:=X\\
    &\Delta_{\alpha+1}:=\Theta^*(\Delta_\alpha)\\
    & \Delta_\lambda:=\bigcup_{\beta<\lambda}\Delta_\beta
\end{align*}
    Then, $\bigcup_{\alpha\in\mrm{On}}\Delta_\alpha$ -- the least $\Theta^*$ fixed point containing $X$ -- is classically sound.
    \item Let $X$ be a classically sound, $\T$- and $\neg\T$-downwards closed set of sentences, and define:  
\begin{align*}
    &\Gamma_0:=X\\
    &\Gamma_{\alpha+1}:=\Theta(\Gamma_\alpha)\\
    & \Gamma_\lambda:=\bigcup_{\beta<\lambda}\Gamma_\beta
\end{align*}
    Then, $\bigcup_{\alpha\in\mrm{On}}\Gamma_\alpha$  is classically sound, and $\T$- and $\neg\T$-downwards closed.
\end{enumerate}
\end{lemma}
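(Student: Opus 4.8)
The plan is to prove both parts by transfinite induction on the stages $\alpha$, since each of the three properties (classical soundness, $\T$-downwards closure, $\neg\T$-downwards closure) is preserved at limit stages essentially for free: each property is stated as a universal implication over codes in the set, and a code enters $\bigcup_\beta \Delta_\beta$ (resp.\ $\bigcup_\beta \Gamma_\beta$) only if it enters at some bounded stage, so the limit and successor cases are the only ones requiring work. The base case is given by hypothesis on $X$. Thus the real content is the successor step: showing that $\Theta^*$ (resp.\ $\Theta$) preserves the relevant properties when applied to a set already enjoying them.

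For part (1), the successor step requires: if $\Delta_\alpha$ is classically sound, then so is $\Theta^*(\Delta_\alpha)$. So suppose $\#\varphi \in \Theta^*(\Delta_\alpha)$; by definition there is some $\psi$ with $\nat \vDash \xi^*(\corn{\psi}, \Delta_\alpha)$ and $\pat \vdash \psi \ra \varphi$. The strategy is first to establish the auxiliary claim that $\nat \vDash \xi^*(\corn{\psi}, \Delta_\alpha)$ together with classical soundness of $\Delta_\alpha$ implies $\Delta_\alpha \vDash \psi$; this goes by induction on the positive complexity of $\psi$, checking each disjunct of $\xi^*$. The arithmetic-literal, conjunction, disjunction, and quantifier clauses are routine from the inductive hypothesis and the classical satisfaction clauses. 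The $\subdot\T$ and $\subdot\neg\subdot\T$ clauses use that $t^\circ \in \Delta_\alpha$ (resp.\ $\subdot\neg t^\circ \in \Delta_\alpha$) gives $\Delta_\alpha \vDash \T t$ (resp.\ $\Delta_\alpha \vDash \neg \T t$) directly from the interpretation of $\T$. The two \emph{new} $\T$-downwards-closure disjuncts of $\xi^*$ — where $x = t^\circ$ with $\subdot\T t \in \Delta_\alpha$, or $x = \subdot\neg t^\circ$ with $\subdot\neg\subdot\T t \in \Delta_\alpha$ — are exactly where classical soundness of $\Delta_\alpha$ is needed: from $\#\T t \in \Delta_\alpha$ and soundness we get $\Delta_\alpha \vDash \T t$, i.e.\ $t^\circ \in \Delta_\alpha$, whence $\Delta_\alpha \vDash \psi$. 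Once the auxiliary claim gives $\Delta_\alpha \vDash \psi$, classical soundness at the limit/union combined with $\pat$-provability of $\psi \ra \varphi$ and the fact that $(\nat, \Delta_\alpha)$ is a classical model yields $\Delta_\alpha \vDash \varphi$, and since satisfaction is monotone upward along the chain, $\bigcup_\alpha \Delta_\alpha \vDash \varphi$.

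For part (2), the successor step must additionally propagate $\T$- and $\neg\T$-downwards closure through $\Theta$. Here the argument for classical soundness is analogous but uses the \emph{weaker} $\xi$ (without the extra closure disjuncts), so the soundness of the hypothesis $\Gamma_\alpha$ is invoked in the $\subdot\neg\subdot\neg$ and compositional clauses rather than in closure clauses. For $\T$-downwards closure: suppose $\#\T\corn{\varphi} \in \Theta(\Gamma_\alpha)$; we want $\#\varphi \in \Theta(\Gamma_\alpha)$. Unwinding, there is $\psi$ with $\nat \vDash \xi(\corn{\psi}, \Gamma_\alpha)$ and $\pat \vdash \psi \ra \T\corn{\varphi}$. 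The delicate point — and the \textbf{main obstacle} — is that $\Theta$ lacks the built-in $\T$-closure disjuncts of $\Theta^*$, so one cannot simply read off $\#\varphi$ from a syntactic clause; instead one must use downwards closure of $\Gamma_\alpha$ itself together with the structure of $\xi$ to descend from $\corn{\T\corn{\varphi}}$-membership to $\corn{\varphi}$-membership, and show this descent is preserved by the $\pat$-provable implication. I expect the resolution to hinge on first proving the $\xi(\corn{\psi},\Gamma_\alpha) \Ra \#\psi \in \Gamma_\alpha$-style descent (as in the proof of Theorem~\ref{thm:n-categoricity_theta}) under the downwards-closure hypotheses, then handling the provability layer. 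The $\neg\T$ case is symmetric, using $\neg\T$-downwards closure and the $\subdot\neg\subdot\T$ clause of $\xi$. As before, all three properties pass to the union at limits, completing the induction.
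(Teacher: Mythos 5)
There is a genuine gap, and it sits exactly where you wave it through. Your bridge from $\Delta_\alpha\vDash\varphi$ to $\bigcup_\alpha\Delta_\alpha\vDash\varphi$ --- ``satisfaction is monotone upward along the chain'' --- is false for $\lt$-sentences that are not $\T$-positive, and the sentences produced here are typically not $\T$-positive: the $\subdot\neg\subdot\T$-clauses of $\xi$ and $\xi^*$ feed sentences of the form $\neg\T t$ into every stage, and, e.g., $(\nat,\varnothing)\vDash\neg\T\corn{0=0}$ while $(\nat,\{\#(0=0)\})\nvDash\neg\T\corn{0=0}$. Indeed, the persistence of exactly these negative truth-ascriptions along the chain is essentially what the lemma asserts, so assuming it is circular. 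Structurally, your successor step also proves the wrong invariant: you show that every member of $\Delta_{\alpha+1}$ is true in $(\nat,\Delta_\alpha)$, one model behind, whereas classical soundness of $\Delta_{\alpha+1}$ requires truth in $(\nat,\Delta_{\alpha+1})$; with your invariant the transfinite induction cannot even be continued, since the next successor step needs soundness of $\Delta_{\alpha+1}$ with respect to \emph{itself}. The paper's proof is set up precisely to avoid the transfer you appeal to: the subinduction on positive complexity evaluates satisfaction in $(\nat,\Delta_{\alpha+1})$ throughout, so the $\T\corn{\psi}$-case needs only monotonicity of \emph{membership} ($\#\psi\in\Delta_\alpha\subseteq\Delta_{\alpha+1}$), and the $\neg\T\corn{\psi}$-case is re-derived at the new stage from the \emph{consistency of} $\Delta_{\alpha+1}$ (giving $\#\psi\notin\Delta_{\alpha+1}$) rather than transferred from $\Delta_\alpha$. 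For the same reason your claim that classical soundness passes to limits ``essentially for free'' fails: membership at a bounded stage gives truth at that stage, not truth at the union; only the two downwards-closure properties are free at limits.

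On part (2) you also misplace the work. The descent you propose, in the style of the right-to-left direction of Theorem~\ref{thm:n-categoricity_theta} ($\xi(\corn{\psi},S)\Ra\#\psi\in S$), is proved there from the $\mrm{IT}^-$ axioms holding in a \emph{fixed point}; at an intermediate stage $\Gamma_\alpha$ of the iteration, $\xi(\corn{\psi},\Gamma_\alpha)$ only yields $\#\psi\in\Gamma_{\alpha+1}$, so this tool is unavailable, and there is no separate ``provability layer'' to handle. The paper's treatment is much lighter: rerun the soundness argument of (1) for $\Theta$, observe that the closure properties propagate directly (base case by hypothesis on $X$, successor by the induction hypothesis), and then note that on $\T$- and $\neg\T$-downwards closed sets $\xi$ and $\xi^*$ coincide, so the $\Theta$-iteration \emph{is} the $\Theta^*$-iteration. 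Note also the asymmetry your ``symmetric'' remark glosses over: $\T$-downwards closure of a stage is an immediate consequence of its classical soundness (from $\#\T\corn{\varphi}\in S$ and $(\nat,S)\vDash\T\corn{\varphi}$ one reads off $\#\varphi\in S$), whereas for the $\neg\T$ case soundness only yields $\#\varphi\notin S$ from $\#\neg\T\corn{\varphi}\in S$, not $\#\neg\varphi\in S$; that half genuinely needs the $\subdot\neg\subdot\T$-clause together with the induction hypothesis, not a mirrored argument.
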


\begin{proof}
For (1): we show that $\#\vphi \in \Delta_\alpha $ implies $ \Delta_\alpha\vDash \vphi$. The base case follows from the assumption that $X$ is classically sound.
For the successor case: we can assume that $\#\vphi \in \Delta_\alpha $ implies $ \Delta_\alpha\vDash \vphi$ and prove the claim for $\alpha+1$. Suppose that $\#\vphi \in\Delta_{\alpha+1}$ by $\xi^*(x,X)$. We induct on the positive complexity of $\vphi$: 
\begin{itemize}
    \item If $\vphi$ is in $\mrm{True}_0$, this is immediate. 
    \item If $\vphi$ is of the form $\T \corn{\psi}$, and $\#\psi \in \Delta_\alpha$: since $\Delta_\alpha\subseteq\Delta_{\alpha+1}$, $\psi\in\Delta_{\alpha+1}$, hence $\Delta_{\alpha+1}\vDash \T\corn{\psi}$.
    \item If $\vphi$ is of the form $\neg\T \corn{\psi}$, and $\#\neg\psi\in\Delta_{\alpha}$: then $\#\neg\psi\in\Delta_{\alpha+1}$ and, since $\Delta_{\alpha+1}$ is consistent, $\#\psi\notin\Delta_{\alpha+1}$, hence $\Delta_{\alpha+1}\vDash \neg \T\corn{\psi}$.
    \item If $\vphi$ is a logically complex formula, the claim follows easily by IH. 
\end{itemize}
If $\#\vphi\in\Delta_{\alpha+1}$ because there is $\psi$ such that $\xi^*(\corn{\psi}, \Delta_\alpha)$ holds and $\pat\vdash \psi\ra \vphi$: since $\Delta_{\alpha+1}\vDash \pat$, and we just showed how $\xi^*(\corn{\psi}, \Delta_\alpha)$ implies $\Delta_{\alpha+1}\vDash \psi$, the claim follows. 

For (2): to prove that $\bigcup_{\alpha\in\mrm{On}}\Gamma_\alpha$ is classically sound, we only need to run again the proof for (1) again. To prove the $\T$- and $\neg\T$-downwards closure, we need to verify that $\#(\neg)\T\corn{\vphi}\in\Gamma_\alpha\Ra\#(\neg)\vphi \in \Gamma_\alpha$. But this is immediate for the base case from our assumption that $X$ is $\T$ and $\neg\T$-downwards closed, and follows by IH for the successor case. The argument just given implies that, for any $\alpha$, $\xi^*(\corn{\vphi}, \Gamma_\alpha) \Leftrightarrow \xi(\corn{\vphi}, \Gamma_\alpha)$, whence we can conclude that $\bigcup_{\alpha\in\mrm{On}}\Gamma_\alpha$ is also the least fixed point of $\Theta^*$ containing $X$.  
\end{proof}
\noindent In the constructions above, we say that $X$ is the \textit{starting set} of the construction.

Next, we show that fixed points of $\Theta^*$ are also fixed points of $\Theta$. This is immediate by inspecting the definitions of $\Theta$ and $\Theta^*$:
\begin{lemma}\label{prop:thetastar implies theta}
Let $S=\Theta^*(S)$. Then, $S=\Theta(S)$. Conversely, if $S=\Theta(S)$ and $S$ is $\T$- and $\neg\T$-downwards closed, then $S=\Theta^*(S)$. 
\end{lemma}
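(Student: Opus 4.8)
The plan is to prove each direction as a pair of set inclusions, leaning on two structural facts about the operators: both $\Theta$ and $\Theta^*$ are sound on arbitrary inputs (Proposition \ref{theta is always sound} and the analogous observation recorded just after the definition of $\Theta^*$), and both operators simply close a $\xi$- respectively $\xi^*$-defined base set under $\pat$-provable consequence. The only arithmetical content needed is the relation between the base formulae: $\xi(x,X)$ is a subdisjunction of $\xi^*(x,X)$, and the two extra disjuncts of $\xi^*$ encode exactly $\T$- and $\neg\T$-downward closure. I will also use that $\Theta$-fixed points are closed under $\pat$-provable consequence, which follows directly from the definition by composing provabilities: if $\#\chi\in\Theta(S)$, witnessed by $\eta$ with $\xi(\corn\eta,S)$ and $\mrm{Pr}_{\pat}(\corn\eta\subdot \ra\corn\chi)$, and $\pat\vdash\chi\ra\vphi$, then $\mrm{Pr}_{\pat}(\corn\eta\subdot \ra\corn\vphi)$ holds, so $\#\vphi\in\Theta(S)$.

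For the first direction, assume $S=\Theta^*(S)$. Reading $\xi^*$ as $\xi$ augmented with the two downward-closure disjuncts, we have $\nat\vDash\xi(y,S)\ra\xi^*(y,S)$ for every $y$, so any provable-consequence witness for membership in $\Theta(S)$ is also one for $\Theta^*(S)$; hence $\Theta(S)\subseteq\Theta^*(S)$ directly from the definitions. Thus $\Theta(S)\subseteq\Theta^*(S)=S$, and since $S\subseteq\Theta(S)$ by Proposition \ref{theta is always sound}, we conclude $S=\Theta(S)$.

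For the converse, assume $S=\Theta(S)$ and that $S$ is $\T$- and $\neg\T$-downward closed; I show $S=\Theta^*(S)$. Soundness of $\Theta^*$ gives $S\subseteq\Theta^*(S)$, so it remains to prove $\Theta^*(S)\subseteq S$. The crucial step is the sublemma $\xi^*(\corn\chi,S)\Rightarrow\#\chi\in S$. If $\xi^*(\corn\chi,S)$ holds through one of the original $\xi$-disjuncts, then $\xi(\corn\chi,S)$ holds, whence $\#\chi\in\Theta(S)=S$ (take the witness $\corn\chi$ with $\pat\vdash\chi\ra\chi$). If it holds through the clause $x=t^\circ\wedge\mrm{Sent}(t^\circ)\wedge\subdot\T(t)\in X$, then $\chi$ is a sentence with $\#\T(t)\in S$ for $t^\circ=\#\chi$; using the logically valid equivalence $\T(t)\lra\T\corn\chi$ and closure under provable consequence we get $\#\T\corn\chi\in S$, so $\T$-downward closure yields $\#\chi\in S$. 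The clause $x=\subdot\neg t^\circ\wedge\dots$ is symmetric via $\neg\T$-downward closure. Given the sublemma, take $\#\vphi\in\Theta^*(S)$, witnessed by some $\chi$ with $\xi^*(\corn\chi,S)$ and $\pat\vdash\chi\ra\vphi$; then $\#\chi\in S=\Theta(S)$, and closure under provable consequence gives $\#\vphi\in S$. Hence $\Theta^*(S)\subseteq S$.

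The main obstacle is handling the two downward-closure disjuncts of $\xi^*$ correctly. It is tempting but false to try to show that $\xi^*(\cdot,S)$ and $\xi(\cdot,S)$ define the same set on downward-closed $S$: a truth-teller or liar witness shows these extra disjuncts genuinely fire exactly where the corresponding $\xi$-disjunct does not. The key observation that makes the argument go through is that the sublemma only needs to place the witness $\chi$ \emph{inside $S$}, not inside the one-step base set defined by $\xi(\cdot,S)$; once $\chi\in S=\Theta(S)$, closure of the $\Theta$-fixed point under $\pat$-provable consequence absorbs the final implication $\chi\ra\vphi$. The remaining care concerns the coding in clauses $A$ and $B$, where $t$ may be a non-numeral closed term with $t^\circ=\#\chi$; this is dispatched by the pure-logic equivalence $\T(t)\lra\T\corn\chi$ together with closure under provable consequence, and is otherwise routine.
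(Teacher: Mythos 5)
Your proof is correct and takes essentially the same route as the paper, which simply declares the lemma ``immediate by inspecting the definitions'': your argument is the careful version of that inspection, using the soundness of both operators on arbitrary inputs, the pointwise implication $\xi(y,S)\ra\xi^*(y,S)$, closure of $\Theta$-fixed points under $\pat$-provable consequence, and the observation that the two extra disjuncts of $\xi^*$ are absorbed exactly by $\T$- and $\neg\T$-downwards closure. One cosmetic correction: the equivalence $\T(t)\lra\T\corn{\chi}$ for a closed term $t$ with $t^\circ=\#\chi$ is not logically valid but $\pat$-provable (it rests on the true equation between $t$ and the numeral $\corn{\chi}$, provable in $\mrm{PA}$), which is harmless since you only ever invoke closure under $\pat$-provable consequence.
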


By Proposition \ref{failures of theta}, there are fixed points of $\Theta$ that are not $\T$- and $\neg \T$-downwards closed. However, $\T$ and $\neg \T$-downwards closure are still not sufficient to yield the $\nat$-categoricity of $\mrm{IT}$. There could still be complex sentences, such as $\tau_0\vee \tau_1$, within $\Theta^*$ fixed points, without their constituent sentences being there---which would violate $\T$-Out. However, if classical soundness is added as a condition on the fixed points, we obtain a more restricted class of fixed points that $\mrm{IT}$ can $\nat$-categorically axiomatize. 
\begin{prop}\label{prop:theta-star_n-cat-it}
Let $S\subseteq \mrm{Sent}_{\lt}$. Then the following claims are equivalent:
\begin{enumerate}
\item $S=\Theta^*(S) \; \& \; S \; \text{is classically sound}$;
\item  $S=\Theta(S) \; \& \; S \; \text{is classically sound and } \T\text{- and }\neg\T\text{-downwards closed}$;
\item  $S\vDash \indt$.
 \item $S\vDash \indt^* + \T\text{-}\mrm{Out}$
\end{enumerate}
\end{prop}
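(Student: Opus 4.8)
The plan is to prove the four-way equivalence by separating the ``semantic'' conditions (1), (2) from the ``axiomatic'' conditions (3), (4), establishing the links that follow quickly from results already in hand, and then reducing the remaining work to a comparison of the two theories $\indt$ and $\indt^*+\T\text{-}\mrm{Out}$, whose only difference is the occurrence of $\xi$ versus $\xi^*$ in $\mrm{IT3}$ and $\mrm{IT5}$. Concretely I would first dispatch (1)~$\Leftrightarrow$~(2) and (1)~$\Leftrightarrow$~(4), and then argue (3)~$\Leftrightarrow$~(4).

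For (1)~$\Leftrightarrow$~(2): by Lemma~\ref{prop:thetastar implies theta} every $\Theta^*$ fixed point is a $\Theta$ fixed point, and conversely a $\Theta$ fixed point that is $\T$- and $\neg\T$-downwards closed is a $\Theta^*$ fixed point; moreover $\Theta^*$ fixed points are Kripke sets (noted above), hence in particular $\T$- and $\neg\T$-downwards closed. Since the clause ``classically sound'' occurs verbatim in both conditions, the two are interderivable. For (1)~$\Leftrightarrow$~(4): Theorem~\ref{thm:n-cat_it_star} gives $S=\Theta^*(S)\Leftrightarrow S\vDash\indt^*$, so it suffices to note that, over $(\nat,S)$, the schema $\T\text{-}\mrm{Out}$ is equivalent to classical soundness. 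This is immediate: the instance of $\T\text{-}\mrm{Out}$ for a sentence $\psi$ is $\T\corn{\psi}\ra\psi$, which over $(\nat,S)$ reads $\#\psi\in S\Rightarrow(\nat,S)\vDash\psi$; ranging over all sentences this is exactly Definition~\ref{dfn:classound_downnwards_closure}(1).

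It then remains to prove (3)~$\Leftrightarrow$~(4), where I would argue that, under the axioms common to both theories — in particular $\mrm{IT6}$, $\mrm{IT7}$ as full biconditionals (yielding $\T$- and $\neg\T$-downwards closure) together with $\mrm{IT8}$ ($\pat$-provable closure) — the $\xi$- and $\xi^*$-versions of $\mrm{IT3}$ and $\mrm{IT5}$ become equivalent. The implication (3)~$\Rightarrow$~(4) is the easier half. Since $\xi\subseteq\xi^*$, the forward direction of each $\xi^*$-biconditional is inherited from the corresponding $\xi$-biconditional of $\indt$; and the backward direction — the only place where $\xi^*$ can fire through a downwards-closure clause, say on a $\chi$ with $\#\T\corn{\chi}\in S$ and $\pat\vdash\chi\ra x\subdot\vee y$ — is recovered by first applying $\mrm{IT6}$ to obtain $\#\chi\in S$ and then $\mrm{IT8}$ to conclude $\T(x\subdot\vee y)$. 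The existential axiom $\mrm{IT5}$ is treated analogously, and all other axioms of $\indt^*$ are literally axioms of $\indt$, so a model of $\indt$ is a model of $\indt^*+\T\text{-}\mrm{Out}$.

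The main obstacle is the converse (4)~$\Rightarrow$~(3), i.e.\ showing that a classically sound $\Theta^*$ fixed point validates $\mrm{IT3}$ and $\mrm{IT5}$ in their $\xi$-formulation. The delicate point is the forward direction of $\mrm{IT3}$: from $\#(\vphi\vee\psi)\in S$ one must produce $\#\vphi\in S$, or $\#\psi\in S$, or a genuinely Strong-Kleene (that is, $\xi$-) witness $z$ with $\pat\vdash z\subdot\ra(\vphi\subdot\vee\psi)$ — and a witness obtained through a $\xi^*$ downwards-closure clause does \emph{not} convert into a $\xi$-witness, since $\pat$ does not prove $\T\corn{\chi}\ra\chi$. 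Here I would try to exploit classical soundness via $\T\text{-}\mrm{Out}$ (which yields the object-language disjunction $\vphi\vee\psi$, hence a disjunct true in $(\nat,S)$) together with the fixed-point equation for $S$ to trace the membership of $\vphi\vee\psi$ back to a grounded, $\xi$-satisfied antecedent. I expect this extraction of a Strong-Kleene witness to be the crux of the whole proposition and the step most in need of care: it is exactly the tension between the penumbral truths holding in the classical model $(\nat,S)$ and those certified by the Strong-Kleene component $\xi$, and it is here that any hidden restriction on the admissible fixed points would have to be used.
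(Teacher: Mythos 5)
Your decomposition agrees with the paper's on two of its three links: the paper also proves $(1)\Leftrightarrow(4)$ (Theorem \ref{thm:n-cat_it_star} plus the observation that, over $(\nat,S)$, $\T\text{-}\mrm{Out}$ just \emph{is} classical soundness) and $(1)\Leftrightarrow(2)$ (Lemma \ref{prop:thetastar implies theta} plus the Kripke-set property of $\Theta^*$ fixed points), and your sketch of $(3)\Rightarrow(4)$ -- inheriting the $\xi^*$-biconditionals from the $\xi$-ones and discharging the downwards-closure clauses of $\xi^*$ via the right-to-left halves of $\mrm{IT6}$/$\mrm{IT7}$ followed by $\mrm{IT8}$ -- is sound. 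Where you diverge is the third link: the paper proves $(2)\Leftrightarrow(3)$ instead of $(3)\Leftrightarrow(4)$, by noting that $\indt$ is exactly $\indt^-$ plus $\T\text{-}\mrm{Out}$ plus the right-to-left directions of $\mrm{IT6}$ and $\mrm{IT7}$, so that Theorem \ref{thm:n-categoricity_theta} ($\indt^-$ is $\nat$-categorical for $\Theta$) does all the work: $S=\Theta(S)$ iff $S\vDash\indt^-$, classical soundness corresponds to $\T\text{-}\mrm{Out}$, and downwards closure to the two missing directions. On that route the comparison of the $\xi$- and $\xi^*$-versions of $\mrm{IT3}$/$\mrm{IT5}$, which you identify as the crux, never arises.

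Your remaining direction $(4)\Rightarrow(3)$ is a genuine gap as written, and the method you propose for it -- tracing the membership of $\vphi\vee\psi$ back through the $\Theta^*$ construction to a ``grounded, $\xi$-satisfied antecedent'' -- would fail: a sentence can enter $S$ solely via a downwards-closure clause of $\xi^*$, and, as you yourself note, such a derivation does not convert into a $\xi$-witness, since $\pat\nvdash\T\corn{\chi}\ra\chi$. But the difficulty dissolves using pieces you have already established. From $(4)$ you get $(1)$, hence $(2)$ by your own first paragraph, so $S=\Theta(S)$. Now the forward direction of the $\xi$-formulated $\mrm{IT3}$ is immediate from the \emph{definition} of $\Theta$: $\#(\vphi\vee\psi)\in S=\Theta(S)$ means precisely that $\nat\vDash\exists y\,(\xi(y,S)\wedge\mrm{Pr}_{\pat}(y\subdot\ra\corn{\vphi\vee\psi}))$, i.e.\ the third disjunct of $\mrm{IT3}$ holds outright in $(\nat,S)$; no extraction of a Strong Kleene witness is needed, because membership in $\Theta(S)$ just is the existence of one. ($\mrm{IT5}$ is handled identically, and the remaining axioms follow from $(2)$ as in the paper -- or one may simply cite Theorem \ref{thm:n-categoricity_theta}, which is the paper's $(2)\Leftrightarrow(3)$.) So your proposal is repairable in one line, but as it stands the final implication is incomplete and the envisaged strategy for it is a dead end; the lesson is that the semantic conditions $(1)$/$(2)$ should mediate between the two axiomatic conditions, rather than attempting a direct theory-to-theory derivation.
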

\begin{proof}
$1\Leftrightarrow  4$: In Theorem \ref{thm:n-cat_it_star}, we proved that $S=\Theta^*(S)$ iff $S\vDash \indt^*$. But if $S$ is classically sound, then $S\vDash \T\corn{\psi}\ra\psi$ for all $\psi$ s.t. $\psi\in\mrm{Sent}_{\lt}$. The reverse also holds: the fixed-point model $(\nat, S)$ satisfies T-Out only if $S$ is classically sound.\\
$2 \Leftrightarrow 3$: $\indt$ is $\indt^-$ plus $\T$-Out and the right-to-left directions of axioms $\mrm{IT6}$ and $\mrm{IT7}$. By Theorem \ref{thm:n-categoricity_theta}, $S=\Theta(S)$ iff $S\vDash \indt^-$. $\T$-Out holds iff $S$ is classically sound; and the right-to-left directions of $\mrm{IT6}$ and $\mrm{IT7}$ hold exactly when $S$ is $\T$- and $\neg\T$-downwards closed. \\
$1\Leftrightarrow 2$: given by Lemma \ref{prop:thetastar implies theta}.
\end{proof}

\subsection{Proof-Theory}
$\mrm{IT}^*$ also enjoys considerable proof-theoretic strength. In particular, we can show that it is at least as proof-theoretically strong as the theory $\mrm{KF}$. The proof-theoretic strength will be obtained by defining ramified truth predicates, so we refer the reader to \cite[Ch.9]{halbach_2014} for a definition of the relevant languages and systems. Instead of using $\mrm{IT}^*$, however, we will provide a more general result by using $\mrm{IT}^-$; since the latter is a subtheory of $\mrm{IT}^*$, the claim will follow immediately.
\begin{prop}\label{prop:lower bound itminus}
$\mrm{IT}^-$ can define the truth predicates of $\mrm{RT}_{<\varepsilon_0}$.
\end{prop}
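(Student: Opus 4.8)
The plan is to establish the lower bound by \emph{relatively interpreting} $\mrm{RT}_{<\varepsilon_0}$ in $\mrm{IT}^-$: I would define, uniformly in an ordinal notation $\alpha<\varepsilon_0$, an $\lt$-formula $\T_\alpha(x)$ playing the role of the level-$\alpha$ ramified truth predicate, and then show that $\mrm{IT}^-$ proves (the translations of) all axioms of $\mrm{RT}_{<\varepsilon_0}$: the Tarskian compositional clauses for each $\T_\alpha$ over the sublanguage $\mrm{L}_{<\alpha}$ (with predicates $\T_\beta$, $\beta\prec\alpha$), the cross-level clause $\T_\alpha(\corn{\T_\beta t})\lra\T_\beta(\val t)$ for $\beta\prec\alpha$, and full induction in the ramified language. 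Since relative definability of this kind yields $|\mrm{IT}^-|\geqq|\mrm{RT}_{<\varepsilon_0}|$, and since $\mrm{IT}^-$ is a subtheory of $\mrm{IT}^*$, both the proposition and its consequence for $\mrm{IT}^*$ follow at once.

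First I would fix the hierarchy by diagonalization. Using the diagonal lemma I obtain a single formula $\mrm{Tr}(a,x)$ such that $\mrm{IT}^-$ proves a Tarskian recursion clause by clause: arithmetic literals are decided by $\mrm{IT1}$; the positive connective and quantifier cases route through the ambient predicate $\T$ via $\mrm{IT2}$–$\mrm{IT5}$; and a truth atom $\T_\beta(t)$ with $\beta\prec a$ is unfolded to $\mrm{Tr}(b,\val t)$, the step from level $\beta$ to $\beta+1$ being effected by the genuine $\mrm{IT}^-$ iteration axioms $\mrm{IT}^-6$ and $\mrm{IT}^-7$. Setting $\T_\alpha(x):=\mrm{Tr}(\alpha,x)$, the positive halves of the compositional axioms and of the cross-level clause are then essentially immediate from $\mrm{IT2}$–$\mrm{IT5}$, $\mrm{IT}^-6$, $\mrm{IT}^-7$ and closure under $\pat$-provability $\mrm{IT8}$.

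The real work, and the main obstacle, is the \emph{determinacy} (or classicality) lemma. Because $\T$ is only a partial SSK-style predicate and $\mrm{IT}^-$ supplies merely the left-to-right halves of $\mrm{IT2}$, $\mrm{IT4}$, $\mrm{IT}^-6$ and $\mrm{IT}^-7$, one must still prove that each defined $\T_\alpha$ is \emph{total and two-valued} on $\mrm{L}_{<\alpha}$-sentences, i.e.\ that $\mrm{IT}^-\vdash \T_\alpha(\corn{\neg\vphi})\lra\neg\T_\alpha(\corn\vphi)$ and that every Tarskian clause holds as a biconditional. I would prove this by transfinite induction on $\alpha\prec\varepsilon_0$ with an inner induction on the positive complexity of $\vphi$: the point is that level-$\alpha$ sentences refer only to strictly lower, already-settled levels, so they are \emph{grounded}, and on grounded sentences the missing downward directions are recoverable from consistency $\mrm{IT9}$ together with the disjunction/existential clauses $\mrm{IT3}$, $\mrm{IT5}$ and $\mrm{IT8}$ (precisely the situation in which classical soundness obtains, as for $\mrm{I}_\Theta$). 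Crucially, running this transfinite induction internally up to $\varepsilon_0$ is where the arithmetical strength of $\mrm{IT}^-$ is consumed, and carrying it out — rather than the comparatively routine verification of the positive clauses — is the technically delicate step. Once determinacy is in hand, the compositional and cross-level axioms hold as biconditionals, and full ramified induction transfers because every $\mrm{L}_{\mrm{RT}}^{<\varepsilon_0}$-formula translates to an $\lt$-formula to which the full $\lt$-induction of $\pat\subseteq\mrm{IT}^-$ applies; this completes the interpretation and hence the lower bound.
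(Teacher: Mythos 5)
Your overall architecture coincides with the paper's: interpret each ramified predicate by iterated applications of the single predicate $\T$, obtain the positive clauses from $\mrm{IT1}$--$\mrm{IT5}$, $\mrm{IT}^-6$, $\mrm{IT}^-7$ and $\mrm{IT8}$, and carry the real weight with a determinacy lemma proved by transfinite induction below $\varepsilon_0$, powered by the full $\lt$-induction of $\pat$. This is exactly the paper's adaptation of Fujimoto: there the determinate sentences are singled out by the formula $\mrm{D}(x)$, whose properties (1)--(8) --- notably $\mrm{D}(x)\ra(\T(x)\lra\T(\subdot\T x))$, recovered from $\mrm{IT}^-7$ together with $\mrm{IT9}$ --- turn the one-directional axioms into biconditionals on determinate sentences, and the provable claim $\forall\gamma\leq\beta(\mrm{Sent}_{<\gamma}(x)\ra\mrm{D}(kx))$ is the analogue of your two-valuedness lemma. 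So your identification of where the strength is consumed is correct.

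The genuine gap lies in your first step, the diagonal formula $\mrm{Tr}(a,x)$. Diagonalization achieves self-reference only through G\"odel codes, so the recursion clauses of $\mrm{Tr}$ must route recursive occurrences of $\mrm{Tr}$ through the truth predicate, exactly as $\pi(x)$ does in Section \ref{sec:n-cat ssk}: the conjunction clause, for instance, delivers $\T\corn{\mrm{Tr}(\dot a,\dot y)}\wedge\T\corn{\mrm{Tr}(\dot a,\dot z)}$ rather than $\mrm{Tr}(a,y)\wedge\mrm{Tr}(a,z)$. To extract your ``Tarskian recursion clause by clause'' you therefore need the disquotational biconditionals $\T\corn{\mrm{Tr}(\dot a,\dot x)}\lra\mrm{Tr}(a,x)$ --- precisely the principle that had to be \emph{postulated} as the axiom $\mrm{TB}\pi$ of $\mrm{PK}$. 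But $\mrm{IT}^-$ cannot prove such instances: it omits $\T$-$\mrm{Out}$ and the right-to-left halves of $\mrm{IT6}$ and $\mrm{IT7}$, and by Theorem \ref{thm:n-categoricity_theta} its standard models are exactly the $\Theta$ fixed points, some of which (Proposition \ref{failures of theta}) are not even $\T$-downwards closed, so no disquotation schema for a defined coded formula is derivable. Nor does your determinacy lemma repair this: $\mrm{D}$-style reasoning relates nested $\T$-ascriptions to one another, never a $\T$-ascription of a coded defined formula to that formula itself. The fix is the paper's device: drop the diagonal lemma and define the level translation by primitive recursion on syntax (Fujimoto's functions $h$ and $k$), so that each ramified sentence is compiled into an honest $\lt$-sentence with genuinely nested occurrences of the primitive $\T$; then your determinacy induction applies essentially verbatim, the cross-level clause becomes an instance of property (8), and the remainder of your plan (including the transfer of ramified induction) goes through as in the paper.
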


\begin{proof}
Consider the formula 
\begin{equation*}
    \mrm{D}(x):= (\T x \vee \T \subdot\neg x) \wedge \forall y,z (x=(y\subdot \vee z) \ra \T y \vee \T z) \wedge \forall y,v (x=(\subdot \exists v y) \ra \exists t \T y(t/v))
\end{equation*}
It is immediate to verify that $\mrm{D}(x)$ satisfies the following properties:
\begin{enumerate}
    \item\label{d_property_one} $\mrm{D}(x)\rightarrow \mrm{D}(\subdot\neg x)$
    \item $\mrm{D}(x)\wedge \mrm{D}(y)\rightarrow \mrm{D}(x\subdot \vee y) \wedge \mrm{D}(x\subdot \rightarrow y)$
    \item $\mrm{Sent}_{\mc{L}_\T}(\subdot \forall vy) \wedge \forall t (\mrm{D}( y (t/v)))\rightarrow \mrm{D}(\subdot \forall vy)$
    \item $\mrm{D}(x)\rightarrow (\T (\subdot \neg x) \leftrightarrow \neg \T (x))$
    \item$\mrm{D}(x\subdot\vee y)\rightarrow (\T (x\subdot \vee y) \leftrightarrow \T (x) \vee \T (y))$ 
     \item$\mrm{D}(x\subdot\rightarrow y)\rightarrow (\T (x\subdot \rightarrow y) \leftrightarrow \T (x) \rightarrow \T (y))$ 
    \item $\mrm{D}(x)\rightarrow (\T (\subdot \forall v x) \leftrightarrow \forall y \T (x (y/v)))$
\item\label{d_property_eight} $\mrm{D}(x)\rightarrow (\T (x)\leftrightarrow \T (\subdot \T x))$
\end{enumerate}

We now use this formula to adapt a proof by Fujimoto \cite[Lemma 36] {fujimoto_2010}. We start by defining two p.r. functions \textit{à la} Fujimoto. First, for $\beta< \varepsilon_0$, let $h(x, \beta) $ be a p.r. function from $\omega\times\mrm{On}$ into $\omega$ defined by
\[   
h(x,\beta) = 
     \begin{cases}
       x &\quad\text{if } x\in \mrm{Sent}_{<\beta}\\
       \ulcorner 0=1\urcorner &\quad\text{otherwise} \\
     \end{cases}
\]
Second, we define a function $k(x)$ from $\mc{L}_{<\varepsilon_0}$ into $\lt$ by
    \[   
k(x) = 
     \begin{cases}
       x, &\quad\text{if } x\text{ is an atomic formula of $\lnat$}\\
       \ulcorner \T (k\circ h(t,\beta))\urcorner &\quad\text{if } x=\subdot \T_\beta t \text{ and } t \text{ is a closed term}\\
       \ulcorner \neg \T (k(\dot y))\urcorner  &\quad\text{if } x=\subdot \neg y\\
       \ulcorner \T (k(\dot y)) \vee \T (k(\dot z))\urcorner  &\quad\text{if } x=y \subdot \vee z\\
       \ulcorner \T k(\dot y) \wedge \T (k(\dot z))\urcorner  &\quad\text{if } x=y \subdot \wedge z\\
       \ulcorner \forall z \T (k(\dot y (u/\corn{z})))\urcorner(z/\corn{u})  &\quad\text{if } x=\subdot \forall z y \text{ and } z \text{ is a variable}\\
       \ulcorner 0=1\urcorner &\quad\mrm{otherwise} \\ 
     \end{cases}
\]
Using the axioms of $\indt^-$ and properties \ref{d_property_one}-\ref{d_property_eight}, one can show:
\begin{equation*}
    \indt^-\vdash \forall \gamma\leq \beta (\mrm{Sent}_{<\gamma}(x) \rightarrow \mrm{D}(kx))
\end{equation*}
Finally, for any ordinal $\alpha<\varepsilon_0$, define $\sigma_\alpha: \mc{L}_{<\alpha}\longrightarrow \mc{L}_\T$ as: 
\[   
\sigma_\alpha(\vphi) = 
     \begin{cases}
       s=t &\quad\text{if } \vphi=(s = t)\\
       \T(kx) & \quad \text{if } \vphi=\T_\beta(x), \beta<\alpha\\
       \neg \sigma_\alpha(\psi) &\quad\text{if } \vphi=\neg\psi\\
    \sigma_\alpha(\psi) \vee \sigma_\alpha(\chi)&\quad\text{if } \vphi=\psi\vee\chi\\
 \sigma_\alpha(\psi) \wedge \sigma_\alpha(\chi)&\quad\text{if } \vphi=\psi\wedge\chi\\
  \exists v \sigma_\alpha(\psi)&\quad\text{if } \vphi=\exists v \psi\\
  \forall v \sigma_\alpha(\psi)&\quad\text{if } \vphi=\forall v \psi\\
     \end{cases}
\]
It is easy to verify that translating $\T_\alpha(x)$ as $\sigma_\alpha(x)$, for $\alpha<\varepsilon_0$, yields a truth-definition of $\mrm{RT}_{<\varepsilon_0}$ in $\indt^-$.
\end{proof}

\begin{corollary}
$\vert \mrm{IT}^* \vert \geqq \vert \mrm{IT}^- \vert  \geqq \vert \mrm{KF}\vert\equiv\vert \mrm{RT}_{<\varepsilon_0}\vert$
\end{corollary}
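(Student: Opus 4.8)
The corollary chains together three facts, of which the first two are already in hand and the third is classical; the plan is simply to assemble them and check that the assembly is legitimate for \emph{proof-theoretic strength} (and not merely relative consistency). Throughout I use the paper's definition of $\vert S\vert$ as the $\alpha$ for which $\pa+\mrm{TI}_{\lnat}(<\alpha)$ and $S$ prove the same $\lnat$-statements, so that the relevant data are the provable $\lnat$-sentences.

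For the leftmost inequality $\vert \mrm{IT}^*\vert\geqq\vert \mrm{IT}^-\vert$: as observed just before Proposition \ref{prop:lower bound itminus}, $\mrm{IT}^-$ is a subtheory of $\mrm{IT}^*$, in the sense that every axiom of $\mrm{IT}^-$ is among the theorems of $\mrm{IT}^*$. Hence every $\lnat$-sentence provable in $\mrm{IT}^-$ is provable in $\mrm{IT}^*$, and since $\vert\cdot\vert$ is determined by the provable $\lnat$-sentences, this monotonicity gives $\vert \mrm{IT}^*\vert\geqq\vert \mrm{IT}^-\vert$ with no further work.

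For the middle inequality $\vert \mrm{IT}^-\vert\geqq\vert \mrm{KF}\vert$: Proposition \ref{prop:lower bound itminus} already supplies a relative interpretation of $\mrm{RT}_{<\varepsilon_0}$ in $\mrm{IT}^-$ via the translation $\sigma_\alpha$, which sends each ramified predicate $\T_\beta$ (for $\beta<\varepsilon_0$) to $\T(k(\cdot))$ while acting as the identity on the arithmetical vocabulary. The point to verify is that $\sigma_\alpha$ is \emph{arithmetically transparent}: it fixes atomic $\lnat$-formulae and commutes with $\neg,\vee,\wedge,\exists,\forall$, so that $\sigma_\alpha(\vphi)=\vphi$ for every purely arithmetical $\vphi$. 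Given this, every $\lnat$-theorem of $\mrm{RT}_{<\varepsilon_0}$ — in particular every instance of transfinite induction it proves, which is exactly what pins down its ordinal — is a theorem of $\mrm{IT}^-$. Consequently $\vert \mrm{IT}^-\vert\geqq\vert \mrm{RT}_{<\varepsilon_0}\vert$.

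Finally, $\vert \mrm{KF}\vert\equiv\vert \mrm{RT}_{<\varepsilon_0}\vert$ is the classical calibration of the Kripke--Feferman theory against ramified truth, both sides being known to have proof-theoretic ordinal $\vphi_{\varepsilon_0}0$; I would simply cite the standard treatment (e.g.\ \cite[Ch.9]{halbach_2014}, going back to \cite{feferman_1991} and \cite{cantini_1996}). Transitivity of $\geqq$ together with this equivalence then yields the displayed chain. The substantive labour has already been discharged inside Proposition \ref{prop:lower bound itminus}, so the only genuine care needed here is the transparency of $\sigma_\alpha$ on arithmetic in the middle step — this is what guarantees that the interpretation transfers \emph{strength}, i.e.\ transports the provable instances of $\mrm{TI}_{\lnat}$ rather than merely preserving consistency; everything else is bookkeeping.
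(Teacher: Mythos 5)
Your proposal is correct and follows essentially the same route as the paper, which likewise obtains the corollary by combining the subtheory relation $\mrm{IT}^-\subseteq\mrm{IT}^*$ (asserted just before Proposition \ref{prop:lower bound itminus}), the truth-definition of $\mrm{RT}_{<\varepsilon_0}$ in $\mrm{IT}^-$ via $k$ and $\sigma_\alpha$ from that proposition, and the standard calibration $\vert\mrm{KF}\vert\equiv\vert\mrm{RT}_{<\varepsilon_0}\vert$ with ordinal $\vphi_{\varepsilon_0}0$. Your explicit check that $\sigma_\alpha$ is the identity on $\lnat$ (so the interpretation transports provable instances of $\mrm{TI}_{\lnat}$, not merely consistency) is a point the paper leaves tacit, as is the small derivation showing $\mrm{IT}^*$ proves the $\xi$-versions of $\mrm{IT3}$/$\mrm{IT5}$ from their $\xi^*$-versions via $\mrm{IT6}$--$\mrm{IT8}$, but both are routine and your appeal to the paper's observations is faithful to its argument.
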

\noindent Just like in the case of $\pk$,  $\mrm{IT}^*, \mrm{IT}^-$ are sub-theories of $\mrm{IT}$. However, a sharp proof-theoretic analysis of $\mrm{IT}^*, \mrm{IT}^-$ is not yet available. 

By contrast, the proof-theoretic analysis of $\mrm{IT}^*+\T\text{-Out}$ follows the one for $\mrm{IT}$ from \cite{stern_2018}: the lower bound is provided by the provability of Bar-Induction in (a fragment of) $\mrm{IT}^*+\T\text{-Out}$, which is known to be sufficient to derive the arithmetical theorems of $\mrm{ID}_1$;  for the upper bound argument, the strategy is to formalize in the theory $\mrm{KPU}$ the stages of the construction of the minimal fixed point of $\Theta$, and establish the equivalence between membership in the fixed point and provability in the infinitary system defined in Proposition \ref{prop:itheta_models_it}. Since, provably in $\mrm{KPU}$, $\mrm{I}_{\Theta}=\mrm{I}_{\Theta^*}$, we can use such a provability predicate as an adequate interpretation for the truth predicate of $\mrm{IT}^*+\T\text{-}\mrm{Out}$. That is:
\begin{prop}\label{prop:ptaittout}
$\vert \mrm{IT}^*+\T\text{-}\mrm{Out}\vert= \vert \mrm{IT} \vert = \vert \mrm{ID}_1 \vert.$ 
\end{prop}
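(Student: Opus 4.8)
For the lower bound, the plan is to adapt Stern's derivation of Bar\nobreakdash-Induction from a fragment of $\indt$. That derivation relies only on the compositional and closure axioms together with the reflection of classical soundness supplied by $\T\text{-}\mrm{Out}$, and Bar\nobreakdash-Induction is in turn known to suffice for deriving the arithmetical theorems of $\mrm{ID}_1$. Since the only difference between $\indt$ and $\indt^*+\T\text{-}\mrm{Out}$ lies in the replacement of $\xi$ by $\xi^*$ in $\mrm{IT3}$ and $\mrm{IT6}$ --- a change that merely adjoins $\T$-downwards closure clauses --- the fragment Stern exploits is still present in $\indt^*+\T\text{-}\mrm{Out}$, so his argument transfers with at most cosmetic changes. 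This yields $\vert\indt^*+\T\text{-}\mrm{Out}\vert\geqq\vert\mrm{ID}_1\vert$, and the identity $\vert\indt\vert=\vert\mrm{ID}_1\vert$ is already available from \cite[Cor.~6.7]{stern_2018}.

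For the upper bound I would formalize, inside Kripke--Platek set theory with urelements $\mrm{KPU}$, the stages of the inductive construction of the minimal fixed point $\mrm{I}_\Theta$ and the infinitary calculus $\mrm{SV}_\infty$ of Proposition \ref{prop:itheta_models_it}, re-establishing in $\mrm{KPU}$ the equivalence $\mrm{SV}_\infty\vdash\vphi\Leftrightarrow\#\vphi\in\mrm{I}_\Theta$. Interpreting $\T(t)$ as ``the sentence coded by $t^\circ$ is $\mrm{SV}_\infty$-provable'' then defines a translation of $\lt$ into the language of $\mrm{KPU}$ that fixes $\lnat$-formulae. The next step is to verify, inside $\mrm{KPU}$, that this interpretation validates every axiom of $\indt^*+\T\text{-}\mrm{Out}$: the compositional and closure axioms follow as in Proposition \ref{prop:itheta_models_it}; the $\xi^*$-based versions of $\mrm{IT3}$ and $\mrm{IT6}$ follow from the $\mrm{KPU}$-provable identity $\mrm{I}_\Theta=\mrm{I}_{\Theta^*}$, which is the formalized content of Lemma \ref{lem:ttstarind}(2) and Lemma \ref{prop:thetastar implies theta}; and $\T\text{-}\mrm{Out}$ follows from the classical soundness of $\mrm{I}_\Theta$, itself provable in $\mrm{KPU}$. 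Because the translation leaves arithmetic untouched, $\mrm{KPU}$ proves every arithmetical theorem of $\indt^*+\T\text{-}\mrm{Out}$, whence $\vert\indt^*+\T\text{-}\mrm{Out}\vert\leqq\vert\mrm{KPU}\vert=\vert\mrm{ID}_1\vert$. Combining the two bounds with $\vert\indt\vert=\vert\mrm{ID}_1\vert$ delivers the full chain of equalities.

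The hard part will be the book-keeping on the upper-bound side: one must carry out the ordinal analysis of $\mrm{SV}_\infty$ (cut elimination and collapsing) using only the resources of $\mrm{KPU}$, so that the whole construction provably stays below the Bachmann--Howard ordinal, and one must certify that the $\mrm{KPU}$-internal proofs of $\mrm{I}_\Theta=\mrm{I}_{\Theta^*}$ and of classical soundness genuinely discharge the $\xi^*$-axioms and $\T\text{-}\mrm{Out}$ under the chosen interpretation. A secondary but necessary check on the lower-bound side is to confirm that passing from $\xi$ to $\xi^*$ nowhere weakens the fragment that drives Stern's Bar\nobreakdash-Induction argument, so that the derivation really does survive in $\indt^*+\T\text{-}\mrm{Out}$.
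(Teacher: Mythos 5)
Your proposal matches the paper's own argument essentially step for step: the paper likewise obtains the lower bound by transferring Stern's Bar-Induction derivation to (a fragment of) $\indt^*+\T\text{-}\mrm{Out}$, and the upper bound by formalizing in $\mrm{KPU}$ the stages of the minimal fixed point of $\Theta$, establishing the equivalence between membership in $\mrm{I}_\Theta$ and provability in the infinitary system $\mrm{SV}_\infty$ of Proposition \ref{prop:itheta_models_it}, and using the $\mrm{KPU}$-provable identity $\mrm{I}_\Theta=\mrm{I}_{\Theta^*}$ to make $\mrm{SV}_\infty$-provability an adequate interpretation of the truth predicate. Your additional remarks (keeping the ordinal analysis below the Bachmann--Howard ordinal, discharging $\T\text{-}\mrm{Out}$ via the $\mrm{KPU}$-provable classical soundness of $\mrm{I}_\Theta$, and checking that the passage from $\xi$ to $\xi^*$ does not disturb the Bar-Induction fragment) are faithful elaborations of that same plan rather than a different route.
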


\section{Relations with standard supervaluationist fixed points}\label{sec:relssksup}

In this section we establish some connections between supervaluation-style fixed-point semantics and the standard supervaluational approaches. The aim is to draw a precise picture of the prospects and limitations of supervaluation-style semantics and its connection to standard supervaluational semantics.

Recall the schema $\mrm{VB}$ introduced in Section \ref{supervaluations}. It is easy to see that the minimal fixed points of $\Theta$, $\Theta^*$ and $\mrm{VB}$ coincide:

\begin{corollary}
$\mrm{I}_\Theta=\mrm{I}_{\Theta^*} =\mrm{I}_\mrm{vb}$.
\end{corollary}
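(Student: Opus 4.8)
The plan is to handle the two equalities in turn, reducing each to machinery already in place. For $\mrm{I}_\Theta=\mrm{I}_{\Theta^*}$ I would instantiate Lemma~\ref{lem:ttstarind}(2) at the starting set $X=\varnothing$, which is vacuously classically sound and $\T$- and $\neg\T$-downwards closed. By definition the least $\Theta$-fixed point over $\varnothing$ is $\mrm{I}_\Theta$, and the closing observation in the proof of that lemma — that for such starting sets $\xi^*(\corn{\vphi},\Gamma_\alpha)\Leftrightarrow\xi(\corn{\vphi},\Gamma_\alpha)$ at every stage, so the $\Theta$- and $\Theta^*$-hierarchies over $X$ coincide — identifies this set with the least $\Theta^*$-fixed point over $\varnothing$, i.e.\ $\mrm{I}_{\Theta^*}$. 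Equivalently, $\mrm{I}_\Theta$ is $\T$- and $\neg\T$-downwards closed by Lemma~\ref{lem:ttstarind}(2), hence a $\Theta^*$-fixed point by Lemma~\ref{prop:thetastar implies theta}, and minimality then pins it down as $\mrm{I}_{\Theta^*}$.

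For $\mrm{I}_\Theta=\mrm{I}_\mrm{vb}$ I would route everything through the infinitary calculus $\mrm{SV}_\infty$ of Table~\ref{tab:svinf}. The proof of Proposition~\ref{prop:itheta_models_it} already records
\[
\mrm{SV}_\infty\vdash\vphi\;\Leftrightarrow\;\#\vphi\in\mrm{I}_\Theta
\]
(left-to-right from \cite[Lemma 4.4]{stern_2018}, right-to-left by induction on the stages of $\mrm{I}_\Theta$). It therefore suffices to prove the companion equivalence $\mrm{SV}_\infty\vdash\vphi\Leftrightarrow\#\vphi\in\mrm{I}_\mrm{vb}$, that is, that Cantini's system is sound and complete for the minimal $\mrm{VB}$-fixed point; chaining the two biconditionals yields $\mrm{I}_\Theta=\mrm{I}_\mrm{vb}$. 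I would establish the companion equivalence by a semantic double inclusion, exploiting the correspondence between Strong-Kleene groundedness and $\mrm{VB}$-precisifications noted in Section~\ref{sec:intro}.

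The inclusion $\mrm{I}_\Theta\subseteq\mrm{I}_\mrm{vb}$ is the routine one: since $\varnothing\subseteq\mrm{I}_\mrm{vb}$ and $\Theta$ is monotone, it is enough to check that $\mrm{I}_\mrm{vb}$ is a $\Theta$-prefixed point, whence the whole $\Theta$-hierarchy over $\varnothing$ stays inside it. Indeed, if $\#\vphi\in\Theta(\mrm{I}_\mrm{vb})$ via some $\psi$ with $\mrm{I}_\mrm{vb}\vsk\psi$ and $\pat\vdash\psi\ra\vphi$, then every admissible precisification $X'\supseteq\mrm{I}_\mrm{vb}$ with $X'\cap\mrm{I}_\mrm{vb}^-=\varnothing$ classically satisfies $\psi$ (Strong-Kleene truth is preserved under consistent total extensions) and satisfies $\pat$ (as $(\nat,X')$ is built on the standard model), hence $(\nat,X')\vDash\vphi$; since $X'$ is arbitrary, $\#\vphi\in\mrm{VB}(\mrm{I}_\mrm{vb})=\mrm{I}_\mrm{vb}$.

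The main obstacle is the converse inclusion $\mrm{I}_\mrm{vb}\subseteq\mrm{I}_\Theta$, and it is exactly here that the $(\omega)$-rule does the essential work. The tension is that $\mrm{VB}$ collects \emph{all} classical consequences over admissible precisifications — a $\Pi^1_1$ notion — whereas $\Theta$ only closes Strong-Kleene truths under first-order $\pat$-provability, and Proposition~\ref{failure of omega logic} shows that individual fixed points need not even be $\omega$-closed. What rescues the coincidence at the \emph{minimal} fixed point is that the transfinite iteration of $\Theta$ from $\varnothing$ mirrors, stage by stage, the $(\omega)$-rule of $\mrm{SV}_\infty$: a universally quantified sentence enters $\mrm{I}_\Theta$ once all its instances have entered at earlier stages. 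I would thus argue by induction along the $\mrm{VB}$-hierarchy over $\varnothing$ that each $\mrm{VB}$-stage is $\mrm{SV}_\infty$-derivable — using $(\omega)$ to capture the universally quantified penumbral truths that a first-order closure would miss, and Cantini's admissible $\T$-elimination rules to match the $\T$-clauses of $\mrm{VB}$ with the $\T$-introduction rules of $\mrm{SV}_\infty$ — and then transfer back to $\mrm{I}_\Theta$ through the first equivalence. The delicate point will be aligning the ordinal bookkeeping of the $\mrm{VB}$-hierarchy, the $\mrm{SV}_\infty$-derivation heights, and the $\Theta$-stages so that the induction closes.
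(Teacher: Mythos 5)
Your first equality is correct and in fact tidier than the paper's: instantiating Lemma~\ref{lem:ttstarind}(2) at $X=\varnothing$ shows directly that the $\Theta$- and $\Theta^*$-hierarchies over $\varnothing$ coincide, so $\mrm{I}_\Theta=\mrm{I}_{\Theta^*}$ with no detour through $\mrm{VB}$, whereas the paper infers the $\T$- and $\neg\T$-downwards closure of $\mrm{I}_\Theta$ from the already-established identity $\mrm{I}_\Theta=\mrm{I}_\mrm{vb}$ and then applies Lemma~\ref{prop:thetastar implies theta} plus minimality, exactly as in your fallback argument. Your inclusion $\mrm{I}_\Theta\subseteq\mrm{I}_\mrm{vb}$ also goes through, modulo one imprecision: the witness for $\#\vphi\in\Theta(\mrm{I}_\mrm{vb})$ satisfies $\nat\vDash\xi(\corn{\psi},\mrm{I}_\mrm{vb})$, which is \emph{one application of the Strong Kleene jump} to $\mrm{I}_\mrm{vb}$ as a set of codes, not $\mrm{I}_\mrm{vb}\vsk\psi$; the repair is the paper's Lemma~\ref{lemma: vb soundness preserved} (for $\mrm{VB}$-sound $X$, $\xi(\corn{\psi},X)$ already yields $\#\psi\in\mrm{VB}(X)$), after which closure of $\mrm{VB}$ under $\pat$-consequence and minimality of $\mrm{I}_\Theta$ finish that direction.

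The genuine gap is the converse inclusion $\mrm{I}_\mrm{vb}\subseteq\mrm{I}_\Theta$. What you call the ``companion equivalence'' $\mrm{SV}_\infty\vdash\vphi\Leftrightarrow\#\vphi\in\mrm{I}_\mrm{vb}$ is precisely Cantini's theorem, and the paper's entire proof of $\mrm{I}_\Theta=\mrm{I}_\mrm{vb}$ consists in citing it \cite[\S 4, Thm.~4.7]{cantini_1990} and chaining it with the equivalence from Proposition~\ref{prop:itheta_models_it}; you have undertaken to reprove it, and your sketch does not close. The induction ``each $\mrm{VB}$-stage is $\mrm{SV}_\infty$-derivable'' fails at successor stages for a structural reason: $\mrm{VB}(V_\alpha)$ is not generated by rules that $\mrm{SV}_\infty$ can mirror one step at a time --- it is the $\Pi^1_1$ collection of \emph{all} sentences classically true in every admissible precisification of $V_\alpha$. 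Hence the successor step is itself a completeness theorem: in contraposition, from $\mrm{SV}_\infty\nvdash\vphi$ one must construct an admissible $X'\supseteq V_\alpha$ with $X'\cap V_\alpha^-=\varnothing$ and $(\nat,X')\nvDash\vphi$. Neither the $(\omega)$-rule (which only handles quantifier instances) nor the admissible $\T$-elimination rules supply such a countermodel construction, and this is exactly what your closing remark about ``aligning the ordinal bookkeeping'' leaves undone. If you want a self-contained route that avoids citing Cantini, the paper itself later provides one: Lemma~\ref{prop:pat prov implies theta fp} together with the $\omega$-completeness argument of Proposition~\ref{lemma:vb_sound_least_theta}, instantiated at the starting set $X=\varnothing$, gives $\bigcup_{\beta\in\mrm{On}}\Gamma^{\mrm{vb}}_\beta=\bigcup_{\beta\in\mrm{On}}\Gamma^{\Theta}_\beta$, i.e.\ $\mrm{I}_\mrm{vb}=\mrm{I}_\Theta$, with no mention of $\mrm{SV}_\infty$ at all; that is the missing idea your proposal needs.
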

\begin{proof}
The proof of Proposition \ref{prop:itheta_models_it} shows that $\#\vphi \in \mrm{I}_\Theta \Leftrightarrow \mrm{SV}_\infty \vdash \vphi$. But Cantini \cite[\S 4, esp. Thm. 4.7]{cantini_1990} proved that $\#\vphi \in \mrm{I}_\mrm{vb} \Leftrightarrow \mrm{SV}_\infty \vdash \vphi$. It then follows that $\#\vphi \in \mrm{I}_\Theta \Leftrightarrow \#\vphi \in \mrm{I}_\mrm{vb} $.

We now show that $\#\vphi \in \mrm{I}_\Theta \Leftrightarrow \#\vphi \in \mrm{I}_{\Theta^*} $. We know that VB fixed points are $\T$- and $\neg\T$-downwards closed (Def. \ref{dfn:classound_downnwards_closure}); combining this with the fact that $\mrm{I}_\Theta$ is a fixed point of $\Theta$, it follows that it is also a fixed point of $\Theta^*$. But if $\mrm{I}_{\Theta}$ were not the minimal fixed point of $\Theta^*$, there would be a set $S=\Theta^*(S)$ such that $S\subsetneq \mrm{I}_{\Theta}$. Also, by Lemma \ref{prop:thetastar implies theta}, $S=\Theta(S)$. But this contradicts the fact that $\mrm{I}_\Theta$ is the $\Theta$-least fixed point.
\end{proof}

By contrast, it can be shown that the minimal fixed point is a proper subset of the least $\mrm{VB}$ fixed point---and, as a consequence, of the least fixed points of $\Theta$ and $\Theta^*$:

\begin{prop}\label{prop:ssk and vb}
$\mrm{I}_\mrm{ssk}\subsetneq \mrm{I}_\mrm{vb}$. 
\end{prop}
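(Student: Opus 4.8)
The plan is to prove the two inclusions separately. First I would show $\mrm{I}_\mrm{ssk}\subseteq\mrm{I}_\mrm{vb}$, and then that the inclusion is strict by producing a witness $\vphi$ with $\#\vphi\in\mrm{I}_\mrm{vb}\setminus\mrm{I}_\mrm{ssk}$. The natural witness is the sentence $\mrm{TI}(\tau,\varepsilon_0)$ from Proposition \ref{failure of omega logic}, whose entire point is that it is an $\omega$-logic consequence of sentences already present in the construction without being a first-order ($\pat$-provable) one; this is exactly the gap between $\mrm{VB}$ (which collects second-order/$\omega$-logic consequences of the Strong Kleene truths) and $\mrm{SSK}$ (which collects only the $\pat$-provable ones).

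For the inclusion I would first establish the pointwise statement $\mrm{SSK}(X)\subseteq\mrm{VB}(X)$ for every consistent $X\subseteq\mrm{Sent}_{\lt}$. Suppose $\#\vphi\in\mrm{SSK}(X)$, so there is $\psi$ with $X\vsk\psi$ and $\pat\vdash\psi\ra\vphi$. Fix any admissible precisification $X'$, i.e.\ $X'\supseteq X$ with $X'\cap X^-=\varnothing$. The key sub-lemma is the preservation fact underlying the correspondence discussed in Section \ref{sec:intro}: Strong-Kleene truths over $X$ survive in every such $X'$, so $X\vsk\psi$ gives $X'\vDash\psi$. Since the truth predicate is uninterpreted in $\pat$, induction holds in $(\nat,X')$ for every interpretation of $\T$, whence $(\nat,X')\vDash\pat$; soundness then yields $(\nat,X')\vDash\psi\ra\vphi$, so $X'\vDash\vphi$. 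As $X'$ was arbitrary, $X\vDash_\mrm{vb}\vphi$, i.e.\ $\#\vphi\in\mrm{VB}(X)$. I would lift this to the minimal fixed points by a simultaneous transfinite induction on the stages iterated from $\varnothing$: writing $\mrm{ssk}_\alpha$ and $\mrm{vb}_\alpha$ for the respective iterates, one proves $\mrm{ssk}_\alpha\subseteq\mrm{vb}_\alpha$ together with $\mrm{CON}(\mrm{ssk}_\alpha)$, using the pointwise inclusion at successors (legitimate because $\mrm{ssk}_\alpha$ is consistent by hypothesis, so the jump is defined there), monotonicity of $\mrm{VB}$, and the consistency of $\mrm{vb}_\alpha$ from the supervaluation construction to recover $\mrm{CON}(\mrm{ssk}_{\alpha+1})$; limits are handled by unions.

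For strictness, membership $\#\mrm{TI}(\tau,\varepsilon_0)\in\mrm{I}_\mrm{vb}$ follows by noting that transfinite induction up to $\varepsilon_0$ for an arbitrary predicate is a true statement over $\nat$ — the notation ordering below $\varepsilon_0$ really is well-founded — so $(\nat,X')\vDash\mrm{TI}(\tau,\varepsilon_0)$ for every $X'\subseteq\mrm{Sent}_{\lt}$. Since $\varnothing^-$ consists only of non-sentences, this is precisely the condition for $\#\mrm{TI}(\tau,\varepsilon_0)\in\mrm{VB}(\varnothing)$, and monotonicity gives $\mrm{VB}(\varnothing)\subseteq\mrm{VB}(\mrm{I}_\mrm{vb})=\mrm{I}_\mrm{vb}$. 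Equivalently, one may invoke $\mrm{I}_\mrm{vb}=\mrm{I}_\Theta$ together with the characterization $\#\vphi\in\mrm{I}_\Theta\Leftrightarrow\mrm{SV}_\infty\vdash\vphi$ from Proposition \ref{prop:itheta_models_it}, deriving $\mrm{TI}(\tau,\varepsilon_0)$ in $\mrm{SV}_\infty$ by a single application of the $\omega$-rule to the $\pat$-provable approximations $\mrm{TI}(\tau,f(n))$. On the other hand, $\#\mrm{TI}(\tau,\varepsilon_0)\notin\mrm{I}_\mrm{ssk}$ is exactly the content of Proposition \ref{failure of omega logic}. Combining the inclusion with this witness yields $\mrm{I}_\mrm{ssk}\subsetneq\mrm{I}_\mrm{vb}$.

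The main obstacle is the pointwise inclusion, and within it the preservation sub-lemma $X\vsk\psi\Rightarrow X'\vDash\psi$ for admissible $X'$, which must be proved by induction on the positive complexity of $\psi$; the case of a negated truth ascription $\neg\T t$ is where the admissibility condition $X'\cap X^-=\varnothing$ is genuinely used. The other delicate point is the consistency bookkeeping in the transfinite induction, needed not for correctness but for \emph{definedness}, since the jumps are only defined on consistent sets, so one must guarantee that each $\mrm{ssk}_\alpha$ to which the pointwise lemma is applied is itself consistent.
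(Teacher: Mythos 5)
Your proof is correct and follows essentially the same route as the paper: the inclusion $\mrm{I}_\mrm{ssk}\subseteq\mrm{I}_\mrm{vb}$ via the pointwise fact $\mrm{SSK}(X)\subseteq\mrm{VB}(X)$ (which the paper delegates to \cite[Lemma 3.6]{stern_2018}, citing exactly your two ingredients: Strong Kleene satisfaction entails $\mrm{VB}$-satisfaction, and $\mrm{VB}$-truth is closed under $\pat$-consequence), and strictness via the witness $\mrm{TI}(\tau,\varepsilon_0)$ of Proposition \ref{failure of omega logic} together with the $\omega$-closure (equivalently, your direct classical-truth argument over $\nat$) placing it in $\mrm{I}_\mrm{vb}$. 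Your added care about the consistency bookkeeping in the stagewise induction and the preservation sub-lemma merely makes explicit what the paper leaves to the cited lemma.
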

\begin{proof}
That $\mrm{I}_\mrm{ssk}\subseteq \mrm{I}_\mrm{vb}$ was already proven in \cite[Lemma 3.6]{stern_2018}: indeed, the implication 
\begin{align*}
\vphi \in \mrm{SSK}(S) & \Ra  \forall S'\supseteq S (\Phi_\mrm{vb}(S')\Ra S'\vDash \vphi)
\end{align*}
holds because, as we have seen, (i) satisfaction in Strong Kleene entails $\mrm{VB}$-satisfaction, and (ii) $\mrm{VB}$-truth is closed under $\mrm{PAT}$-consequence. 
%
That the inclusion is proper follows from Proposition \ref{failure of omega logic}, where we showed how $\mrm{I}_\mrm{ssk}$ fails to be closed under $\omega$-logic. By contrast, as is well-known, VB fixed points (including $\mrm{I}_\mrm{vb}$) are closed under $\omega$-logic.
\end{proof}



It is clear that some fixed points of $\Theta$ are not VB fixed points. For instance, some fixed points of $\Theta$ fail to be transparent for the truth predicate (Proposition \ref{failures of theta}). It is also clear that some $\Theta^*$ fixed points are not VB fixed points: all fixed-point models of VB satisfy T-Out, but not all $\Theta^*$ fixed points do.\footnote{Again, just take the fixed point that obtains by applying $\Theta^*$ to $\{\#\tau_1\vee\tau_2\}$.} What if we restrict our attention to $\Theta^*$ fixed-point models of T-Out, i.e., that are classically sound? In Proposition \ref{prop:theta-star_n-cat-it}, we showed that those models are precisely the ones that satisfy $\indt$. It then follows that the class of classically sound fixed points of $\Theta^*$ cannot coincide with the class of supervaluational, VB fixed points. Otherwise, VB would have an $\nat$-categorical axiomatization (namely, $\indt$), which we know isn't the case by the complexity considerations described in the introduction.

There is more we can say about fixed points of VB and of $\Theta^*$. First, that the class of VB fixed points is a subclass of the $\Theta^*$ fixed points: 

\begin{lemma}\label{every fp of vb is a fp of thetastar}
Let $S=\mrm{VB}(S)$. Then, $S=\Theta^*(S)$, and $S=\Theta(S)$, and $S$ is classically sound.
\end{lemma}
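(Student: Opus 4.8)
The plan is to establish $S = \Theta(S)$ first, then obtain $S = \Theta^*(S)$ essentially for free, and finally read off classical soundness from the VB clause. The reason for routing through $\Theta$ rather than attacking $\Theta^*$ directly is that $\Theta$ is built from the plain Strong Kleene formula $\xi$, for which $\nat \vDash \xi(\corn{\chi}, S)$ is equivalent to $S \vDash_\mrm{sk}\chi$; the two extra downwards-closure disjuncts of $\xi^*$ would break this clean correspondence, so they are best avoided at this stage. Throughout I use that $S$ is consistent, which is part of the standing assumption under which the VB jump is defined.

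For $S = \Theta(S)$: one inclusion is automatic, since $S \subseteq \Theta(S)$ holds for every set by Proposition \ref{theta is always sound}. For the converse, suppose $\#\varphi \in \Theta(S)$. Unwinding Definition \ref{arithmetical operator}, there is a $\chi$ with $\nat \vDash \xi(\corn{\chi}, S)$ and $\pat \vdash \chi \ra \varphi$. Since $\xi$ characterizes Strong Kleene satisfaction, this says exactly $S \vDash_\mrm{sk}\chi$ together with $\pat \vdash \chi \ra \varphi$, which is precisely the defining condition \eqref{eq:sskst} for $\#\varphi \in \mrm{SSK}(S)$. By the inclusion $\mrm{SSK}(S) \subseteq \mrm{VB}(S)$ --- valid for every consistent $S$ and established in the first half of Proposition \ref{prop:ssk and vb} from the facts that Strong Kleene satisfaction entails VB-satisfaction and that VB-truth is closed under $\pat$-consequence --- we conclude $\#\varphi \in \mrm{VB}(S) = S$. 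Hence $\Theta(S) \subseteq S$ and $S = \Theta(S)$.

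The step to $\Theta^*$ is then immediate: VB fixed points are $\T$- and $\neg\T$-downwards closed (as recorded earlier, in the proof that $\mrm{I}_\Theta = \mrm{I}_{\Theta^*} = \mrm{I}_\mrm{vb}$), so the converse direction of Lemma \ref{prop:thetastar implies theta} applies and yields $S = \Theta^*(S)$. For classical soundness, I would use that $S$ is consistent, whence $S \cap S^- = \varnothing$ and $S$ is itself an admissible VB-precisification of $S$. Thus for any $\#\varphi \in S = \mrm{VB}(S)$ we have $S \vDash_\mrm{vb}\varphi$, and instantiating the precisification quantifier in \eqref{eq:svb} at $X' = S$ gives $S \vDash \varphi$.

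There is no serious obstacle once the routing is chosen correctly: the whole argument chains together three already-proved facts (soundness of $\Theta$, the inclusion $\mrm{SSK} \subseteq \mrm{VB}$, and the $\Theta \to \Theta^*$ direction of Lemma \ref{prop:thetastar implies theta}) plus the elementary observation about $X' = S$. The one point to guard against is the temptation to prove $S = \Theta^*(S)$ directly: there the downwards-closure disjuncts of $\xi^*$ (e.g.\ passing from $\#\T\corn{\psi} \in S$ to $\#\psi$) do not correspond to Strong Kleene satisfaction, so the clean SSK route is unavailable and one would instead be forced into a clause-by-clause induction on positive complexity using the full Kripke-set behaviour of $S$. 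Going through $\Theta$ sidesteps this entirely.
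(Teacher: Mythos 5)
There is a genuine gap, and it sits at the load-bearing step of your argument: the claim that $\nat\vDash\xi(\corn{\chi},S)$ says ``exactly'' $S\vsk\chi$, from which you infer $\Theta(S)\subseteq\mrm{SSK}(S)$. This equivalence is false. The formula $\xi(x,X)$ expresses the \emph{one-step} Strong Kleene jump: its compositional clauses test membership of the immediate constituents in the parameter $X$, not their Strong Kleene satisfaction, and the two notions agree only when $X$ is itself a Strong Kleene fixed point --- which a VB fixed point never is. Concretely, every VB fixed point $S$ contains the penumbral truth $\#(\lambda\vee\neg\lambda)$ while $\#\lambda\notin S$ and $\#\neg\lambda\notin S$; hence $\nat\vDash\xi(\corn{(\lambda\vee\neg\lambda)\wedge(\lambda\vee\neg\lambda)},S)$ by the conjunction clause, even though $S\nvDash_\mrm{sk}(\lambda\vee\neg\lambda)\wedge(\lambda\vee\neg\lambda)$. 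Your identification of $\xi$ with $\vDash_\mrm{sk}$, and the resulting inclusion $\Theta(S)\subseteq\mrm{SSK}(S)$, is precisely the flaw in Stern's Lemma 3.4 that Section \ref{sec:problems} is devoted to exposing (compare Proposition \ref{theta is always sound} with the observation that $S\nsubseteq\mrm{SSK}(S)$ for $S=\{\#(\tau_0\vee\tau_1)\}$). As a sanity check: were your step valid, your own chain would give $S\subseteq\Theta(S)\subseteq\mrm{SSK}(S)\subseteq\mrm{VB}(S)=S$, so every VB fixed point would be an $\mrm{SSK}$ fixed point --- settling, against the authors' stated conjecture, the open problem at the end of the paper.

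The repair is to drop the $\mrm{SSK}$ detour and use the VB fixed-point property directly, which is what the paper does: given $\nat\vDash\xi^*(\corn{\chi},S)$, a case analysis on the disjuncts shows $\#\chi\in\mrm{VB}(S)$, because membership of the constituents in $S=\mrm{VB}(S)$ means they are VB-satisfied in every admissible precisification, and VB-satisfaction is closed under the relevant clauses (conjunction, the $\omega$-rule for the universal clause, and $\T$-Intro/Elim, $\neg\T$-Intro/Elim for the truth clauses); closure of $\mrm{VB}$ under $\pat$-provability then yields $\Theta^*(S)\subseteq S$ (cf.\ Lemma \ref{lemma: vb soundness preserved} for the $\xi$-version on $\mrm{VB}$-sound sets). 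Note that, contrary to your closing worry, the two downwards-closure disjuncts of $\xi^*$ cause no difficulty --- $\T$-Elim and $\neg\T$-Elim handle them, and no induction on positive complexity is needed since $\xi^*$ references constituents via membership in $S$ rather than recursively --- so the paper proves $S=\Theta^*(S)$ first and gets $S=\Theta(S)$ from the easy direction of Lemma \ref{prop:thetastar implies theta}, reversing your routing. Your remaining steps are fine: the passage from $\Theta$ to $\Theta^*$ via downwards closure and the converse direction of Lemma \ref{prop:thetastar implies theta} would be legitimate if $S=\Theta(S)$ had been secured, and your classical-soundness argument --- $S$ is, by consistency, an admissible precisification of itself, so instantiate $X'=S$ in \eqref{eq:svb} --- is correct and more explicit than the paper's.
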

\begin{proof}
Assume $S=\mrm{VB}(S)$. We want to show  $S=\Theta^*(S)$. $S\subseteq \Theta^*(S)$ follows immediately, since we know that every set of formulae is $\Theta^*$- sound. For the other direction, we induct on the clauses of $\xi^*(x,X)$, and employ the VB fixed-point property of $S$ in order to prove that $\xi^*(x, S)\Ra x\in S$. It is easy to see why this holds: any fixed point of VB is closed under $\omega$-logic (and so it is closed under the clauses for the connectives and quantifiers in $\xi^*$), and is closed under $\T$-Intro, $\T$-Elim, $\neg \T$-Intro and $\neg \T$-Elim, and so closed under the $\T$-clauses. Furthermore, VB fixed points are also closed under $\pat$-provability, so $x\in\Theta^*(S)\Ra x\in S$. 

Having shown $S=\Theta^*(S)$, that $S=\Theta(S)$ just follows from Lemma \ref{prop:thetastar implies theta}. Also, that $S$ is classically sound follows immediately, since $S$ is a VB-fixed point. 
\end{proof}

In fact, we can show that any least fixed point generated by a $\mrm{VB}$-sound set corresponds to the least fixed point of $\Theta$ generated from that set. To prove that, we need two observations. 

\begin{lemma}\label{lemma: vb soundness preserved}
Let $S\subseteq \mrm{Sent}_{\lt}$ and $S\subseteq \mrm{VB}(S)$. Then, $\Theta(S)\subseteq \mrm{VB}(S)$.
\end{lemma}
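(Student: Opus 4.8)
The plan is to unfold the definition of membership in $\Theta(S)$ and reduce the claim to two facts about $\mrm{VB}$-truth. By Definition \ref{arithmetical operator}, $\#\varphi\in\Theta(S)$ means that there is a $\psi$ with $\xi(\corn\psi,S)$ and $\pat\vdash\psi\ra\varphi$. I would therefore first establish (a) that $\xi(\corn\psi,S)$ already forces $S\vDash_\mrm{vb}\psi$, and then (b) that $\mrm{VB}$-truth is closed under $\pat$-provability, so that $\pat\vdash\psi\ra\varphi$ lets us pass from $S\vDash_\mrm{vb}\psi$ to $S\vDash_\mrm{vb}\varphi$, i.e. $\#\varphi\in\mrm{VB}(S)$.

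For (a), the key observation is that $\xi(\cdot,X)$ is the \emph{one-step} Strong Kleene jump: each of its disjuncts asserts that $\psi$ has a given outermost shape and that the relevant immediate constituents (or the value $t^\circ$) are \emph{members} of $X=S$. Hence no induction on complexity is needed; it suffices to run through the disjuncts of $\xi$ and, in each case, verify $X'\vDash\psi$ for an arbitrary admissible precisification $X'$ (i.e. $X'\supseteq S$ with $X'\cap S^-=\varnothing$). In the arithmetical case $\psi\in\mrm{True}_0$ this is immediate since the arithmetical part of $(\nat,X')$ is fixed. In the propositional cases (e.g. $\psi=\alpha\wedge\beta$ with $\corn\alpha,\corn\beta\in S$, or $\psi=\neg(\alpha\vee\beta)$ with $\corn{\neg\alpha},\corn{\neg\beta}\in S$) I would use the hypothesis $S\subseteq\mrm{VB}(S)$ to turn membership of the constituents into $S\vDash_\mrm{vb}$ of them, and hence $X'$-satisfaction, from which $X'\vDash\psi$ follows by the classical clauses. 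The quantifier cases (e.g. $\psi=\forall v\,\alpha$ with every $\corn{\alpha(\bar n/v)}\in S$) are the same, closing under the $\omega$-rule using the standardness of $\nat$. For $\psi=\T t$ with $t^\circ\in S$, one does not even need the soundness hypothesis: $t^\circ\in S\subseteq X'$ gives $X'\vDash\T t$ directly.

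The step I expect to be the genuine obstacle -- and the only place where the \emph{specific} $\mrm{VB}$ admissibility condition is used -- is the clause $\psi=\neg\T t$. Here $\xi(\corn{\neg\T t},S)$ holds because $\subdot\neg t^\circ\in S$ or $\neg\mrm{Sent}(t^\circ)$, and neither of these is a constituent of $S$ to which I can apply $\mrm{VB}$-soundness. Instead I would note that in both subcases $t^\circ\in S^-$: in the first by the definition $S^-=\{\#\chi\sth\#\neg\chi\in S\}\cup\{n\sth\nat\vDash\neg\mrm{Sent}(\bar n)\}$, and in the second by the second disjunct of that definition. Since $X'$ is admissible, $X'\cap S^-=\varnothing$ forces $t^\circ\notin X'$, whence $X'\vDash\neg\T t$. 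This is exactly the point flagged earlier in the paper where the trivial condition of $\mrm{sv}$ would break down, so it is essential that $\mrm{VB}$ forbids precisifications from realizing the negative commitments recorded in $S^-$.

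Finally, for (b), I would observe that for every admissible $X'$ the model $(\nat,X')$ satisfies $\pat$, since full induction over $\lt$ holds in any expansion of the standard model $\nat$. Thus if $S\vDash_\mrm{vb}\psi$ and $\pat\vdash\psi\ra\varphi$, then for each admissible $X'$ we have $X'\vDash\psi$ and $(\nat,X')\vDash\psi\ra\varphi$, hence $X'\vDash\varphi$; so $S\vDash_\mrm{vb}\varphi$. Combining (a) and (b) yields $\#\varphi\in\mrm{VB}(S)$ and therefore $\Theta(S)\subseteq\mrm{VB}(S)$. (If there are no admissible precisifications at all, i.e. $S\cap S^-\neq\varnothing$, every sentence is vacuously $\mrm{VB}$-true and the inclusion is trivial, so the case analysis may be read uniformly.)
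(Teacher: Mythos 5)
Your proof is correct and follows essentially the same route as the paper's: first checking that $\xi(\corn{\psi},S)\Rightarrow \#\psi\in\mrm{VB}(S)$ whenever $S$ is $\mrm{VB}$-sound, then invoking closure of $\mrm{VB}$-truth under $\pat$-provability. The paper leaves the case analysis implicit, and your elaboration of it---in particular the $\neg\T t$ clause, where the admissibility condition $X'\cap S^-=\varnothing$ rather than $\mrm{VB}$-soundness does the work---correctly pinpoints the one place where the specific $\mrm{VB}$ scheme is essential.
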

\begin{proof}
The lemma can be proved by first checking 
\begin{equation*}
    \xi(\corn{\vphi}, X) \Ra \#\vphi\in \mrm{VB}(X)
\end{equation*}
whenever $X$ is $\mrm{VB}$-sound. The closure of the $\mrm{VB}$ operator under $\mrm{PAT}$-consequence yields the claim. 

\end{proof}
\begin{lemma}\label{prop:pat prov implies theta fp}
Given a set $S\subseteq \mrm{Sent}_{\lt}$, we write $\pat\cup \{ \T\corn{\psi} \sth \#\psi \in S\}\vdash^\omega \vphi$ iff $\vphi$ is provable in $\omega$-logic from $\pat$ and each sentence of the form $\T\corn{\psi}$, $\#\psi\in S$, as axioms. Now, let $X=\Theta(X)$. Then:
\begin{equation*}
\pat\cup \{ \T\corn{\psi} \sth \#\psi \in X\}\vdash^\omega \vphi \Ra \#\vphi \in X
\end{equation*}
\end{lemma}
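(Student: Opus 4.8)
The plan is to argue by induction on the height of the $\omega$-derivation witnessing $\pat\cup \{\T\corn{\psi}\sth \#\psi\in X\}\vdash^\omega\vphi$, showing at each node that the code of the derived sentence lies in $X$. Before starting, I would record two closure properties of $X$ that flow directly from $X=\Theta(X)$ and the shape of $\theta$. First, $X$ is closed under $\pat$-provability from premises in $X$: if $\#\psi_1,\dots,\#\psi_k\in X$ and $\pat\vdash \psi_1\wedge\cdots\wedge\psi_k\ra\vphi$, then $\#\vphi\in X$. This uses the conjunction clause of $\xi$ to collect the $\psi_i$ into a single conjunction whose code is in $X=\Theta(X)$, and then the $\mrm{Pr}_{\pat}$ component of $\theta$ to pass to $\vphi$. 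Second, $X$ is closed under $\T$-introduction: if $\#\psi\in X$, then taking $t=\mrm{num}(\#\psi)$ in the truth clause of $\xi$ (so that $t^\circ=\#\psi\in X$) yields $\xi(\corn{\T\corn{\psi}},X)$, whence $\#\T\corn{\psi}\in\Theta(X)=X$.

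With these in hand, the base cases are immediate. If $\vphi$ is a logical or $\pat$-axiom, then $\pat\vdash\vphi$; since $\xi(\corn{0=0},X)$ holds unconditionally (the $\mrm{True}_0$ clause) and $\pat\vdash(0=0)\ra\vphi$, we get $\#\vphi\in\Theta(X)=X$. If $\vphi$ is one of the truth-ascription axioms $\T\corn{\psi}$ with $\#\psi\in X$, then $\#\vphi\in X$ by the $\T$-introduction property. The finitary inference steps (e.g.\ modus ponens) are handled uniformly by closure under $\pat$-provability: by the induction hypothesis the premises have codes in $X$, the conclusion is a first-order consequence of them, and since $\pat$ extends first-order logic the corresponding implication is $\pat$-provable.

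The heart of the argument is the $\omega$-rule step: from $\#\psi(\bar n)\in X$ for every $n\in\omega$ (supplied by the induction hypothesis) I must conclude $\#\forall x\,\psi(x)\in X$. Here I would first upgrade ``for every numeral'' to ``for every closed term'': for any closed term $t$ we have $\pat\vdash \psi(t)\lra\psi(\overline{t^\circ})$, so closure under $\pat$-provability turns $\#\psi(\overline{t^\circ})\in X$ into $\#\psi(t)\in X$. This makes the universal-quantifier clause of $\xi$ applicable, giving $\xi(\corn{\forall x\,\psi},X)$ and therefore $\#\forall x\,\psi\in\Theta(X)=X$.

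I expect this $\omega$-rule case to be the main obstacle, and it is precisely the point worth emphasizing: the Strong-Kleene universal clause built into $\xi$ (and hence into $\Theta$) supplies exactly the $\omega$-closure that the bare $\mrm{SSK}$ construction lacks (cf.\ Proposition \ref{failure of omega logic}). The only delicate bookkeeping is the numeral-to-closed-term passage just described, together with checking that the formulation of $\omega$-logic in play can be taken to operate on sentences (treating any free variables via their universal closures), so that the clauses of $\xi$, stated for codes of sentences, apply at each node.
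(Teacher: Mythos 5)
Your proof is correct and follows essentially the same route as the paper's, which simply declares the result immediate by induction on the length of the $\omega$-derivation, noting that the $\pat$-axioms and the axioms $\T\corn{\psi}$ (for $\#\psi\in X$) land in $X$ by the fixed-point property and that $\Theta$-fixed points are closed under modus ponens and $\omega$-logic. The closure properties you spell out -- the conjunction-plus-$\mrm{Pr}_{\pat}$ device for finitary steps, the $\T$-clause of $\xi$ for the truth axioms, and the numeral-to-closed-term upgrade feeding the universal clause of $\xi$ -- are exactly the details the paper leaves implicit.
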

\begin{proof}
This is immediate by induction on the length of the ($\omega$)-derivation of $\vphi$ in  $\pat\cup \{ \T\corn{\psi} \sth \#\psi \in S\}$. 
The axioms of $\pat$ are given by the $\theta$ formula, and the axioms of the form $\T\corn{\psi}$, $\#\psi\in X$, follow from the fixed-point property. Since $\Theta$ fixed points are closed under modus ponens and $\omega$-logic, the claim follows.
\end{proof}

\begin{prop}\label{lemma:vb_sound_least_theta}
Let $X\subseteq \mrm{VB}(X)$. Consider the following constructions: 
\begin{align*}
& \Gamma^{\mrm{vb}}_0= X & & \Gamma^{\Theta}_0= X\\
& \Gamma^{\mrm{vb}}_{\alpha+1}= \mrm{VB}(\Gamma^{\mrm{vb}}_{\alpha}) & & \Gamma^{\Theta}_{\alpha+1}= \Theta(\Gamma^{\Theta}_{\alpha})\\
& \Gamma^{\mrm{vb}}_\lambda=\bigcup_{\beta<\lambda} \Gamma^{\mrm{vb}}_\beta, \text{ for $\lambda$ a limit}& & \Gamma^{\Theta}_\lambda=\bigcup_{\beta<\lambda} \Gamma^{\Theta}_\beta, \text{ for $\lambda$ a limit}\\
\end{align*}
Then, $\bigcup_{\beta\in \mrm{On}} \Gamma^{\mrm{vb}}_\beta=\bigcup_{\beta\in \mrm{On}} \Gamma^{\Theta}_\beta$.
\end{prop}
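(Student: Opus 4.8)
The plan is to put $Y=\bigcup_{\beta\in\mrm{On}}\Gamma^{\mrm{vb}}_\beta$ and $Z=\bigcup_{\beta\in\mrm{On}}\Gamma^{\Theta}_\beta$ and to prove the two inclusions separately. Since $\mrm{VB}$ and $\Theta$ are monotone, and $X\subseteq\mrm{VB}(X)$ by hypothesis while $X\subseteq\Theta(X)$ by Proposition \ref{theta is always sound}, both transfinite sequences are increasing; hence $Y$ is the least $\mrm{VB}$-fixed point containing $X$ and $Z$ is the least $\Theta$-fixed point containing $X$. For the inclusion $Z\subseteq Y$ I would argue by transfinite induction that $\Gamma^{\Theta}_\alpha\subseteq\Gamma^{\mrm{vb}}_\alpha$. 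Each $\Gamma^{\mrm{vb}}_\alpha$ is $\mrm{VB}$-sound (as $X$ is and $\mrm{VB}$ is monotone), so Lemma \ref{lemma: vb soundness preserved} gives $\Theta(\Gamma^{\mrm{vb}}_\alpha)\subseteq\mrm{VB}(\Gamma^{\mrm{vb}}_\alpha)=\Gamma^{\mrm{vb}}_{\alpha+1}$; combining this with the monotonicity of $\Theta$ and the inductive hypothesis settles the successor step, while limits are immediate. Note that this also shows every $\Gamma^{\Theta}_\alpha$, and hence $Z$, is a subset of $Y$, so in particular $Z$ is consistent.

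For the reverse inclusion the cleanest route is to show that $Z$ is \emph{itself} a $\mrm{VB}$-fixed point; since $X\subseteq Z$, minimality of $Y$ then yields $Y\subseteq Z$. First I would check that $Z$ is $\mrm{VB}$-sound. The key observation is that $\Theta$ preserves $\mrm{VB}$-soundness: if $S\subseteq\mrm{VB}(S)$, then using $S\subseteq\Theta(S)$ and the monotonicity of $\mrm{VB}$, Lemma \ref{lemma: vb soundness preserved} gives $\Theta(S)\subseteq\mrm{VB}(S)\subseteq\mrm{VB}(\Theta(S))$. A transfinite induction (limits again immediate by monotonicity of $\mrm{VB}$, and all stages consistent since they sit inside $Y$) then shows that $Z$ is $\mrm{VB}$-sound, i.e. $Z\subseteq\mrm{VB}(Z)$.

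It remains to prove $\mrm{VB}(Z)\subseteq Z$, and this is where the main difficulty lies. Given $\#\varphi\in\mrm{VB}(Z)$, the formula $\varphi$ holds in every $(\nat,S')$ with $Z\subseteq S'$ and $S'\cap Z^-=\varnothing$; since the constraint $S'\cap Z^-=\varnothing$ is exactly what is forced by the sentences $\neg\T(\bar n)$ for $n\in Z^-$ (these also bar non-sentences), completeness of $\omega$-logic tells us that $\varphi$ is $\omega$-derivable from $\pat$ together with the \emph{positive} ascriptions $\{\T\corn{\psi}\mid\#\psi\in Z\}$ \emph{and} the \emph{negative} ascriptions $\{\neg\T(\bar n)\mid n\in Z^-\}$. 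The obstacle is precisely these negative axioms: the consistency side-condition built into $\vDash_\mrm{vb}$ is not positive information, whereas Lemma \ref{prop:pat prov implies theta fp} only licenses passing from $\omega$-provability over the positive ascriptions to membership in $Z$; I expect reconciling these to be the crux. The way I would dissolve it exploits the fact that $Z$, being a $\Theta$-fixed point, is automatically $\neg\T$-downwards closed: for $n=\#\psi\in Z^-$ we have $\#\neg\psi\in Z$, and the corresponding disjunct of $\xi$ witnesses $\#\neg\T\corn{\psi}\in\Theta(Z)=Z$ (the non-sentence case is handled identically by the $\neg\mrm{Sent}$ disjunct). Thus the codes of \emph{all} the axioms used in the derivation — the $\pat$-theorems, the positive ascriptions $\T\corn{\psi}$ (whose codes lie in $Z$ by the $\T$-clause of $\xi$), and the negative ascriptions $\neg\T(\bar n)$ — already lie in $Z$. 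The membership induction behind Lemma \ref{prop:pat prov implies theta fp} (closure of the $\Theta$-fixed point $Z$ under modus ponens and the $\omega$-rule) therefore extends verbatim to this enlarged axiom set and delivers $\#\varphi\in Z$. This gives $\mrm{VB}(Z)\subseteq Z$, so $Z$ is a $\mrm{VB}$-fixed point containing $X$, whence $Y\subseteq Z$; combined with the first inclusion, $Y=Z$, which is the claim.
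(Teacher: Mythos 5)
Your proof is correct, and its skeleton is the same as the paper's: one inclusion by comparing the two hierarchies stagewise via Lemma \ref{lemma: vb soundness preserved} and monotonicity, the other via Lemma \ref{prop:pat prov implies theta fp} together with the completeness of $\omega$-logic. There are two differences. The first is organizational: the paper argues by reductio --- writing $S:=\bigcup_{\beta\in\mrm{On}}\Gamma^{\Theta}_\beta$, it supposes $S\neq\bigcup_{\beta\in\mrm{On}}\Gamma^{\mrm{vb}}_\beta$, picks $\#\vphi\in\mrm{VB}(S)\setminus S$, applies the contrapositive of Lemma \ref{prop:pat prov implies theta fp} to get $\pat\cup\{\T\corn{\psi}\sth\#\psi\in S\}\nvdash^\omega\vphi$, and extracts a countermodel $(\nat,S')$ with $S'\supseteq S$ --- whereas you prove directly that $Z$ is a $\mrm{VB}$ fixed point containing $X$ and conclude by minimality of $Y$; this is a cosmetic difference. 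The second is substantive, and to your credit: the paper's completeness step only secures $S'\supseteq S$, but to contradict $\#\vphi\in\mrm{VB}(S)$ the countermodel must also be \emph{admissible} in the sense of \eqref{eq:svb}, i.e.\ satisfy $S'\cap S^-=\varnothing$, a point on which the published proof is silent. You confront this head-on by adding the negative ascriptions $\neg\T(\bar n)$ for $n\in Z^-$ to the axiom set before invoking completeness, and then observing that the codes of these extra axioms already lie in $Z$: for $n=\#\psi\in Z^-$ with $\#\neg\psi\in Z$ the $\neg\T$-disjunct of $\xi$ puts $\#\neg\T\corn{\psi}$ into $\Theta(Z)=Z$, and the $\neg\mrm{Sent}$ disjunct handles non-sentences, so the membership induction behind Lemma \ref{prop:pat prov implies theta fp} (closure of $Z$ under modus ponens and the $\omega$-rule) does extend verbatim to the enlarged axiom set. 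In short, your argument is not merely a variant but supplies exactly the repair that the paper's terse treatment of the admissibility condition needs; what the paper's reductio formulation buys in brevity, yours buys in making the crux --- the passage from $\omega$-consequence over $Z$ to $\mrm{VB}$-admissible countermodels --- fully explicit and watertight.
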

\begin{proof}
For clarity of notation, let $\bigcup_{\beta\in \mrm{On}} \Gamma^{\Theta}_\beta = S$. We have that $\Theta(S)=S$, and also $\mrm{VB}(S)\supseteq \Theta(S)$. We assume, for reductio, that $S\neq \bigcup_{\beta\in \mrm{On}} \Gamma^{\mrm{vb}}_\beta$. By Lemma \ref{lemma: vb soundness preserved}, we know that $\Theta(X)\subseteq \mrm{VB}(X)$; and, in fact, for any $\alpha\in \mrm{On}$, $\Theta^\alpha(X)\subseteq \mrm{VB}(\Theta^\alpha (X))$ (where $\Theta^\alpha$ stands for $\alpha$-th iterations of $\Theta$). Therefore, $S\subseteq \mrm{VB}(S)$. But then $S\neq \bigcup_{\beta\in \mrm{On}} \Gamma^{\mrm{vb}}_\beta$ implies  $\mrm{VB}(S)\supsetneq S$. So there must be some sentence $\vphi$ s.t. $\#\vphi \in \mrm{VB}(S)$ but $\#\vphi \notin \Theta(S)=S$. By Lemma \ref{prop:pat prov implies theta fp}, we obtain: 
\begin{equation*}
\pat\cup \{ \T\corn{\psi} \sth \#\psi \in S\}\nvdash^\omega \vphi
\end{equation*}
By the completeness theorem, there must be a standard model $(\nat, S')$, with $S'\supseteq S$, s.t. $(\nat, S')\nvDash\vphi$. But then $\#\vphi\notin \mrm{VB}(S)$, yielding a contradiction. 
\end{proof}

Let us now examine in greater depth the connection between the class of classically sound $\Theta^*$ fixed points and the $\mrm{VB}$ fixed points. First, let us denote: 
\begin{equation*}
\mc{Z}:=\{ S \sth S=\Theta^*(S) \; \& \; S \; \text{is classically sound}\}
\end{equation*}
We will consider this class of fixed points and refer to them in the usual terms. In particular, we say that some fixed point $S$ is \emph{$\mc{Z}$-maximal} iff there is no $S'\in \mc{Z}$ s.t. $S\subsetneq S'$. By adapting a result from \cite{stern_2018} to the present setting, we can show that the $\mc{Z}$-maximal fixed points and the $\mrm{VB}$ maximal fixed points are not the same. 

\begin{dfn}
Let $\Phi\colon \mc{P}(\omega)\to \mc{P}(\omega)$ be an operator. 
Recall that $X^-:=\{\#\neg\vphi \sth \#\vphi \in X\}\cup \{n\in\omega \sth \nat \vDash \neg\mrm{Sent}_{\lt}(\bar n)\}$ and $Y^-:=\{\#\neg\vphi \sth \#\vphi \in Y\} \cup \{n\in\omega \sth \nat \vDash \neg\mrm{Sent}_{\lt}(\bar n)\}$. We define:
\begin{itemize}
    \item $X,Y$ are \emph{compatible} iff $X\cap Y^-=\varnothing$.
    \item $X$ is \emph{$\Phi$-intrinsic} (in short: intrinsic) iff $X\subseteq \Phi(X)$ and $\mrm{CON}(X)$ and for all $Y$ s.t. $Y\subseteq \Phi(Y)$, $X$ and $Y$ are compatible.
 \end{itemize}
The set $\{ n \in\omega \sth \exists X ( X \text{ is intrinsic } \& \; n \in X)\}$ is known as the \emph{maximal intrinsic fixed point} of $\Phi$. A formula $\vphi$ is said to be $\Phi$-\emph{intrinsic} iff its code is in the maximal intrinsic fixed point of $\Phi$.
\end{dfn}
Notice that for the familiar operators considered in this paper, the existence of the maximal intrinsic fixed point follows from the existence of the relevant minimal fixed points, as these minimal fixed points are themselves intrinsic. 

\begin{lemma}\label{different max intrin fps}
The $\mc{Z}$-maximal intrinsic fixed point and the maximal intrinsic fixed point of $\mrm{VB}$ differ. Therefore, 
\begin{equation*}
    \{ S \sth S\in \mc{Z} \; \& \; S \text{ is maximal}\}\neq \{ S \sth S=\mrm{VB}(S) \; \& \; S \text{ is maximal}\}.
\end{equation*}
\end{lemma}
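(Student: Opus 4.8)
The plan is to treat the statement as two assertions and to obtain the displayed inequality of classes of maximal fixed points (the ``Therefore'') as a consequence of the inequality of the two maximal intrinsic fixed points. For that implication I would first record the structural fact, standard in this Kripke/supervaluational setting and available here because the minimal fixed points are themselves intrinsic (cf.\ the remark following the definition of intrinsicity), that for a class $\mathfrak{F}$ of consistent fixed points of a monotone, compact jump the maximal intrinsic fixed point $M_{\mathfrak F}$ equals the intersection of the $\mathfrak{F}$-maximal fixed points,
\begin{equation*}
M_{\mathfrak F}=\bigcap\{\,S : S\in\mathfrak{F}\ \&\ S\text{ is }\mathfrak{F}\text{-maximal}\,\}.
\end{equation*}
The inclusion $M_{\mathfrak F}\subseteq\bigcap$ uses that compatible fixed points possess a common fixed-point extension, so by maximality $M_{\mathfrak F}$ sits inside every maximal $S$; the reverse inclusion uses that if $\#\vphi$ lies in every maximal fixed point then $\#\neg\vphi$ can lie in none (consistency), so $\#\vphi\in M_{\mathfrak F}$. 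Applying this with $\mathfrak{F}=\mc{Z}$ and with $\mathfrak{F}=\{S : S=\mrm{VB}(S)\}$: if the two collections of maximal fixed points coincided, the two intersections would coincide, so $M_{\mc Z}=M_\mrm{vb}$. Contraposition delivers the ``Therefore''.

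For the core claim, $M_{\mc Z}\neq M_\mrm{vb}$, the strategy is to exhibit a sentence lying in exactly one of them, exploiting the asymmetry already isolated in this section. By Lemma~\ref{every fp of vb is a fp of thetastar} every $\mrm{VB}$ fixed point is a classically sound fixed point of $\Theta^*$, so $\{S : S=\mrm{VB}(S)\}\subseteq\mc{Z}$, and the inclusion is \emph{proper}: otherwise $\mrm{VB}$ would be $\nat$-categorically axiomatised by $\indt$ via Proposition~\ref{prop:theta-star_n-cat-it}, contradicting the recursion-theoretic complexity of supervaluational truth. Because $\mc{Z}$ has strictly more members, $\mc{Z}$-intrinsicity is the more demanding notion, and the difference between the two maximal intrinsic fixed points should be located in an ``extra'' fixed point $S\in\mc{Z}$ that is not a $\mrm{VB}$ fixed point. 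The aim is to show that such an $S$ decides some $\rho$ whose negation is $\mrm{VB}$-intrinsic, so that $\#\neg\rho\in M_\mrm{vb}$ while $\#\neg\rho\notin M_{\mc Z}$ (as $\#\rho\in S\in\mc{Z}$).

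Concretely I would adapt the separating construction of \cite{stern_2018}: using a diagonal sentence $\rho$ together with Lemma~\ref{lem:ttstarind}(1), start from a classically sound seed that freezes $\rho$ and close under $\Theta^*$ to obtain $S\in\mc{Z}$ with $\#\rho\in S$, while arguing that in \emph{every} $\mrm{VB}$ fixed point the admissible precisifications are uniformly forced to falsify $\rho$, so that $\#\neg\rho$ is true in all $\mrm{VB}$ fixed points and $\#\rho$ in none. The conceptual point driving the choice of $\rho$ is that $\Theta^*$ only collects $\pat$-consequences of the Strong Kleene jump — first-order penumbral material — whereas $\mrm{VB}$ collects everything true across all precisifications, that is full, second-order penumbral material; the separating $\rho$ must live precisely in this gap.

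The main obstacle is exactly the explicit construction of $\rho$. Classical soundness pins every fixed point in both classes to the standard model $\nat$, so the first-order closure of $\Theta^*$ and the supervaluational closure of $\mrm{VB}$ already agree on all grounded sentences, on all first-order penumbral truths, and — because of the inductive universal clause in $\xi^*$ — even under the $\omega$-rule; consequently the naive candidates (truth-tellers, $\mrm{TI}(\tau,\varepsilon_0)$, putatively $\omega$-inconsistent configurations) fail to separate, being realisable in both classes or valid in every $(\nat,S)$. The delicate work is therefore to produce a genuinely second-order penumbral $\rho$ that $\mrm{VB}$ decides uniformly across precisifications yet a classically sound fixed point of $\Theta^*$ can consistently reverse, and to verify both halves of this behaviour; this is where the bulk of the argument, following and adapting \cite{stern_2018}, will lie.
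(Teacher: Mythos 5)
Your handling of the ``Therefore'' direction is correct and matches the paper: the maximal intrinsic fixed point is the intersection of the maximal fixed points of the scheme (the paper cites \cite[Th.2.18.2]{visser_1984} for exactly this), and contraposition does the rest. The genuine gap is in the core claim: you never produce the separating sentence $\rho$, and you say so yourself --- the ``explicit construction of $\rho$'' is flagged as the main obstacle and deferred to future ``delicate work.'' As it stands, then, the proposal is a plan for a proof, not a proof: the one step that carries all the mathematical content of the lemma is missing. Your auxiliary heuristic does not close this gap either. From the proper inclusion $\{S \sth S=\mrm{VB}(S)\}\subsetneq\mc{Z}$ (which is correct, via Lemma \ref{every fp of vb is a fp of thetastar} and Proposition \ref{prop:theta-star_n-cat-it}) it does not follow that the two maximal \emph{intrinsic} fixed points differ: two distinct classes of fixed points can have identical intersections of their maximal members. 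Indeed the explicit ``extra'' fixed point of Lemma \ref{lemma:example of different vb-theta fp}, seeded with $\#(\neg\tau_1\vee\neg\tau_2)$, does not yield a separating sentence here, because truth-tellers are not $\mrm{VB}$-intrinsic: some $\mrm{VB}$ fixed points contain $\#\tau_i$ and others $\#\neg\tau_i$, so neither $\#(\neg\tau_1\vee\neg\tau_2)$ nor its negation lies in $\mrm{I}^+_\mrm{vb}$.

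The paper avoids this obstacle entirely by arguing nonconstructively, via recursion-theoretic complexity --- precisely the move your (accurate) diagnosis of why all naive candidates fail should have suggested. One computes an upper bound on the $\mc{Z}$-maximal intrinsic fixed point: since $\xi^*(x,X)$ is arithmetical, $X=\Theta^*(X)$ is $\Delta^1_1$, classical soundness is $\Delta^1_1$, and compatibility is expressed through the $\Sigma^1_1$ set $\mrm{V}^+_{\Theta^*}$ of sentences true in some $\Theta^*$ fixed point; hence membership in $\mrm{I}^+_{\mc{Z}}$ is at most $\Sigma^1_1$-in-a-$\Pi^1_1$-parameter. By contrast, \cite{burgess_1988} showed that $\mrm{I}^+_\mrm{vb}$ is exactly $\Delta^1_2$-in-a-$\Pi^1_2$-parameter, so the two sets differ without any witness being exhibited. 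If you want to rescue your constructive route, you would owe not only the construction of $\rho$ but also a proof that $\#\neg\rho$ is $\mrm{VB}$-intrinsic (membership in \emph{every} maximal $\mrm{VB}$ fixed point), which is a $\Delta^1_2$-hard property in general --- a strong hint that no simple diagonal sentence will do the job.
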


\begin{proof}
The first claim follows by complexity considerations. The upper-bound for the $\mc{Z}$-maximal intrinsic fixed point ($\mrm{I}^+_{\mc{Z}}$) can be obtained as follows: 
\begin{equation}\label{eqn:complexity iplusz}
    n \in \mrm{I}^+_{\mc{Z}} \Leftrightarrow \exists X (X=\Theta^*(X) \; \& \; X \text{ is classically sound} \; \& \; \forall m\in X (\subdot \neg m \notin \mrm{V}^+_{\Theta^*})  \; \& \; n\in X)
\end{equation}
Here, $\mrm{V}^+_{\Theta^*}$ stands for the set of sentences true in some $\Theta^*$ fixed point. Since the formula $\xi^*(x,X)$ of the $\Theta^*$ jump is arithmetical, $X=\Theta^*(X)$ is a $\Delta_1^1$ formula, as the next definition shows: 
\begin{equation*}
    X=\Theta^*(X) \Leftrightarrow \forall n(n \in X \Leftrightarrow \exists y(\xi^*(y,X)\wedge \mrm{Pr}_{\pat}(y\subdot\ra n))
\end{equation*}
It follows that $\mrm{V}^+_{\Theta^*}$ has complexity at most $\Sigma^1_1$:
\begin{equation*}
    n \in V^+_{\Theta^*} \Leftrightarrow \exists X(X=\Theta^*(X) \wedge n\in X)
\end{equation*}
Finally, the property of classical soundness is at most $\Delta^1_1$:
\begin{equation*}
    X \text{ is classically sound } \Leftrightarrow \forall \vphi (\#\vphi \in X \Ra X\vDash \vphi) 
\end{equation*}
where we rely on the well-known fact that classical satisfaction is $\Delta^1_1$.

Following (\ref{eqn:complexity iplusz}), membership in $\mrm{I}^+_{\mc{Z}}$ is at most $\Sigma^1_1$-in-a-($\omega\setminus\Sigma^1_1$)-parameter, i.e., $\Sigma^1_1$-in-a-$\Pi^1_1$-parameter. By contrast, Burgess \cite{burgess_1988} showed that the maximal intrinsic fixed point of the VB scheme ($\mrm{I}^+_\mrm{vb}$) is precisely $\Delta^1_2$-in-a-$\Pi^1_2$-parameter, so $\mrm{I}^+_\mrm{vb}\neq\mrm{I}^+_\mc{Z} $.

The `therefore' part of the claim follows from the fact that the maximal intrinsic point of a scheme or operator is the intersection of all maximal fixed points of the operator---see e.g. \cite[Th.2.18.2]{visser_1984}.
\end{proof}

In fact, we can provide specific examples of simple fixed points that are in $\mc{Z}$ but are not $\mrm{VB}$ fixed points. The following is a variation of an example we have used repeatedly:

\begin{lemma}\label{lemma:example of different vb-theta fp}
Consider the following construction:

\begin{align*}
    &\Gamma_0:=\{\#(\neg\tau_1\vee\neg\tau_2)\}\\
    &\Gamma_{\alpha+1}:=\Theta^*(\Gamma_\alpha)\\
    & \Gamma_\lambda:=\bigcup_{\beta<\lambda}\Gamma_\beta
\end{align*}
Then, $\bigcup_{\alpha\in\mrm{On}}\Gamma_\alpha\in \mc{Z}$ but $\bigcup_{\alpha\in\mrm{On}}\Gamma_\alpha\neq \mrm{VB}(\bigcup_{\alpha\in\mrm{On}}\Gamma_\alpha)$.
\end{lemma}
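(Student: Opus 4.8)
The plan is to verify the two conjuncts separately. Throughout, abbreviate $\sigma := \neg\tau_1 \vee \neg\tau_2$, so that $\Gamma_0 = \{\#\sigma\}$, and write $S := \bigcup_{\alpha\in\mrm{On}}\Gamma_\alpha$. To place $S$ in $\mc{Z}$ I must check $S = \Theta^*(S)$ and that $S$ is classically sound. The first is routine: since every set is $\Theta^*$-sound and $\Theta^*$ is monotone (it is the closure under $\pat$-provability of the positive operator $\xi^*$), the chain $(\Gamma_\alpha)_{\alpha}$ is increasing and stabilizes, so $S$ is a fixed point of $\Theta^*$ — indeed this is exactly how Lemma \ref{lem:ttstarind}(1) describes it. For classical soundness I would invoke Lemma \ref{lem:ttstarind}(1), which reduces the task to checking that the \emph{starting set} $\Gamma_0$ is classically sound. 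This is the one genuine computation: I must show $(\nat,\Gamma_0)\vDash\sigma$. Since $\tau_i$ is a truth-teller, $\pat\vdash \tau_i\lra\T\corn{\tau_i}$, and because $\#\tau_i\neq\#\sigma$ we have $\#\tau_i\notin\Gamma_0$, so $(\nat,\Gamma_0)\not\vDash\T\corn{\tau_i}$ and hence $(\nat,\Gamma_0)\vDash\neg\tau_i$; thus $(\nat,\Gamma_0)\vDash\sigma$. This is precisely why the example uses \emph{negated} truth-tellers — it is what makes $\Gamma_0$ classically sound and thereby keeps $S$ inside $\mc{Z}$. Applying Lemma \ref{lem:ttstarind}(1) then yields $S\in\mc{Z}$.

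For the failure $S\neq\mrm{VB}(S)$ I would exhibit a sentence in $S\setminus\mrm{VB}(S)$, namely $\sigma$ itself. Clearly $\#\sigma\in\Gamma_0\subseteq S$. To see $\#\sigma\notin\mrm{VB}(S)$, recall that $S\vDash_\mrm{vb}\sigma$ requires every precisification $X'\supseteq S$ with $X'\cap S^-=\varnothing$ to satisfy $\sigma$, so it suffices to produce a single compatible precisification falsifying it. Take $X' := S\cup\{\#\tau_1,\#\tau_2\}$. Then $X'\supseteq S$, and I must check $X'\cap S^-=\varnothing$: since $S$ is classically sound it is consistent, giving $S\cap S^-=\varnothing$, and because each $\tau_i$ is a (syntactically atomic) $\T$-ascription rather than a negation, $\#\tau_i\neq\#\neg\psi$ for every $\psi$, so $\#\tau_i\notin S^-$. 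Hence $X'\cap S^-=\varnothing$ and $X'$ is an admissible precisification. But $\#\tau_i\in X'$ gives $(\nat,X')\vDash\T\corn{\tau_i}$, whence $(\nat,X')\vDash\tau_i$ for $i=1,2$, so $(\nat,X')\not\vDash\sigma$. Therefore $\#\sigma\notin\mrm{VB}(S)$, and since $\#\sigma\in S$ we conclude $S\neq\mrm{VB}(S)$.

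The argument is essentially obstruction-free; the only load-bearing points requiring care are (i) the classical-soundness check for $\Gamma_0$, where one must combine the fixed-point equation for the truth-teller with the fact that $(\nat,\Gamma_0)$ interprets $\T$ by $\Gamma_0$, and (ii) the verification that the witness precisification $X'$ is genuinely compatible with $S$, i.e. that adjoining $\tau_1,\tau_2$ does not meet $S^-$ — which hinges on the consistency of $S$ and on the $\tau_i$ not being negations. Both are short, but they are exactly where the choice of a disjunction of negated truth-tellers does its work.
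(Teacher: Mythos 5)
Your first half is correct and matches the paper in substance: the paper proves classical soundness of $S:=\bigcup_{\alpha\in\mrm{On}}\Gamma_\alpha$ by redoing the transfinite induction of Lemma \ref{lem:ttstarind}(1) inline, and your shortcut --- check only that $\Gamma_0$ is classically sound, then cite Lemma \ref{lem:ttstarind}(1) --- is legitimate and slightly cleaner; your base-case computation ($\#\tau_i\notin\Gamma_0$, hence $(\nat,\Gamma_0)\vDash\neg\tau_1\vee\neg\tau_2$) is exactly the paper's.

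The second half, however, contains a genuine gap: your verification that $X'=S\cup\{\#\tau_1,\#\tau_2\}$ is an admissible precisification rests on a misreading of $S^-$. In the definition of $\vDash_{\mrm{vb}}$ (Section \ref{supervaluations}), $S^-=\{\#\vphi\sth\#\neg\vphi\in S\}\cup\{n\sth\nat\vDash\neg\mrm{Sent}_{\lt}(\bar n)\}$: it is the set of sentences \emph{whose negations lie in} $S$, not the set of negations of members of $S$. Thus $S^-$ contains plenty of syntactically non-negated sentences --- for instance, $\#\neg(0{=}1)\in\mrm{True}_0\subseteq S$ puts the atomic sentence $\#(0{=}1)$ into $S^-$ --- so your inference ``$\tau_i$ is not a negation, hence $\#\tau_i\notin S^-$'' is a non sequitur. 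What admissibility of $X'$ actually requires is $\#\neg\tau_1\notin S$ and $\#\neg\tau_2\notin S$, which is precisely the fact the paper records (``$\#\neg\tau_1\notin\bigcup_{\alpha}\Gamma_\alpha$, $\#\neg\tau_2\notin\bigcup_{\alpha}\Gamma_\alpha$'') before producing the extension $Y$ containing both truth-tellers. Note that this fact is not free: classical soundness of $S$ alone does not rule out $\#\neg\tau_i\in S$, since a classically sound set may contain $\neg\tau_i$ provided it omits $\tau_i$, so your proposal never establishes it. One way to supply it: let $K$ be a consistent Strong Kleene fixed point in which $\tau_1$ is true and $\tau_2$ false; since $\#\tau_2\in K^-$, every admissible precisification of $K$ satisfies $\neg\tau_2$, so $\#(\neg\tau_1\vee\neg\tau_2)\in\mrm{VB}(K)$, and since Strong Kleene satisfaction entails $\mrm{VB}$-satisfaction (as in the proof of Proposition \ref{prop:ssk and vb}), iterating $\mrm{VB}$ over $K$ yields a consistent $\mrm{VB}$ fixed point $V\supseteq\Gamma_0$; by Lemma \ref{every fp of vb is a fp of thetastar}, $V=\Theta^*(V)$, so by monotonicity of $\Theta^*$ one gets $S\subseteq V$ by induction on the stages; finally $\#\tau_1\in V$ and the consistency of $V$ give $\#\neg\tau_1\notin V\supseteq S$, and symmetrically $\#\neg\tau_2\notin S$. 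With that lemma in hand, your witness $X'$ is admissible and the remainder of your argument coincides with the paper's.
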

\begin{proof}
We first show that $\bigcup_{\alpha\in\mrm{On}}\Gamma_\alpha$ is classically sound. We prove that $\#\vphi \in \Gamma_\alpha $ implies $ \Gamma_\alpha\vDash \vphi$ by transfinite induction on $\alpha$. 

For the base case: it is clear that $\Gamma_0\vDash \neg \tau_1\vee\neg \tau_2$, since $\neg\tau_i\equiv \neg \T\corn{\tau_i}$ and $\#\tau_i\notin \Gamma_0$ ($i$ one of $1,2$). 
For the successor case: we can assume that $\#\vphi \in \Gamma_\alpha $ implies $ \Gamma_\alpha\vDash \vphi$ and prove the claim for $\alpha+1$. If $\#\vphi \in\Gamma_{\alpha+1}$ by $\xi^*(x,X)$, the claim follows by an induction on the positive complexity of $\vphi$. It's worth noting that in the case in which $\vphi$ is of the form $\T \corn{\psi}$ we employ the increasing nature of $\Theta^*$, whereas if $\vphi$ is of the form $\neg\T \corn{\psi}$ we employ the consistency of $\Gamma_\alpha$ for any $\alpha$. 

On the other hand, if $\#\vphi\in \Gamma_{\alpha+1}$ because there is some $\psi$ such that $\xi^*(\corn{\psi}, \Gamma_\alpha)$ holds and $\psi\ra\vphi$ is provable in $\pat$, the previous argument shows $\Gamma_{\alpha+1}\vDash \psi$. But since this is a model of $\pat$, $\Gamma_{\alpha+1}\vDash \vphi$.


Now, we show that $\bigcup_{\alpha\in\mrm{On}}\Gamma_\alpha\neq \mrm{VB}(\bigcup_{\alpha\in\mrm{On}}\Gamma_\alpha)$. It is clear that $\#(\neg\tau_1\vee\neg\tau_2)\in \bigcup_{\alpha\in\mrm{On}}\Gamma_\alpha$ and $\#\neg\tau_1\notin \bigcup_{\alpha\in\mrm{On}}\Gamma_\alpha$, $\#\neg\tau_2\notin \bigcup_{\alpha\in\mrm{On}}\Gamma_\alpha$. But no consistent fixed point of VB can contain $\#(\neg\tau_1\vee\neg\tau_2)$ and fail to contain $\#\neg\tau_1$ or $\#\neg\tau_2$. Since $\#\neg\tau_1\notin \bigcup_{\alpha\in\mrm{On}}\Gamma_\alpha$ and $\#\neg\tau_2\notin \bigcup_{\alpha\in\mrm{On}}\Gamma_\alpha$, there are extensions $Y\supseteq \bigcup_{\alpha\in\mrm{On}}\Gamma_\alpha$ such that both $\#\tau_1\in Y$ and $\#\tau_2\in Y$, whence $(\nat, Y)\vDash \neg (\neg\tau_1\vee\neg\tau_2)$. But then, $\#(\neg\tau_1\vee\neg\tau_2)\notin\mrm{VB}(\bigcup_{\alpha\in\mrm{On}}\Gamma_\alpha)$, so $\bigcup_{\alpha\in\mrm{On}}\Gamma_\alpha\neq \mrm{VB}(\bigcup_{\alpha\in\mrm{On}}\Gamma_\alpha)$.
\end{proof}

\begin{prop}
$\{ S \sth S=\Theta^*(S) \; \& \; S \; \text{is classically sound}\} \neq \{ S\sth S=\mrm{VB}(S)\}$
\end{prop}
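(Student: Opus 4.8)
The plan is to derive the claim as a direct corollary of two results already secured, namely Lemma \ref{every fp of vb is a fp of thetastar} and Lemma \ref{lemma:example of different vb-theta fp}. Writing $\mc{Z}$ for the left-hand class $\{S\sth S=\Theta^*(S)\;\&\;S\text{ is classically sound}\}$, the strategy is to establish the proper inclusion $\{S\sth S=\mrm{VB}(S)\}\subsetneq\mc{Z}$; the asserted inequality of the two classes then follows at once.

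First I would record the inclusion $\{S\sth S=\mrm{VB}(S)\}\subseteq\mc{Z}$. This is precisely the content of Lemma \ref{every fp of vb is a fp of thetastar}: every $\mrm{VB}$ fixed point $S$ satisfies $S=\Theta^*(S)$ and is classically sound, and hence belongs to $\mc{Z}$. So one half of the set identity we are refuting does hold, and the whole question reduces to whether the inclusion is strict.

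Next I would exhibit a witness lying in $\mc{Z}\setminus\{S\sth S=\mrm{VB}(S)\}$. Here Lemma \ref{lemma:example of different vb-theta fp} does all the work: the set $S^\ast:=\bigcup_{\alpha\in\mrm{On}}\Gamma_\alpha$ obtained by iterating $\Theta^*$ over $\{\#(\neg\tau_1\vee\neg\tau_2)\}$ is a classically sound $\Theta^*$ fixed point, so $S^\ast\in\mc{Z}$, while $S^\ast\neq\mrm{VB}(S^\ast)$, so $S^\ast$ is not a $\mrm{VB}$ fixed point. Combining this with the previous paragraph yields $\{S\sth S=\mrm{VB}(S)\}\subsetneq\mc{Z}$, and in particular the two classes are distinct.

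No genuine obstacle remains at this point, since the substantive work --- verifying classical soundness of $S^\ast$ by transfinite induction on the construction, and checking that the presence of $\#(\neg\tau_1\vee\neg\tau_2)$ in $S^\ast$ forces $\mrm{VB}(S^\ast)$ to drop this sentence, because neither $\#\neg\tau_1$ nor $\#\neg\tau_2$ lies in $S^\ast$ --- was already carried out in Lemma \ref{lemma:example of different vb-theta fp}. The only point worth flagging is that the witness must be a genuine element of $\mc{Z}$ and not merely an arbitrary $\Theta^*$ fixed point: the example is engineered so that $S^\ast$ is \emph{additionally} classically sound, which is exactly what places it inside $\mc{Z}$ and makes the separation from the $\mrm{VB}$ fixed points meaningful. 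One could alternatively appeal to the complexity computation in Lemma \ref{different max intrin fps}, which already shows the two classes cannot coincide; but the explicit witness gives the sharper and more transparent conclusion.
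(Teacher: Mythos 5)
Your proof is correct. It differs from the paper's primary argument, which derives the proposition from Lemma \ref{different max intrin fps}: there, the separation is obtained non-constructively from recursion-theoretic complexity bounds (the $\mc{Z}$-maximal intrinsic fixed point is at most $\Sigma^1_1$-in-a-$\Pi^1_1$-parameter, whereas by Burgess's result the maximal intrinsic $\mrm{VB}$ fixed point is $\Delta^1_2$-in-a-$\Pi^1_2$-parameter), and then the proposition follows via a case analysis on maximal fixed points: given a $\mc{Z}$-maximal fixed point that is not $\mrm{VB}$-maximal (or vice versa), either it is itself not a $\mrm{VB}$ fixed point, or some proper extension of it is a $\mrm{VB}$ fixed point outside $\mc{Z}$. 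Your route instead uses the explicit witness of Lemma \ref{lemma:example of different vb-theta fp} directly: the classically sound $\Theta^*$ fixed point generated over $\{\#(\neg\tau_1\vee\neg\tau_2)\}$ lies in $\mc{Z}$ but is not a $\mrm{VB}$ fixed point, which settles the inequality in one step; the paper does gesture at this second lemma as an alternative, but still filters it through the maximality detour, so your direct use of the witness is cleaner. Combining it with Lemma \ref{every fp of vb is a fp of thetastar} is strictly unnecessary for the bare inequality, but it buys you the sharper conclusion $\{S\sth S=\mrm{VB}(S)\}\subsetneq\mc{Z}$, i.e., that $\mc{Z}$ properly extends the $\mrm{VB}$ fixed points rather than merely differing from them. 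What the paper's complexity argument buys in exchange is structural information your witness does not provide: it locates a difference already at the level of maximal (intrinsic) fixed points, independently of any particular truth-teller construction.
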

\begin{proof}
Lemma \ref{different max intrin fps} yields the result. Note that one can also use Lemma \ref{lemma:example of different vb-theta fp}, reasoning as follows. By that lemma, either (i) there is a $\mc{Z}$-maximal fixed point $X$ which is not a $\mrm{VB}$-maximal fixed point, or (ii) there is a $\mrm{VB}$-maximal fixed point $Y$ which is not a $\mc{Z}$-maximal fixed point. If (i): then either $X$ is not a fixed point of $\mrm{VB}$, in which case the claim follows; or there is some $X'\supseteq X$ which is a fixed point of $\mrm{VB}$, and cannot be a fixed point of $\mc{Z}$, since $X$ is $\mc{Z}$-maximal, whence the claim follows. An analogous reasoning applies if (ii) is the case. 
\end{proof}

The situation is very similar when we consider the operator $\Theta_\mrm{c}$ and the supervaluationist scheme VC. We omit the proofs of the following results, which are just reproductions of all proofs we have carried out for VB:

\begin{prop}\label{consprop}
The following hold:

\begin{enumerate}
    \item Let $S=\Theta_\mrm{c}^*(S)$. Then, $S=\Theta_\mrm{c}(S)$.
    \item $\mrm{I}_{\Theta_\mrm{c}}=\mrm{I}_{\Theta^*_\mrm{c}}=\mrm{I}_{\mrm{vc}}$.
    \item $\mrm{I}_{\mrm{ssk}_\mrm{c}}\subsetneq \mrm{I}_{\mrm{vc}}$
    \item  $S=\Theta^*_\mrm{c}(S) \; \& \; S \; \text{is classically sound} \; \Leftrightarrow S\vDash \indt_\mrm{c}$.
    \item Let $\mc{Z}_\mrm{c}=\{ S\sth \Theta^*_\mrm{c}(S) \; \& \; S \; \text{is classically sound}\}$. Then, $\mc{Z}_\mrm{c}\supseteq \{ S\sth S=\mrm{VC}(S)\}$.
    \item The $\mc{Z}_\mrm{c}$-maximal intrinsic fixed point and the maximal intrinsic fixed point of $\mrm{VC}$ differ.
    \item Let $Y_\mrm{c}$ be the $\Theta^*_\mrm{c}$ fixed point obtained by the construction of Lemma \ref{lemma:example of different vb-theta fp}, with $\Theta^*_\mrm{c}$ in place of $\Theta^*$. Then, $Y\in \mc{Z}_\mrm{c}$ but $Y\neq \mrm{VC}(Y)$.
\end{enumerate}

\end{prop}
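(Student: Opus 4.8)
The plan is to run, mutatis mutandis, the seven arguments already given for the pair $(\Theta^*,\mrm{VB})$, observing that $\Theta_\mrm{c}$ and $\Theta^*_\mrm{c}$ arise from $\Theta$ and $\Theta^*$ by adjoining the single disjunct $\mrm{con}(y)$, exactly as the scheme $\mrm{VC}$ arises from $\mrm{VB}$ by replacing the compatibility condition $X'\cap X^-=\varnothing$ with the consistency condition $\mrm{CON}(X')$. Parts (1), (3), (4), (6) and (7) then transfer almost verbatim: (1) is the $\mrm{con}$-augmented reading of Lemma \ref{prop:thetastar implies theta}; (3) repeats the argument of Proposition \ref{prop:ssk and vb}, using that Strong-Kleene satisfaction entails $\mrm{VC}$-satisfaction and that $\mrm{VC}$-truth is closed under $\pat$-consequence, together with the failure of $\omega$-closure from Proposition \ref{failure of omega logic}; (4) combines the $\nat$-categoricity of $\mrm{IT}^*_\mrm{c}$ with the equivalence ``classically sound $\Leftrightarrow$ satisfies $\T\text{-}\mrm{Out}$'', exactly as in the cases $1\Leftrightarrow 3$ and $1\Leftrightarrow 4$ of Proposition \ref{prop:theta-star_n-cat-it}; and (7) reproduces the construction of Lemma \ref{lemma:example of different vb-theta fp}, the same witness $\neg\tau_1\vee\neg\tau_2$ separating $\mc{Z}_\mrm{c}$ from the $\mrm{VC}$ fixed points.

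The one genuinely new ingredient, on which parts (2) and (5) turn, is the verification that the extra disjunct is harmless because consistency statements are $\mrm{VC}$-valid. Concretely, if $S$ is consistent and $\mrm{con}(\corn{\vphi})$ holds, so that $\vphi$ is $\neg(\T\subdot s\wedge\T\subdot\neg\subdot t)$ with $s^\circ=t^\circ$, then every consistent $X'\supseteq S$ satisfies $\vphi$ by $\mrm{CON}(X')$; hence $\#\vphi\in\mrm{VC}(S)$. With this in hand, the closure direction $\xi^*_\mrm{c}(\corn{\vphi},S)\Ra\#\vphi\in S$ needed for part (5) (the analogue of Lemma \ref{every fp of vb is a fp of thetastar}) goes through: the $\xi^*$-clauses are absorbed by $\omega$-closure and the $\T$-rules of $\mrm{VC}$ fixed points, the $\mrm{con}$-clause by the observation just made, and $\pat$-provability by closure under $\pat$-consequence; classical soundness of $\mrm{VC}$ fixed points is automatic. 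For part (2) I would first establish $\mrm{I}_{\Theta_\mrm{c}}=\mrm{I}_{\mrm{vc}}$ by the infinitary route used to show $\mrm{I}_\Theta=\mrm{I}_{\Theta^*}=\mrm{I}_\mrm{vb}$, invoking the $\mrm{VC}$-analogue of Cantini's calculus \cite{cantini_1990} (the system $\mrm{SV}_\infty$ augmented with consistency axioms), and then $\mrm{I}_{\Theta_\mrm{c}}=\mrm{I}_{\Theta^*_\mrm{c}}$ by minimality together with part (1), exactly as before.

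Part (6) is the complexity separation, and is where I expect the main work to lie. Since $\mrm{con}(y)$ is arithmetical, $\xi^*_\mrm{c}(y,X)$ remains arithmetical, so $X=\Theta^*_\mrm{c}(X)$ is still $\Delta^1_1$ and the bound computed in Lemma \ref{different max intrin fps} is unchanged: the $\mc{Z}_\mrm{c}$-maximal intrinsic fixed point is $\Sigma^1_1$-in-a-$\Pi^1_1$-parameter. The separation then requires the $\mrm{VC}$-analogue of Burgess's theorem \cite{burgess_1988}, placing the maximal intrinsic fixed point of $\mrm{VC}$ strictly above this bound; granting it, the two classes differ, and the `therefore' clause follows as in Lemma \ref{different max intrin fps} via the intersection characterization of maximal intrinsic fixed points. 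The main obstacle is thus external rather than conceptual: pinning down the precise Burgess complexity bound for $\mrm{VC}$, and supplying the consistency-augmented Cantini calculus needed for (2); both are routine modifications of known results but must be stated to license the omitted proofs.
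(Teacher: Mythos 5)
Your proposal matches the paper's own treatment: the paper omits these proofs entirely, stating only that they are ``just reproductions of all proofs we have carried out for VB,'' which is precisely the mutatis mutandis transfer you execute. Your added care---checking that $\mrm{con}$-sentences are $\mrm{VC}$-valid so the extra disjunct is absorbed in parts (2) and (5), and flagging the consistency-augmented Cantini calculus and the $\mrm{VC}$-analogue of Burgess's complexity bound as the external ingredients for (2) and (6)---is correct and in fact supplies more detail than the paper itself records.
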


In the case of MC and $\Theta^*_\mrm{mc}$, it must be noted that the scheme MC is not itself classically sound. Therefore, we can forget about that aspect when establishing their relations. Thus, the connection is even more direct than the case of $\Theta^*$ and $\Theta^*_\mrm{c}$, although the proofs do not change much:

\begin{prop}
The following holds:

\begin{enumerate}
    \item Let $S=\Theta_\mrm{mc}^*(S)$. Then, $S=\Theta_\mrm{mc}(S)$.
    \item $\mrm{I}_{\Theta_\mrm{mc}}=\mrm{I}_{\Theta^*_\mrm{mc}}=\mrm{I}_{\mrm{mc}}$.
     \item $\mrm{I}_{\mrm{ssk}_\mrm{mc}}\subsetneq \mrm{I}_{\mrm{mc}}$
    \item  $S=\Theta^*_\mrm{mc}(S) \Leftrightarrow S\vDash \indt_\mrm{mc}$.
    \item If $S=\mrm{MC}(S)$, then $S=\Theta^*_\mrm{mc}(S)$.
    \item The $\Theta^*_\mrm{mc}$-maximal intrinsic fixed point and the maximal intrinsic fixed point of $\mrm{MC}$ differ.
\end{enumerate}
\end{prop}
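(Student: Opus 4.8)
The plan is to read each of the six items as the exact $\mrm{MC}$-analogue of a result already proved for $\mrm{VB}$ in this section, and to re-run the corresponding argument with $\mrm{MC}$ in place of $\mrm{VB}$ and with $\xi^*(x,X)\vee\mrm{com}(x)$ in place of $\xi^*(x,X)$. The decisive simplification, flagged before the statement, is that $\mrm{MC}$ is \emph{not} classically sound, so the relevant class of fixed points is plainly $\{S\sth S=\Theta^*_\mrm{mc}(S)\}$, with no classical-soundness rider to track. The one genuinely new ingredient, shared by items (4) and (5), is the observation that every $\mrm{MC}$ fixed point $S$ internally validates the completeness biconditionals isolated by $\mrm{com}$, i.e.\ sentences of content $\neg\T\corn{\psi}\lra\T\corn{\neg\psi}$: since every admissible precisification of $S$ is maximally consistent, each such biconditional is $\mrm{MC}$-valid, whence it belongs to $\mrm{MC}(S)=S$.

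For the structural items: item (1) is immediate by inspecting the definitions, exactly as in Lemma \ref{prop:thetastar implies theta}. General soundness (the $\vphi\wedge\vphi\ra\vphi$ trick of Proposition \ref{theta is always sound}, which applies unchanged to $\Theta_\mrm{mc}$) gives $S\subseteq\Theta_\mrm{mc}(S)$, while the inclusion $\xi(x,X)\Ra\xi^*(x,X)$ gives $\Theta_\mrm{mc}(S)\subseteq\Theta^*_\mrm{mc}(S)=S$. Item (4) is nothing but the $\nat$-categoricity statement already recorded for the theory $\indt_\mrm{mc}$ ($=\mrm{IT}^*_\mrm{mc}$) in the Proposition following Theorem \ref{thm:n-cat_it_star}; because $\mrm{MC}$ is not classically sound, no $\T$-$\mrm{Out}$ and hence no classical-soundness condition enters, and the biconditional is precisely that result. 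Item (5) mirrors Lemma \ref{every fp of vb is a fp of thetastar}: $S\subseteq\Theta^*_\mrm{mc}(S)$ is again general soundness, and for $\Theta^*_\mrm{mc}(S)\subseteq S$ one shows $(\xi^*(\corn{\vphi},S)\vee\mrm{com}(\corn{\vphi}))\Ra\#\vphi\in S$ by induction on the positive complexity of $\vphi$, using that $\mrm{MC}$ fixed points are closed under $\omega$-logic, under $\T$- and $\neg\T$-introduction and elimination, and under $\pat$-consequence, together with the completeness observation above for the $\mrm{com}$-disjunct; closure under $\pat$-provability then delivers the inclusion.

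For the remaining items: item (2) follows the Corollary establishing $\mrm{I}_\Theta=\mrm{I}_{\Theta^*}=\mrm{I}_\mrm{vb}$. One exhibits an infinitary calculus for $\mrm{MC}$, the system $\itmcinf$ obtained from $\mrm{SV}_\infty$ by adjoining completeness rules, and shows both that $\#\vphi\in\mrm{I}_\mrm{mc}\Leftrightarrow\itmcinf\vdash\vphi$ (the $\mrm{MC}$-analogue of Cantini's theorem \cite{cantini_1990}) and that $\#\vphi\in\mrm{I}_{\Theta_\mrm{mc}}\Leftrightarrow\itmcinf\vdash\vphi$ (the $\mrm{MC}$-analogue of Proposition \ref{prop:itheta_models_it}); equality of the two minimal fixed points is then immediate, and $\mrm{I}_{\Theta_\mrm{mc}}=\mrm{I}_{\Theta^*_\mrm{mc}}$ follows from item (1) and minimality exactly as in that Corollary. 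Item (3) reproduces Proposition \ref{prop:ssk and vb}: the inclusion $\mrm{I}_{\mrm{ssk}_\mrm{mc}}\subseteq\mrm{I}_\mrm{mc}$ holds because Strong-Kleene satisfaction entails $\mrm{MC}$-satisfaction and $\mrm{MC}$-truth is closed under $\pat$-consequence, while properness follows from the failure of closure under $\omega$-logic in Proposition \ref{failure of omega logic} against the $\omega$-closure of $\mrm{MC}$ fixed points. Item (6) reproduces Lemma \ref{different max intrin fps} by complexity: since $\xi^*(x,X)\vee\mrm{com}(x)$ is arithmetical, $X=\Theta^*_\mrm{mc}(X)$ is $\Delta^1_1$ and the set of sentences true in some $\Theta^*_\mrm{mc}$ fixed point is $\Sigma^1_1$, so the $\Theta^*_\mrm{mc}$-maximal intrinsic fixed point is at most $\Sigma^1_1$-in-a-$\Pi^1_1$-parameter (cleaner here than in Lemma \ref{different max intrin fps}, with no classical-soundness conjunct), whereas Burgess's analysis \cite{burgess_1988} places the maximal intrinsic fixed point of $\mrm{MC}$ strictly higher; the two therefore differ.

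I expect item (2) to be the main obstacle: every other item is a mechanical transposition, but the minimal-fixed-point coincidence $\mrm{I}_{\Theta_\mrm{mc}}=\mrm{I}_\mrm{mc}$ rests on setting up $\itmcinf$ with the correct completeness rules and proving both its soundness and completeness relative to $\mrm{I}_\mrm{mc}$ and the membership-equals-provability correspondence relative to the $\Theta_\mrm{mc}$ construction. The delicate point is that, unlike in the $\mrm{VB}$ case, one cannot lean on classical soundness to control the calculus, so the admissibility of the completeness rules and the agreement of the least provable set with the inductively generated $\mrm{I}_{\Theta_\mrm{mc}}$ must be established entirely within the proof-theoretic characterization.
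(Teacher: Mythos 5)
Your proposal takes essentially the same approach as the paper: the paper in fact omits the proof of this proposition, remarking only that the arguments are the $\mrm{VB}$-case proofs transposed to $\mrm{MC}$ with the classical-soundness rider dropped (since $\mrm{MC}$ is not classically sound), which is precisely the transposition you execute item by item. Your two supplementary observations --- that $\mrm{MC}$ fixed points internally validate the $\mrm{com}$-biconditionals because every maximally consistent precisification decides each sentence, and that item (2) requires an $\mrm{MC}$-analogue $\itmcinf$ of $\mrm{SV}_\infty$ with completeness rules playing the role of Cantini's theorem and of Proposition \ref{prop:itheta_models_it} --- correctly identify exactly the details the paper leaves implicit.
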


Yet something makes the case of $\Theta^*_\mrm{mc}$ particularly different: the example given in Lemma \ref{lemma:example of different vb-theta fp} does not apply here. The reason is simple: when one considers \textit{maximally consistent} extensions of a set $Y$ containing $\#(\neg\tau_1\vee\neg\tau_2)$, the maximal consistency requirement implies that at least one of the disjuncts ($\#\neg\tau_1$ or $\#\neg\tau_2$) must be in the extension as well. 

This being said, here are a few more things we can uncover about where MC and $\Theta^*_\mrm{mc}$ come apart. As we did before with $\Theta^*$ and $\Theta^*_\mrm{c}$, we display a concrete counterexample of a fixed point of $\Theta^*_\mrm{mc}$ which is not a fixed point of MC.

\begin{prop}\label{prop:example_mc_thetamc}
Let $\lambda_1$, $\lambda_2$ be two Liar sentences, i.e., $\pat\vdash \lambda_i\lra \neg\T\corn{\lambda_i}$ (for $i\in\{1,2\}$). Consider the following construction. 

\begin{align*}
    &\Gamma_0:=\{\#(\lambda_1\lra\lambda_2)\}\\
    &\Gamma_{\alpha+1}:=\Theta^*_\mrm{mc}(\Gamma_\alpha)\\
    & \Gamma_\lambda:=\bigcup_{\beta<\lambda}\Gamma_\beta
\end{align*}
Then, $\bigcup_{\beta\in\mrm{On}}\Gamma_\beta\neq \mrm{MC}(\bigcup_{\beta\in\mrm{On}}\Gamma_\beta)$.
\end{prop}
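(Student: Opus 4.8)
Write $S:=\bigcup_{\beta\in\mrm{On}}\Gamma_\beta$. Since $\Theta^*_{\mrm{mc}}$ is monotone and every set is sound with respect to it (by the $\varphi\wedge\varphi$ argument used for $\Theta^*$), the chain $(\Gamma_\alpha)$ is increasing and $S$ is the least $\Theta^*_{\mrm{mc}}$-fixed point extending $\Gamma_0$; in particular $\#(\lambda_1\lra\lambda_2)\in\Gamma_0\subseteq S$. The plan is to show $\#(\lambda_1\lra\lambda_2)\notin\mrm{MC}(S)$, which already gives $S\neq\mrm{MC}(S)$. First I would record the translation of the target into membership facts: since $\pat\vdash\lambda_i\lra\neg\T\corn{\lambda_i}$ and $(\nat,X')\vDash\pat$ for every interpretation $X'$, we have $(\nat,X')\vDash\lambda_1\lra\lambda_2$ iff $(\#\lambda_1\in X'\Leftrightarrow\#\lambda_2\in X')$. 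Hence it is enough to produce a maximally consistent $X'\supseteq S$ (one with $\mrm{MXC}(X')$) for which $\#\lambda_1\notin X'$ and $\#\lambda_2\in X'$, because such an $X'$ falsifies $\lambda_1\lra\lambda_2$.

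The whole argument then rests on the following Key Lemma: $S$ is consistent and none of $\#\lambda_1,\#\neg\lambda_1,\#\lambda_2,\#\neg\lambda_2$ lies in $S$. Granting it, $S\cup\{\#\neg\lambda_1,\#\lambda_2\}$ is consistent (adding $\neg\lambda_1$ is harmless because $\#\lambda_1\notin S$, adding $\lambda_2$ because $\#\neg\lambda_2\notin S$), and I would extend it to a maximally consistent $X'$ by the purely syntactic Lindenbaum procedure: $\mrm{MXC}$ only requires consistency together with completeness, not deductive closure, so one simply runs through all sentences inserting $\chi$ or $\neg\chi$ without ever creating a clash. This $X'$ witnesses the previous paragraph, so $\#(\lambda_1\lra\lambda_2)\notin\mrm{MC}(S)$ while $\#(\lambda_1\lra\lambda_2)\in S$.

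The Key Lemma is where the real work lies, and I would prove it by transfinite induction on $\alpha$, carrying the hypothesis that $\Gamma_\alpha$ is consistent and decides no member of the Liar cluster ($\lambda_i,\neg\lambda_i$ and their truth-ascriptions). At a successor one inspects the three ways $\#\varphi$ can enter $\Theta^*_{\mrm{mc}}(\Gamma_\alpha)$ -- through a clause of $\xi^*$ (Strong-Kleene composition or $\T$-/$\neg\T$-downward closure), through a completeness instance $\mrm{com}$, and then through a $\pat$-provable implication -- and checks that none of them yields $\lambda_i$, $\neg\lambda_i$, or a clashing pair. Equivalently, the construction closes the seed $\lambda_1\lra\lambda_2$ together with all completeness biconditionals under $\pat$-provability and the Strong-Kleene/$\T$ rules, and the point to establish is that this closure neither decides either Liar nor becomes inconsistent.

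The hard part is exactly this, and it is the precise point at which the proof of Lemma \ref{lemma:example of different vb-theta fp} cannot be transplanted. There the set $\bigcup\Gamma_\alpha$ was shown to be classically sound and the conclusion dropped out; here classical soundness fails, since the $\mrm{com}$-clauses inject instances of excluded middle (e.g.\ $\T\corn{\lambda_1}\vee\T\corn{\neg\lambda_1}$) that no gappy fixed point validates classically. Consequently there is no classical, Strong-Kleene, or supervaluational model that at once designates $\lambda_1\lra\lambda_2$, designates every completeness biconditional, validates $\T$-introduction, and leaves both Liars undecided; so the non-decision of $\lambda_1,\lambda_2$ must be extracted from the fixed-point construction itself rather than from an external countermodel. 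This is also the conceptual reason the biconditional seed succeeds where the disjunctive seed $\neg\tau_1\vee\neg\tau_2$ of Lemma \ref{lemma:example of different vb-theta fp} does not: a maximally consistent completion of $S$ can falsify a biconditional linking two independent ungrounded sentences, whereas it is forced to verify that disjunction.
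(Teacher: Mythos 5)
Your skeleton (show $\#(\lambda_1\lra\lambda_2)\in S$ while exhibiting a maximally consistent $X'\supseteq S$ that decides the two Liars asymmetrically, so that $\#(\lambda_1\lra\lambda_2)\notin\mrm{MC}(S)$) is sound as far as it goes, and two of your auxiliary observations are correct: the translation of $(\nat,X')\vDash\lambda_1\lra\lambda_2$ into the membership condition $\#\lambda_1\in X'\Leftrightarrow\#\lambda_2\in X'$, and the point that $\mrm{MXC}$ demands only pairwise consistency plus completeness, so a purely syntactic Lindenbaum extension suffices. But the entire argument hangs on your Key Lemma --- that $S$ is consistent and contains none of $\#\lambda_1,\#\neg\lambda_1,\#\lambda_2,\#\neg\lambda_2$ --- and you do not prove it. You sketch a transfinite induction and then concede, rightly, that the needed invariant cannot be certified by any classical, Strong Kleene, or supervaluational countermodel, because the $\mrm{com}$-instances (e.g.\ $\neg\T\corn{\lambda_1}\lra\T\corn{\neg\lambda_1}$) are classically unsound on any set leaving the Liars undecided; that is precisely why the soundness argument of Lemma \ref{lemma:example of different vb-theta fp} does not transplant. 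So the step you label ``the hard part'' is not a deferrable verification: it is the proposition, and as written your proposal establishes nothing beyond $\#(\lambda_1\lra\lambda_2)\in S$.

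The missing idea --- and the paper's actual route --- is that no facts about whether \emph{this particular} $S$ decides the Liars are needed, because the argument can be run as a reductio in which those facts come for free from the fixed-point hypothesis. Suppose $S=\mrm{MC}(S)$. If $\#\lambda_i\in S$, then every maximally consistent $X'\supseteq S$ would have to satisfy $\lambda_i$, i.e.\ omit $\#\lambda_i$, contradicting $X'\supseteq S$; dually, $\#\neg\lambda_i\in S$ would force every such $X'$ to contain $\#\lambda_i$, clashing with $\#\neg\lambda_i\in X'$. So an $\mrm{MC}$ fixed point decides neither Liar, and one may extend $S\cup\{\#\lambda_1,\#\neg\lambda_2\}$ to a maximally consistent $X'$ falsifying $\lambda_1\lra\lambda_2$; hence $\#(\lambda_1\lra\lambda_2)\notin\mrm{MC}(S)=S$, contradicting $\Gamma_0\subseteq S$. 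The only construction-specific input is that a consistent seed generates a consistent $\Theta^*_\mrm{mc}$ fixed point --- an appeal the paper makes by citation, and which you would need anyway for the consistency half of your Key Lemma. In short: replace your unproven lemma about $S$ by the corresponding (easy) lemma about arbitrary $\mrm{MC}$ fixed points, and your argument closes; your closing paragraph contrasting the biconditional seed with the disjunctive seed of Lemma \ref{lemma:example of different vb-theta fp} is a correct and genuinely illuminating observation (it matches the paper's own remark), but it is commentary, not a substitute for that step.
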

\begin{proof}
First, it must be noted that $\bigcup_{\beta\in\mrm{On}}\Gamma_\beta$ is indeed a consistent $\Theta^*_\mrm{mc}$ fixed point: we know that every consistent set of sentences (as is $\{\#(\lambda_1\lra\lambda_2)\}$) generates a consistent $\Theta^*_\mrm{mc}$ fixed point. Moreover, as before, $\{\#(\lambda_1\lra\lambda_2)\}\subseteq \bigcup_{\beta\in\mrm{On}}\Gamma_\beta$. Secondly, we check that no fixed point of MC can contain $\#(\lambda_1\lra\lambda_2)$. The reasoning is as follows. It is easy to verify that no fixed point can contain either $\lambda_1$ or $\lambda_2$. But then, if $X$ were a fixed point containing $\#(\lambda_1\lra\lambda_2)$, there would be, e.g., a (maximally-consistent) extension $X'\supseteq X$ for which $\#\lambda_1\in X'$ and $\lambda_2\notin X'$. Then, $(\nat, X')\nvDash \lambda_1\lra\lambda$, so $\mrm{MC}(X)\neq X$. 
\end{proof}

\begin{corollary}
There is a $\Pi^1_1$ fixed point of $\Theta^*_\mrm{mc}$ which is not a fixed point of $\mrm{MC}$.
\end{corollary}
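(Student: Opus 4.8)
The plan is to take the very fixed point exhibited in Proposition \ref{prop:example_mc_thetamc}---write $S:=\bigcup_{\beta\in\mrm{On}}\Gamma_\beta$ for the result of iterating $\Theta^*_\mrm{mc}$ over the recursive seed $\{\#(\lambda_1\lra\lambda_2)\}$---and to verify that it has complexity $\Pi^1_1$. Proposition \ref{prop:example_mc_thetamc} already delivers that $S$ is a consistent $\Theta^*_\mrm{mc}$ fixed point with $S\neq\mrm{MC}(S)$, so the only thing left to establish is the complexity bound; the counterexample to MC-closure is inherited for free.

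First I would record that $\Theta^*_\mrm{mc}$ is a \emph{positive arithmetical} operator. Inspecting $\xi^*(x,X)$ clause by clause, every occurrence of the set variable $X$ is positive, and neither $\mrm{com}(y)$ nor the provability predicate $\mrm{Pr}_{\pat}$ mentions $X$; hence the matrix $\exists y((\xi^*(y,X)\vee\mrm{com}(y))\wedge\mrm{Pr}_{\pat}(y\subdot\ra x))$ defining $\Theta^*_\mrm{mc}$ is arithmetical and monotone (positive) in $X$. Next I would connect the transfinite iteration to a genuine least fixed point. Since every set is $\Theta^*_\mrm{mc}$-sound (by the same $\vphi\subdot\wedge\vphi\ra\vphi$ argument used for $\Theta$ and $\Theta^*$, noting that $\xi^*$ retains the conjunction clause), the seed satisfies $\{\#(\lambda_1\lra\lambda_2)\}\subseteq\Theta^*_\mrm{mc}(\{\#(\lambda_1\lra\lambda_2)\})$, so the construction $\Gamma_0,\Gamma_{\alpha+1},\Gamma_\lambda$ is increasing and $S$ is the least $\Theta^*_\mrm{mc}$ fixed point containing the seed. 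Folding the recursive seed into the operator---i.e.\ passing to $\Phi(X):=\Theta^*_\mrm{mc}(X)\cup\{\#(\lambda_1\lra\lambda_2)\}$, which is still positive arithmetical---one checks that the least fixed point of $\Phi$ coincides with $S$: here soundness of the seed guarantees that the least $\Phi$-closed set $Y$ satisfies $\{\#(\lambda_1\lra\lambda_2)\}\subseteq\Theta^*_\mrm{mc}(Y)$, so $\Phi(Y)=\Theta^*_\mrm{mc}(Y)$ and $Y$ is a genuine $\Theta^*_\mrm{mc}$ fixed point. By the standard theory of positive inductive definitions (see \cite[Ch.1]{moschovakis_1974}) the least fixed point of a positive arithmetical operator is $\Pi^1_1$; concretely,
\begin{equation*}
n\in S \;\Leftrightarrow\; \forall X\big(\forall m(m\in\Phi(X)\ra m\in X)\ra n\in X\big),
\end{equation*}
whose matrix is arithmetical in $X$, so the whole is $\Pi^1_1$. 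This produces a $\Pi^1_1$ fixed point of $\Theta^*_\mrm{mc}$ which, by Proposition \ref{prop:example_mc_thetamc}, is not a fixed point of $\mrm{MC}$.

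The content here is almost entirely bookkeeping rather than a genuine obstacle: the one point demanding care is the identification of the iterative construction with the least fixed point of the augmented operator $\Phi$, which relies essentially on the $\Theta^*_\mrm{mc}$-soundness of every set. Without that property the least $\Phi$-closed set need not be a $\Theta^*_\mrm{mc}$ fixed point, and the $\Pi^1_1$ computation for $\Phi$ would not transfer to the construction $S$ from Proposition \ref{prop:example_mc_thetamc}.
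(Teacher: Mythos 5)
Your proposal is correct and follows essentially the paper's own argument: the paper likewise takes the fixed point of Proposition \ref{prop:example_mc_thetamc} and observes that, since the seed $\{\#(\lambda_1\lra\lambda_2)\}$ is an arithmetical (positive) parameter in a positive arithmetical inductive definition, the resulting least fixed point is $\Pi^1_1$. You merely make explicit the bookkeeping the paper leaves implicit — the monotonicity/soundness facts and the identification of the transfinite iteration with the least fixed point of the seed-augmented operator $\Phi$ — and that elaboration is accurate.
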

\begin{proof}
The fixed point constructed in the proof of Proposition \ref{prop:example_mc_thetamc} suffices. The (positive) parameter in the induction, $\{\#(\lambda_1\lra\lambda_2)\}$, is arithmetical. Hence, the fixed point can be described by a $\Pi^1_1$-formula. 
\end{proof}

\begin{remark}
The construction:

\begin{align*}
    &\Gamma_0:=\{\#(\lambda_1\lra\lambda_2)\}\\
    &\Gamma_{\alpha+1}:=\Theta^*_\mrm{e}(\Gamma_\alpha)\\
    & \Gamma_\lambda:=\bigcup_{\beta<\lambda}\Gamma_\beta,
\end{align*}
\noindent for $\Theta^*_\mrm{e}$ one of $\Theta^*, \Theta^*_\mrm{c}$, also yields a fixed point of the corresponding $\Theta^*$ operator which isn't a fixed point of VB or VC, respectively. This observation draws on \cite{burgess_1986}, who already proved that no supervaluationist fixed point can contain the code of the sentence  $\#(\lambda_1\lra\lambda_2)$.
\end{remark}

\section{Conclusion}\label{sec:conc}

\noindent In the introductory Section \ref{sec:intro}, we have claimed that the \textsc{SSK thesis} can be vindicated, albeit in a less uniform manner than was envisaged by \cite{stern_2018}. In the light of the results just presented, we now explain in more detail what the vindication and the non-uniformity consists in, and sketch their philosophical import. 


We can say that the \textsc{SSK thesis} has been vindicated on the following grounds. First, all the operators we have considered -- $\mrm{SSK}$, $\Theta$, and $\Theta^*$ -- give rise to a fixed-point semantics that accounts for all first-order penumbral truths of classical logic and arithmetic. That is, they preserve what in Section \ref{sec:intro} we dubbed the \emph{essential aspects} of supervaluational truth. Second, the fixed points in question have a considerably lower recursion-theoretic complexity, and are therefore amenable to $\nat$-categorical axiomatizations, as we have shown. {Moreover, the $\mrm{SSK}$-fixed points are true to the semantic motivation supporting the supervaluation schemes we discussed in Section \ref{sec:intro}: the $\mrm{SSK}$-scheme collects the first-order logical consequences of Strong Kleene truths. In contrast, the $\Theta$- and $\Theta^*$-fixed points merely collect consequences of a given hypothesis. Yet, the flipside is that attractive $\Theta$ ($\Theta^*$)-fixed points are fixed points of supervaluational truth, as they actually collect consequences in $\omega$-logic. $\mrm{SSK}$-fixed points are not closed under $\omega$-logic and will thus typically fall short of supervaluational truth, that is, $\mrm{SSK}$-fixed points are usually not supervaluational fixed points.}\footnote{We do not know whether there are $\mrm{SSK}$-fixed points that are also supervaluational fixed points, and whether there is a nice class of such fixed points. We conjecture that this is not the case.}


 We see that each of these incarnations of supervaluational-style truth has its own strengths and weaknesses. We now analyze them in turn along key dimensions: 

 \subsection*{Principles of Truth.} Although the fixed-point models of $\mrm{SSK}$, $\Theta$, and $\Theta^*$ have all $\nat$-categorical axiomatizations, this does not entail that all such axiom systems are equally natural. 
 
 The principles of truth for $\mrm{IT}^-$, $\mrm{IT}^*$ and $\mrm{IT}^*+\T\text{-}\mrm{Out}$ preserve the desirable features of the the theory $\mrm{IT}$ discussed in \cite{stern_2018}. In particular. they are arguably as compositional as supervaluational-style truth can be. In fact, following Stern's characterization of $\mrm{IT}$ in \cite[p.~841]{stern_2018}, they have the advantage that, when full compositionality fails, we can locate where it does and the extent of this failure by means of a first-order formula -- $\xi(x,X)$ for $\mrm{IT}^-$, $\xi^*(x,X)$ for $\mrm{IT}^*$ and $\mrm{IT}^*+\T\text{-}\mrm{Out}$. But even between these theories 
 important differences arise. As we have seen, the lack of a right-to-left direction in axioms $\mrm{IT}^-6$ and $\mrm{IT}^-7$, which is remedied in $\mrm{IT}^*$ and $\mrm{IT}^*+\T\text{-}\mrm{Out}$, makes the latter more natural than $\mrm{IT}^-$: unlike $\mrm{IT}^-$, $\mrm{IT}^*$ and $\mrm{IT}^*+\T\text{-}\mrm{Out}$ allow for full truth disquotation inside the truth predicate. In addition, $\mrm{IT}^*+\T\text{-}\mrm{Out}$ stands out for its proof-theoretic strength, which matches the one of $\mrm{IT}$. They amount to the strongest, $\nat$-categorical axiomatic theories of truth available. 

The truth predicate of the $\mrm{IT}$-theories commutes with the universal quantifier, in contrast to the truth predicate of the theories of $\mrm{PK}$ and $\mrm{PK}^+$, as the $\Theta$-fixed points, unlike the $\mrm{SSK}$-fixed points, are closed under the $\omega$-rule. While there are good reasons why the $\mrm{SSK}$-fixed points are not closed under $\omega$-logic, it means that the theories $\mrm{PK}$ and $\mrm{PK}^+$ cannot give a satisfactory semantic explanation for the truth of universally quantified sentences. Putting universally quantified sentences aside, $\mrm{PK}$ and $\mrm{PK}^+$ preserve the desirable features of theories of supervaluation-style truth as highlighted by Lemma \ref{PK-theorem}:
\begin{itemize}
\item they allow for full disquotation inside the truth predicate;
\item they allow for double negation introduction/elimination inside the truth predicate
\item they commute with conjunction;
\item they locate the failure of compositionality by means of the formula $\theta(x)$ (for disjunctions, existential sentences and universal sentences).
\end{itemize}
Moreover, from the semantic perspective $\mrm{PK}$ and $\mrm{PK}^+$ are more successful in limiting the failure of compositionality since  $\T\corn{\neg\tau_1\vee\neg\tau_2}$ will be true in the models of $\mrm{PK}$ and $\mrm{PK}^+$ only if either $\T\corn{\neg\tau_1}$ or $\T\corn{\neg\tau_2}$ (cf.~Lemma \ref{lemma:example of different vb-theta fp}) is true in the model.

However, the theory $\mrm{PK}$ assumes an, arguably, fairly {\it ad hoc} axiom, $\mrm{TB}\pi$, which seems difficult to motivate. The axiom is required to guarantee that we can express $S\vDash_\mrm{sk}\varphi$ over the standard model. Introducing $\mrm{TB}\pi$ enables us to define this notion by a positive inductive definition (relative to the standard model). Without this axiom we would not be able to show the $\nat$-categoricity of $\mrm{PK}$ and the theory would not behave in the intended way. 
$\mrm{PK}^+$ remedies this by allowing disquotation for all $\T$-positive sentences. Arguably, this is a more natural restriction to the naive $\T$-schema, as it is justified by the role played by negation (or equivalent logical tools) in the Liar paradox. In addition, $\pk^+$ features the axiom schema of $\T$-Out, which naturally follows from the classical soundness of $\mrm{SSK}$-fixed points.




\subsection*{Semantic Rationale.}
 We have already mentioned that the $\mrm{SSK}$ fixed-point semantics comes equipped with semantico-philosophical rationale. For one, the fixed-point semantics is associated with a semantic evaluation scheme, the $\mrm{SSK}$-scheme, which, arguably, captures directly the core idea of `first-orderizing' the `second-order' supervaluation schema. For another, but for related reasons, the $\mrm{SSK}$-fixed points are submitted to semantic scrutiny: every sentence in a given  $\mrm{SSK}$-fixed point $S$ must be true in the model $(\nat,S)$ according to the $\mrm{SSK}$-scheme, that is, $\mrm{SSK}$-fixed points are semantically self-supporting. Since the $\mrm{SSK}$-scheme is classically sound this also implies that the members of $S$ are classically true in the model $(\nat,S)$.
 
 Neither holds for the fixed-point semantics associated with the operations $\Theta$ or $\Theta^*$: there is no motivated semantic evaluation scheme associated with these fixed-point semantics, nor are the fixed points subjected to any form of semantic scrutiny. The latter shows in the fact that $\Theta$ and $\Theta^*$ are sound for any given hypothesis $S\in\mathcal{P}(\mrm{Sent})$, that is, starting from any set of sentences $S$ we can inductively define a fixed point $\mrm{I}_S\supseteq S$ of $\Theta$ and $\Theta^*$ respectively. This means that the sentences in the starting hypothesis are accepted as true without scrutiny independently of their semantic status. Unsurprisingly then, not all fixed points of $\Theta$ and $\Theta^*$ are classically sound -- and, even if they are classically sound, they may contain, e.g., disjunctions that are neither supported by the Strong Kleene scheme nor $\mrm{PAT}$-truths (see the example of Lemma \ref{lemma:example of different vb-theta fp} discussed above). So, generally speaking, the $\Theta$ and $\Theta^*$ fixed points lack semantic justification. Rather, these fixed points can be conceived of as the set of theorems derivable within the proof system outlined in Proposition \ref{prop:itheta_models_it} with $S$ as additional axioms. Ultimately, this means that, from a semantic perspective, the fixed points of $\Theta$ and $\Theta^*$ will be well supported and motivated if(f) the initial hypothesis is. However, motivating a particular hypothesis is external to the fixed-point semantics of $\Theta$ and $\Theta^*$. Yet, if such a motivation for an initial hypothesis $S$ is given, e.g., that it is supervaluationally self-supporting, then this motivation is indeed fully transferred to the fixed point $\mrm{I}_S$ constructed over $S$: $\mrm{I}_S$ will be a supervaluational fixed point. Indeed, as we showed in Proposition \ref{lemma:vb_sound_least_theta}, all supervaluational fixed points can be constructed using $\Theta$ and $\Theta^*$ in such a way.

\subsection*{Two ways of first-orderizing}
Summing up, the $\mrm{SSK}$- and $\Theta$-operators can be considered as two different ways of `first-orderizing' the supervaluation scheme. The $\mrm{SSK}$-operator and scheme provide us with a first-order formulation of the semantic idea underlying the supervaluation scheme. In contrast to the supervaluation scheme, it {\it only} collects the first-order penumbral truths and falls short of collecting second-order penumbral truths. This also explains why the $\mrm{SSK}$-fixed points are not closed under the $\omega$-rule. The $\mrm{SSK}$-scheme embraces the idea of the supervaluation scheme but implements it in an independent way that leads to a distinct (and disjoint) class of supervaluation-style fixed points and a novel supervaluation-style theory.

In contrast, the various $\Theta$-operators `first-orderize' the supervaluational fixed-point construction, as these operators are defined via a first-order arithmetical formula. We can then use the first-order construction to extract axiomatic truth theories that are as close as possible to a proof-theoretic characterization of supervaluational truth (cf.~Proposition \ref{prop:theta-star_n-cat-it}). Yet, in contrast to the $\mrm{SSK}$-scheme, there is no semantics in the ordinary sense associated with these operators, and the fixed-point construction does not lend any semantic justification to the sentences in the fixed points. The fixed points of the various $\Theta$-operators are somewhat `hit-and-miss', depending on the choice of the initial hypothesis. The moral of our discussion then is that we can first-orderize supervaluational truth either by giving a first-order formulation of the underlying semantic idea or by first-orderizing the fixed-point construction. However, these are two different projects that, unfortunately, cannot be performed in an integrated and uniform manner.

\section{Open problems}

After our investigation, there remain some open problems that may be addressed in future research. Some of them concern the proof-theoretic strength of many of the theories that we have introduced here.

\begin{op}
What is the proof-theoretic strength of $\pk$? And of $\pk^+$?
\end{op}

By definition, we know that $\pk^+$ has at least the proof-theoretic strength of $\mrm{PUTB}$, but it is still open whether this lower-bound is precise. By contrast, both the lower- and the upper-bound of $\pk$ are still undetermined.

\begin{op}
What is the proof-theoretic strength of $\mrm{IT}^-$? And $\mrm{IT}^*$?
\end{op}

Proposition \ref{prop:lower bound itminus} established a lower-bound for both $\mrm{IT}^-$ and $\mrm{IT}^*$, namely the strength of $\mrm{RT}_{<\varepsilon_0}$. As with $\pk^+$, it remains unknown whether this bound is exact; while the strength of $\mrm{IT}$ represents an upper-bound, it is clearly not sharp.

A further question is related to $\mrm{SSK}$-fixed points. We have shown how some said fixed points are not closed under $\omega$-logic, e.g., the least $\mrm{SSK}$ fixed point. But we lack a general proof to the effect that no $\mrm{SSK}$ fixed point is closed under $\omega$-logic: 

\begin{op}
Are there $\mrm{SSK}$-fixed points which are closed under $\omega$-logic? And if so, is there a general procedure for obtaining these fixed points?
\end{op}

If, unlike we conjecture, the answer to the first question is in the affirmative, it would still be open whether any such fixed point coincides with a supervaluational fixed point:

\begin{op}
Is there a class of $\mrm{SSK}$-fixed points that are also supervaluational fixed points?
\end{op}

\bigskip
\subsection*{Acknowledgements} Pablo Dopico's work was supported by a LAHP (London Arts and Humanities Partnership) studentship.  Carlo Nicolai's research was supported by the Arts and Humanities Research Council (UK), grants AH/V015516/1 and AH/Z506515/1. Both Pablo Dopico and Carlo Nicolai acknowledge support from the European Union’s Horizon research and innovation programme within the project PLEXUS (Grant agreement No 101086295).

\bibliography{biblio1}
\bibliographystyle{alpha}
\end{document}